\author{Yuki Irie\thanks{Yuki Irie \newline Research Alliance Center for Mathematical Sciences, Tohoku University \newline yirie@tohoku.ac.jp}\ \thanks{This work was partially carried out at Chiba University.}}
\date{}
\title{Mixed-Radix Nim}
\begin{document}

\maketitle
We present take-away games whose Sprague-Grundy functions are given by the Nim sum of heap sizes
in a mixed base \(\beta\). Let \(\Delta_\beta\) be the set of such games.
We give a necessary and sufficient condition for the existence of a minimum of \(\Delta_\beta\),
and a recursive construction of the maximum of \(\Delta_\beta\).

\section{Introduction}
\label{sec:orgheadline30}
\comment{Intro}
\label{sec:orgheadline1}
Nim is a game played with heaps of coins.
Sprague \cite{sprague-uber-1935} and Grundy \cite{grundy-mathematics-1939} 
independently found that every (short impartial) game can be solved using a nonnegative integer-valued function,
which is called the \emph{Sprague-Grundy function} of the game.
They also proved that the Sprague-Grundy function of Nim is
\[
 \nsigma[2]((x^0, \ldots, x^{m - 1})) = x^0 \oplus_2 \cdots \oplus_2 x^{m - 1},
\]
where \(x^i\) is the size of the \(i\)th heap and \(\oplus_2\) is addition without carry in base 2.

Roughly speaking, the questions of this paper are as follows:
\begin{enumerate}
\item Given a function \(\phi\), is there a nice game with Sprague-Grundy function \(\phi\)?
\item If so, can we characterize such games?
\end{enumerate}
We will address these questions in the case when \(\phi\) is a generalization of \(\nsigma[2]\).

Some generalizations of \(\nsigma[2]\) are known.
For an integer \(b\) greater than 1,
Flanigan \cite{flanigan-Nim-1980} presented a take-away game
whose Sprague-Grundy function is 
\[
  \nsigma[b]((x^0, \ldots, x^{m - 1})) = x^0 \oplus_b \cdots \oplus_b x^{m - 1},
\]
where \(\oplus_b\) is addition without carry in base \(b\).
Fraenkel and Lorberbom \cite{fraenkel-Nimhoff-1991} constructed
take-away games whose Sprague-Grundy functions are
\[
  \nsigma[[b]]((x^0, \ldots, x^{m - 1})) = b \left( \left \lfloor \frac{x^0}{b} \right \rfloor \oplus_2 \cdots \oplus_2 \left \lfloor \frac{x^{m - 1}}{b} \right \rfloor \right) + ((x^0 + \cdots + x^{m - 1}) \bmod b),
\]
where \(n \bmod b\) is the nonnegative remainder of \(n\) divided by \(b\). 
They also presented the \emph{minimum} of take-away games with Sprague-Grundy function \(\nsigma[[b]]\). 
Blass, Fraenkel, and Guelman \cite{blass-how-1998}  determined
the \emph{maximum} of take-away games with Sprague-Grundy function \(\nsigma[2]\).

In this paper, we generalize the above results by using mixed-radix number systems.
For a mixed base \(\ybs\) (see Subsection \ref{orgtarget1}),
we present take-away games whose Sprague-Grundy functions are 
\[
  \nsigma[\ybs]((x^0, \ldots, x^{m - 1})) = x^0 \oplus_{\ybs} \cdots \oplus_{\ybs} x^{m - 1},
\]
where \(\oplus_{\ybs}\) is addition without carry in base \(\ybs\) (Theorem \ref{orgtarget2}).
In general, \emph{minimal} elements of such games may be not unique.
We prove that there is a minimum of such games if and only if \(\nsigma[\ybs] = \nsigma[[b]]\) for some \(b\) (Theorem \ref{orgtarget3}).
We also give a recursive construction of the maximum of such games (Theorem \ref{orgtarget4}).

\subsection{Take-away games and Sprague-Grundy systems}
\label{sec:orgheadline8}
To describe our results precisely, we introduce some notation.

We first define take-away games as the following digraphs.
Let \(\cP\) be a subset \footnote{The set \(\cP\) is \(\NN^m\) except for Examples \ref{orgtarget5} and \ref{orgtarget6}.} of \(\NN^m\), where \(\NN\) is the set of nonnegative integers and \(m \in \NN\).
For a subset \(\sgC\) of \(\NN^m \setminus \set{(0,\ldots,0)}\), 
let \(\Gamma(\cP, \sgC)\) be the digraph with vertex set \(\cP\) and edge set 
\[
 \set{(X, Y) \in \cP^2 : X - Y \in \sgC}.
\]
Then \(\Gamma(\cP, \sgC)\) is called a \emph{take-away game}. 

 \begin{example}[Nim]
 \comment{Exm. [Nim]}
\label{sec:orgheadline2}
\label{orgtarget7}
Let \(\cP = \NN^m\).
Consider
\begin{equation}
\label{orgtarget8}
 \sgCord = \Set{\ccC \in \NN^m : \wt(C) = 1},
\end{equation}
where \(\wt(C)\) is the Hamming weight of \(C\), that is,
the number of nonzero components of \(C\).
The game \(\Gamma(\NN^m, \sgCord)\) is called \emph{Nim}. 
As we have mentioned above, the Sprague-Grundy function of Nim is given by
\[
 \nsigma[2](X) = \nsigma[2, m](X) = x^0 \oplus_2 \cdots \oplus_2 x^{m - 1},
\]
where \(x^i\) denotes the \(i\)th component of \(X\), that is, \(X = (x^0, \ldots, x^{m - 1})\).
 
\end{example}

\comment{connect}
\label{sec:orgheadline3}
As in Example \ref{orgtarget7}, elements in \(\NN^m\) will be denoted by capital Latin letters, and
components of them by small Latin letters. For example, if \(C \in \NN^m\), then \(C = (c^0, \ldots, c^{m - 1})\).

\comment{SG system}
\label{sec:orgheadline4}
Because the set \(\sgC\) plays a key role in this paper,
we introduce the following notation.
Let \(\sg_{\Gamma(\cP, \sgC)}\) denote the Sprague-Grundy function of a take-away game \(\Gamma(\cP, \sgC)\).
For a nonnegative integer-valued function \(\phi: \cP \to \NN\),
the set \(\sgC\) is called a \emph{Sprague-Grundy system} of \(\phi\) if \(\sg_{\Gamma(\cP, \sgC)}\) equals \(\phi\):
\[
 \sg_{\Gamma(\cP, \sgC)}(X) = \phi(X) \tforevery X \in \cP.
\]
For example, let \(\sgCord[2,m][1]\) be the right-hand side of (\ref{orgtarget8}).
Then \(\sgCord[2,m][1]\) is a Sprague-Grundy system of the function \(\nsigma[2,m]\).
Let \(\Delta(\phi)\) denote the set of Sprague-Grundy systems of \(\phi\).
By the definition of Sprague-Grundy functions,
\begin{equation}
\label{orgtarget9}
\text{if\ } \sgC, \sgE \in \Delta(\phi) \tand \sgC \subseteq \sgD \subseteq \sgE, \text{\ then\ } \sgD \in \Delta(\phi) 
\end{equation}
(see Section \ref{orgtarget10}).

\comment{Questions}
\label{sec:orgheadline5}
We address the following three questions of Sprague-Grundy systems:
\begin{description}
\item[{1.}] Is there a Sprague-Grundy system of \(\phi\)? \par
We will prove that \(\nsigma[\ybs]\) has a Sprague-Grundy system. In other words,
\(\nsigma[\ybs]\) can be realized as the Sprague-Grundy function of a game.
\item[{2.}] What are \emph{minimal systems} of \(\phi\)? \par
A minimal (with respect to inclusion) element of \(\Delta(\phi)\) will be called a \emph{minimal system} of \(\phi\).
In general, \(\phi\) has multiple minimal systems.
We will determine when \(\nsigma[\ybs]\) has a unique minimal system, which will be called the \emph{minimum system} of \(\phi\).
We also investigate the structure of minimal systems of \(\nsigma[\ybs]\) for an arbitrary mixed base \(\ybs\).
\item[{3.}] What is the \emph{maximum system} of \(\phi\)? \par
If \(\Delta(\phi) \neq \emptyset\), then \(\Delta(\phi)\) has a maximum element, which will be called the \emph{maximum system} of \(\phi\). 
We will give a recursive construction of the maximum system of \(\nsigma[\ybs]\).
\end{description}

In this paper, we will restrict our attention to the case \(\cP = \NN^m\). 
We here give, however, two examples of \(\phi:\cP \to \NN\) with \(\cP \subsetneq \NN^m\).

 \begin{example}[Sprague-Grundy function of a \(b\)-saturation of mis\`{e}re Nim \cite{irie-misere-2018}]
 \comment{Exm. [Sprague-Grundy function of a \(b\)-saturation of mis\`{e}re Nim \cite{irie-misere-2018}]}
\label{sec:orgheadline6}
\label{orgtarget5}
Let \(b\) be an integer greater than 1.
For an integer \(n\), let \(\ord_b(n)\) be the \(b\)-adic order of \(n\), that is,
\[
 \ord_b(n) = \begin{cases}
 \max \Set{L \in \NN : b^L \text{\ divides\ } n } & \tif  n \neq 0,\\
 \infty & \tif n = 0.
 \end{cases}
\]
For example, \(\ord_2(1) = 0\) and \(\ord_2(12) = 2\).
Let \(\cP = \NN^m \setminus \set{(0,\ldots, 0)}\). 
The game \(\Gamma(\cP, \sgCord[2,m][1])\) is called \emph{mis\`{e}re Nim}. \footnote{No explicit formula for the Sprague-Grundy function of mis\`{e}re Nim is known.} 
Define a function \(\phi^b : \cP \to \NN\) by
\[
 \phi^b(X) = \nsigma[b](X) \oplus_b (\ybsp[1 + \mord_b(X)][b] - 1),
\]
where \(\mord_{b}(X) = \min \set{\ord_{b}(\ccx^i) : i \in \Omega}\)
and \(\Omega = \set{0,1,\ldots, m - 1}\). For example, 
\[
 \mord_2((0,1)) = \min \set{\ord_2(0), \ord_2(1)} = \min \set{\infty, 0} = 0,
\]
so
\[
 \phi^2((0,1)) = 0 \oplus_2 1 \oplus_2 (2^{1 + 0} - 1) = 0.
\]
Consider
\[
 \sgCord[b] =  \sgCord[b, m] = \Set{\ccC \in \NN^m \setminus \set{(0, \ldots, 0)}: \ord_{b}\left(\sum_{i \in \Omega} \ccc^i\right) = \mord_{b}(C)}.
\]
For example, \((1, 2) \in \sgCord[2]\) and \((1,1) \not \in \sgCord[2]\) because
\(\ord_2(1 + 2) = 0 = \mord_2((1,2))\) and \(\ord_2(1 + 1) = 1 > 0 = \mord_2((1,1))\).
Then \(\sgCord[b]\) is a Sprague-Grundy system of \(\phi^b\).
In other words, the Sprague-Grundy function of the game \(\Gamma(\cP, \sgCord[b])\) can be written explicitly
as follows: \(\sg_{\Gamma(\cP, \sgCord[b])} = \phi^b\).
 
\end{example}

 \begin{example}[Sprague-Grundy function of a \(b\)-saturation of Welter's game \cite{welter-theory-1954}, \cite{sato-game-1968, sato-mathematical-1970, sato-maya-1970}, \cite{irie-psaturations-2018}]
 \comment{Exm. [Sprague-Grundy function of a \(b\)-saturation of Welter's game \cite{welter-theory-1954}, \cite{sato-game-1968, sato-mathematical-1970, sato-maya-1970}, \cite{irie-psaturations-2018}]}
\label{sec:orgheadline7}
\label{orgtarget6}
Let \(b\) be an integer greater than 1.
Set
\[
 \cP = \set{X \in \NN^m : x^i \neq x^j \text{\ whenever\ } i \neq j}.
\]
Define a function \(\phi^b : \cP \to \NN\) by
\[
 \phi^b(X) = \nsigma[b](X) \oplus_b \bigoplusp[i < j][][b] \left(\ybsp[1 + \ord_b(x^i - x^j)][b] - 1\right),
\]
Then \(\sgCord[b]\) is a Sprague-Grundy system of \(\phi^b\).
It is well known that \(\sgCord[2,m][1]\) is also a Sprague-Grundy system of \(\phi^2\),
since \(\Gamma(\cP, \sgCord[2,m][1])\) is the ordinary Welter's game.
 
\end{example}

\subsection{Nim sum in a mixed base}
\label{sec:orgheadline14}
\label{orgtarget1}
We define \(\nsigma[\ybs]\) and present its Sprague-Grundy system.

\comment{Notation}
\label{sec:orgheadline9}
\label{orgtarget11}
We first introduce some notation.
Throughout this paper, \(\ybs\) denotes a sequence \((\ybs_L)_{L \in \NN} \in \NN^\NN\) with \(\ybs[L] \ge 2\) for every \(L \in \NN\).
Let \(\range{\ybs[L]}\) denote \(\set{0,1,\ldots, \ybs[L] - 1}\) equipped with the following two operations:
for \(a, b \in \range{\ybs[L]}\),
\[
 a \oplus b = (a + b) \bmod \ybs[L]  \quad \tand \quad a \ominus b = (a - b) \bmod \ybs[L].
\]
For example, in \(\range{3}\), \(1 \oplus 2 = 0\) and \(1 \ominus 2 = 2\).
Let \(\ybsp[L] = \ybs[0] \cdot \ybs[1] \cdots \ybs[L - 1]\).
For \(n \in \NN\), let \(n_L^{\ybs}\) denote the \(L\)th digit in the mixed base \(\ybs\) expansion of \(n\), that is,
if \(n_{\ge L}^{\ybs}\) is the quotient of \(n\) divided by \(\ybsp[L]\), then \(n_L^{\ybs} = n_{\ge L}^{\ybs} \bmod \ybs[L]\).
We will think of \(n_L^{\ybs}\) as an element of \(\range{\ybs[L]}\).
By definition,
\[
 n = \sum_{L \in \NN} n_L^{\ybs} \ybsp[L].
\]
Of course, if \(\ybs[L] = \ybs[0]\) for every \(L \in \NN\), then \(n_L^{\ybs}\) is the \(L\)th digit in the ordinary base \(\ybs[0]\) expansion of \(n\).
When no confusion can arise, we write \(n_L\) and \(n_{\ge L}\) instead of \(n_L^{\ybs}\) and \(n_{\ge L}^{\ybs}\), respectively. 
Let \(\Nb\) denote \(\NN\) equipped with the following two operations: 
for \(n, h \in \Nb\),
\[
 n \oplus h = \sum_{L \in \NN} (n_L \oplus h_L) \ybsp[L] \quad \tand \quad n \ominus h = \sum_{L \in \NN} (n_L \ominus h_L) \ybsp[L].
\]
It is convenient to represent \(n \in \Nb\) by \([n_0, n_1, \ldots, n_N]\) when \(n_L = 0\) for every \(L > N\).
For \(X \in \Nb^m\), define
\begin{equation}
\label{orgtarget12}
 \nsigma[\ybs](X) = \nsigma[\ybs, m](X) = x^0 \oplus \cdots \oplus x^{m - 1}.
\end{equation}
Let \(\nsigma[\ybs]<L>(X) = (\nsigma[\ybs](X))_L\) (\(=\) the \(L\)th digit of \(\nsigma[\ybs](X)\)) for \(L \in \NN\).

 \begin{example}
 \comment{Exm. ybs}
\label{sec:orgheadline10}
\label{orgtarget13}
Let \(\ybs = (60, 24, 7, \ldots)\) and \(x^0 = [30,5,1] \in \Nb\).
Then
\[
 \bnum{30, 5, 1} = 30 + 5 \cdot 60 + 1 \cdot 60 \cdot 24 = 1770.
\]
Let \(x^1 = \bnum{40, 15, 6} \in \Nb\) and \(X = (x^0, x^1) \in \Nb^2\). Then
\[
 \nsigma[\ybs](X) = \bnum{30 \oplus 40, 5 \oplus 15, 1 \oplus 6} = \bnum{10, 20, 0} = 1210.
\]
 
\end{example}

\comment{cCord}
\label{sec:orgheadline11}
We now give a Sprague-Grundy system of \(\nsigma[\ybs]\).
For an integer \(n\), let
\[
 \ord_{\ybs}(n) = \begin{cases}
 \max \Set{L \in \NN : \ybsp[L] \text{\ divides\ } n } & \tif  n \neq 0,\\
 \infty & \tif n = 0.
 \end{cases}
\]
For example, if \(\ybs = (2,3,3,2,\ldots)\), then \(\ord_{\ybs}(6) = \ord_{\ybs}(\bnum{0,0,1}) = 2\).
Consider
\begin{equation}
\label{orgtarget14}
 \sgCord[\ybs] =  \sgCord[\ybs, m] = \Set{\ccC \in \Nb^m \setminus \set{(0, \ldots, 0)}: \ord_{\ybs}\left(\sum_{i \in \Omega} \ccc^i\right) = \mord_{\ybs}(C)},
\end{equation}
where \(\mord_{\ybs}(C) = \min \set{\ord_{\ybs}(\ccc^i) :  i \in \Omega}\).
For example, if \(\ybs = (3,3,\ldots)\), then
\[
 (\bnum{1}, \bnum{0}),\ (\bnum{1,2}, \bnum{1,1}) \in \sgCord[\ybs, 2]
 \quad \tand  
 (\bnum{2}, \bnum{1}, \bnum{0})^{},\ (\bnum{1,1}, \bnum{1,1}, \bnum{1})^{} \not \in \sgCord[\ybs, 3].
\]

 \begin{theorem}
 \comment{Thm. Exist}
\label{sec:orgheadline12}
\label{orgtarget2}
The set \(\sgCord[\ybs]\) is a Sprague-Grundy system of \(\nsigma[\ybs]\).
 
\end{theorem}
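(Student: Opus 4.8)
The plan is to verify that $\nsigma[\ybs]$ satisfies the mex characterisation of the Sprague--Grundy function of $\Gamma(\NN^m,\sgCord[\ybs])$. Every edge $(X,Y)$ of this digraph has $X-Y\in\sgCord[\ybs]\subseteq\NN^m\setminus\set{(0,\ldots,0)}$, hence $Y\le X$ coordinatewise and $Y\neq X$; in particular the digraph is well-founded, so it is enough to show, for every $X$, that \textbf{(a)} $\nsigma[\ybs](X)\neq\nsigma[\ybs](Y)$ for every edge $(X,Y)$, and \textbf{(b)} for every integer $v$ with $0\le v<\nsigma[\ybs](X)$ there is an edge $(X,Y)$ with $\nsigma[\ybs](Y)=v$. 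Then $\nsigma[\ybs](X)$ is the mex of the values $\nsigma[\ybs](Y)$ over the out-neighbours of $X$, and $\sg_{\Gamma(\NN^m,\sgCord[\ybs])}=\nsigma[\ybs]$ follows by well-founded induction.

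Part (a) is short. Put $C=X-Y$ and $\ell=\mord_{\ybs}(C)$, so $\ybsp[\ell]$ divides every $c^i$; then $x^i$ and $y^i$ have the same digits below position $\ell$, and dividing $x^i=y^i+c^i$ by $\ybsp[\ell]$ and reducing modulo $\ybs[\ell]$ gives $x^i_\ell=y^i_\ell\oplus c^i_\ell$ in $\range{\ybs[\ell]}$. Hence $\nsigma[\ybs]<\ell>(X)\ominus\nsigma[\ybs]<\ell>(Y)=\bigoplus_{i\in\Omega}c^i_\ell$, which equals the level-$\ell$ digit of $\sum_{i\in\Omega}c^i$ and is nonzero precisely because $\ord_{\ybs}(\sum_{i\in\Omega}c^i)=\mord_{\ybs}(C)=\ell$. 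So $\nsigma[\ybs](X)$ and $\nsigma[\ybs](Y)$ already differ in digit $\ell$.

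The crux is (b). Write $u=\nsigma[\ybs](X)$, let $\ell$ be the highest digit position with $u_\ell\neq v_\ell$ (so $u_\ell>v_\ell$ and $u_L=v_L$ for $L>\ell$), put $\delta=u\ominus v$, and set $a_i=x^i_\ell$. Since $\bigoplus_{i\in\Omega}a_i=u_\ell\ge 1$, the integer $\sum_{i\in\Omega}a_i$ is $\ge u_\ell>v_\ell$, so I can pick integers $b_i$ with $0\le b_i\le a_i$ and $\sum_{i\in\Omega}b_i=v_\ell$; then $\bigoplus_{i\in\Omega}b_i=v_\ell$, and since $\sum_i b_i=v_\ell<u_\ell\le\sum_i a_i$ some $i^*\in\Omega$ has $b_{i^*}<a_{i^*}$. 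Now define $Y$ by replacing, in each $x^i$, the digit at position $\ell$ by $b_i$, and additionally, in $x^{i^*}$, replacing each digit at a position $L<\ell$ by $x^{i^*}_L\ominus\delta_L$. Then $y^{i^*}<x^{i^*}$ (digit $\ell$ strictly drops and higher digits are untouched) and $y^i\le x^i$ for $i\neq i^*$, so $Y\le X$ with $Y\neq X$; a digitwise computation using $\bigoplus_{j\neq i}x^j_L=u_L\ominus x^i_L$ shows $\nsigma[\ybs]<L>(Y)=v_L$ at every level $L$, hence $\nsigma[\ybs](Y)=v$. It remains to check $C=X-Y\in\sgCord[\ybs]$: for $i\neq i^*$ one has $c^i=(a_i-b_i)\ybsp[\ell]$, of $\ybs$-order $\ge\ell$, while $c^{i^*}$ has $\ybs$-order $\le\ell$, so $\mord_{\ybs}(C)=\ord_{\ybs}(c^{i^*})$. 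If $\ord_{\ybs}(c^{i^*})<\ell$, then only $c^{i^*}$ contributes to the digit of $\sum_{i\in\Omega}c^i$ at level $\ord_{\ybs}(c^{i^*})$, which is the lowest nonzero digit of $c^{i^*}$ and hence nonzero; if $\ord_{\ybs}(c^{i^*})=\ell$, then $\delta_L=0$ for $L<\ell$, so $c^{i^*}=(a_{i^*}-b_{i^*})\ybsp[\ell]$ and the level-$\ell$ digit of $\sum_{i\in\Omega}c^i$ equals $\bigoplus_{i\in\Omega}(a_i-b_i)=u_\ell\ominus v_\ell\neq 0$. In either case $\ord_{\ybs}(\sum_{i\in\Omega}c^i)=\mord_{\ybs}(C)$, so $C\in\sgCord[\ybs]$ and $(X,Y)$ is the required edge.

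The main obstacle is (b): a single-heap reduction need not suffice --- for instance, in base $(10,10,\ldots)$ with three heaps of size $3$ one cannot reach Nim value $0$ by lowering a single heap --- so several heaps must change at once, and the difficulty is that ordinary subtraction propagates borrows across the mixed-radix digits whereas $\nsigma[\ybs]$ only sees the digitwise operation $\oplus$. The device that makes everything fit is to carry out all the lower-digit bookkeeping inside the single heap $i^*$ whose digit at position $\ell$ strictly decreases (such an $i^*$ exists because $\bigoplus_{i}b_i=v_\ell\neq u_\ell=\bigoplus_i a_i$): this pins $\mord_{\ybs}(C)$ to $\ord_{\ybs}(c^{i^*})$ and forces $C\in\sgCord[\ybs]$ automatically.
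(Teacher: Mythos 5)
Your proof is correct and follows essentially the same route as the paper: part (a) is the paper's verification of (SG1) at the digit \(N=\mord_{\ybs}(C)\), and the move you build in (b) --- lower the digit at the top differing position \(\ell\) in each heap so that the drops sum to \(u_\ell\ominus v_\ell\), and absorb all lower-digit adjustments into a single heap \(i^*\) whose \(\ell\)th digit strictly decreases --- is exactly the descendant constructed in the paper's Lemma 3.2. The only difference is organizational: the paper additionally takes the distinguished heap to maximize \(x^{i^*}_{\le \ell}\) and records extra conditions so that the resulting \(C\) lies in the smaller system \(\sgCnmin[\ybs]\subseteq\sgCord[\ybs]\), which it reuses later for minimal systems, whereas you verify \(C\in\sgCord[\ybs]\) directly.
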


\begin{remark}
 \comment{Rem.fn:flanigan}
\label{sec:orgheadline13}
\label{orgtarget15}
Flanigan \cite{flanigan-Nim-1980} showed the existence of Sprague-Grundy systems of \(\nsigma[\ybs]\) with \(\ybs = (\ybs[0],\ybs[0],\ldots)\), 
i.e., \(\ybs[L] = \ybs[0]\) for every \(L \in \NN\).
Fraenkel and Lorberbom \cite{fraenkel-Nimhoff-1991} showed that with \(\ybs = (\ybs[0],2,2,\ldots)\), i.e., \(\ybs[L] = 2\) for every \(L \ge 1\).
 
\end{remark}

\subsection{Minimal systems}
\label{sec:orgheadline19}
\label{orgtarget16}
We present a characterization of minimal systems of \(\nsigma[\ybs]\), and determine \(\ybs\) such that \(\nsigma[\ybs]\) has a minimum system.
Furthermore, we investigate minimal \emph{symmetric} systems.

We first introduce restrictions of positions.
Let \(S = \set{s^0, \ldots, s^{\cck - 1}} \subseteq \Omega\), where \(s^0 < \cdots < s^{\cck - 1}\).
For \(C \in \Nb^m\), let \(C|_S\) denote \((c^{s^0}, \ldots, c^{s^{\cck - 1}}) \in \Nb^{\cck}\),
where \(C|_{\emptyset} = () \in \Nb^0\). \footnote{When \(m = 0\), the theorems in this paper are trivial, since \(\Nb^0 = \set{()}\). For example, Theorem \ref{orgtarget2} asserts that \(\emptyset\)
 is a Sprague-Grundy system of \(\nsigma[2,0]:\set{()} \to \NN\).}
For \(\sgC \subseteq \Nb^m\), define
\[
 \sgC|_S = \big\{ \ccC|_S : \ccC \in \sgC,\ \ccC|_{\Omega \setminus S} = (\underbrace{0, \ldots, 0}_{\size{\Omega \setminus S} \text{\ times}}) \big\}.
\]
For example, if \(\sgC = \set{(1,0,0), (2,0,1), (0,2,3)}\), then
\(\sgC|_{\set{0,2}} = \set{(1,0), (2,1)}\).

We next define weights of \(\sgC\) and \(\nsigma[\ybs]\).
The \emph{weight} \(\wt(\sgC)\) of \(\sgC\) is the maximum Hamming weight of elements in \(\sgC\),
that is,
\[
 \wt(\sgC) = \max \set{\wt(\ccC) : \ccC \in \sgC},
\]
where \(\max \emptyset = 0\). 
The \emph{weight} \(\wt(\nsigma[\ybs, m])\) of the function \(\nsigma[\ybs, m]\) is the minimum weight of Sprague-Grundy
systems of \(\nsigma[\ybs, m]\), that is,
\[
 \wt (\nsigma[\ybs, m]) = \min_{\sgC \in \Delta(\nsigma[\ybs, m])} \wt(\sgC).
\]
For example, \(\wt(\nsigma[2,m])  = \wt(\sgCord[2,m][1]) = \min \set{m, 1}\).

 \begin{theorem}
 \comment{Thm. min}
\label{sec:orgheadline15}
\phantomsection
\label{orgtarget3}
\begin{enumerate}
\item \(\wt (\nsigma[\ybs, m]) = \min \set{m,\ \sup \set{\ybs[L] - 1 : L \in \NN}}\), where \(\sup P\) is the supremum of \(P\) in \(\NN \cup \set{\infty}\).
\item Let \(\ccw = \wt (\nsigma[\ybs, m])\) and \(\sgC \subseteq \Nb^m\). The following three conditions are equivalent:
\begin{enumerate}
\item \(\sgC\) is a minimal system of \(\nsigma[\ybs, m]\).
\item \(\sgC |_{S}\)  is a minimal system of \(\nsigma[\ybs, \size{S}]\) for each \(S \subseteq \Omega\).
\item \(\wt (\sgC) = \ccw\) and \(\sgC |_{S}\) is a minimal system of \(\nsigma[\ybs, \ccw]\) for each \(S \in {\Omega \choose \ccw} = \set{T \subseteq \Omega : \size{T} = \ccw}\).
\end{enumerate}
\item If \(m \ge 2\), then the function \(\nsigma[\ybs, m]\) has a minimum system if and only if \(\ybs = (\ybs[0], 2, 2, \ldots)\).
\end{enumerate}
 
\end{theorem}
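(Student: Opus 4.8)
All three parts rest on a \emph{restriction lemma}: if $\sgC \in \Delta(\nsigma[\ybs, m])$ and $S \subseteq \Omega$, then $\sgC|_S \in \Delta(\nsigma[\ybs, \size{S}])$, and $\wt(\sgC|_S) \le \wt(\sgC)$. Indeed, a move of $\Gamma(\Nb^m, \sgC)$ only decreases components, so the positions whose support lies in $S$ form a subgame isomorphic to $\Gamma(\Nb^{\size{S}}, \sgC|_S)$, and $\nsigma[\ybs]$ is unchanged by padding a position with zeros, so the Sprague--Grundy values agree; in particular $\wt(\nsigma[\ybs, k])$ is non-decreasing in $k$. I also use the elementary fact (underlying (\ref{orgtarget9})) that $\sgC \in \Delta(\phi)$ holds exactly when, from every $X$, no move of $\sgC$ leads to a position of $\nsigma[\ybs]$-value $\phi(X)$ and every value $v < \phi(X)$ is attained by a move of $\sgC$; by Theorem~\ref{orgtarget2} the first clause is automatic whenever $\sgC \subseteq \sgCord[\ybs]$.

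\emph{Part 1.} Put $\ccw = \min\set{m,\ \sup\set{\ybs[L] - 1 : L \in \NN}}$ and fix $L$ with $\ybs[L] - 1 \ge \ccw$ (a bounded set of integers attains its supremum). For the lower bound take $X = (\ybsp[L], \ldots, \ybsp[L]) \in \Nb^{\ccw}$: then $\nsigma[\ybs](X) = \ccw\, \ybsp[L] \ne 0$, while any $Y \le X$ with a component equal to $\ybsp[L]$ already has nonzero $L$-th digit in $\nsigma[\ybs](Y)$; hence reaching value $0$ from $X$ forces a move to some $Y$ all of whose components lie below $\ybsp[L]$, i.e.\ a move of Hamming weight $\ccw$, so $\wt(\nsigma[\ybs, \ccw]) \ge \ccw$ and the restriction lemma gives $\wt(\nsigma[\ybs, m]) \ge \ccw$. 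For the upper bound (only the case $\ccw < m$ matters, and then every $\ybs[L] \le \ccw + 1$) I build a system of weight $\le \ccw$ by a digit-by-digit move: given $X$ and $v < \nsigma[\ybs](X)$, let $L^{\ast}$ be the top digit at which $\nsigma[\ybs](X)$ and $v$ differ, lower the $L^{\ast}$-th digits of at most $\ybs[L^{\ast}] - 1 \le \ccw$ heaps (those with nonzero $L^{\ast}$-th digit) to bring the $L^{\ast}$-th digit of the Nim-sum to $v_{L^{\ast}}$, and then use the now-free lower-order part of one of these heaps to set every digit of the Nim-sum below $L^{\ast}$ equal to that of $v$; one arranges the choices so that the move lies in $\sgCord[\ybs]$, so $\set{C \in \sgCord[\ybs] : \wt(C) \le \ccw}$ is a system of weight $\le \ccw$.

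\emph{Parts 2 and 3.} Let $\ccw = \wt(\nsigma[\ybs, m])$; note $\wt(\nsigma[\ybs, k]) = \ccw$ for $\ccw \le k \le m$, so minimal systems at these sizes have full weight $\ccw$. For (a)$\Rightarrow$(b): if some $\sgC|_S$ were non-minimal, deleting from $\sgC$ the zero-padding of a removable move of $\sgC|_S$ would leave a system, contradicting minimality of $\sgC$. For (b)$\Rightarrow$(c): specialize to $\size{S} = \ccw$, and observe that a minimal $\sgC$ has $\wt(\sgC) = \ccw$, since a heavier move would make $\sgC$ restricted to its support too heavy to be minimal there. For (c)$\Rightarrow$(a): $\wt(\sgC) = \ccw$ makes $\sgC$ the union of the zero-paddings of its restrictions $\sgC|_S$ with $\size{S} = \ccw$; to realize a decrement from $X$ to $v$, choose such an $S$ through the support of a move of the weight-$\ccw$ system from Part~1 that realizes it, note this is a genuine decrement inside the subgame on $S$, hence realized by $\sgC|_S$ and therefore by $\sgC$, so $\sgC$ is a system — evidently a minimal one. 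By (c), minimal systems of $\nsigma[\ybs, m]$ are exactly the zero-padded unions of copies of minimal systems of $\nsigma[\ybs, \ccw]$, so $\nsigma[\ybs, m]$ has a minimum iff $\nsigma[\ybs, \ccw]$ has a \emph{unique} minimal system. When $\ybs = (\ybs[0], 2, 2, \ldots)$ I prove uniqueness: digits $L \ge 1$ are binary and so as rigid as in ordinary Nim, while digit $0$ is a base-$\ybs[0]$ additive counter on $\ccw \le \ybs[0] - 1$ heaps, which is rigid as well — this recovers the minimum of Fraenkel and Lorberbom~\cite{fraenkel-Nimhoff-1991}. When $\ybs[L] \ge 3$ for some $L \ge 1$, I exhibit two distinct minimal systems of $\nsigma[\ybs, \ccw]$: at such a digit, completing the forced moves to a system requires a real choice — several weight-$\ccw$ moves serve the same decrement, because the lower digits may be reset arbitrarily — whereas at digit $0$ there is nothing below to reset.

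The crux is the upper bound of Part~1: controlling precisely which low-weight moves realize a prescribed decrement of $\nsigma[\ybs]$ once mixed-radix borrows are in play. The same bookkeeping powers the reconstruction step in (c)$\Rightarrow$(a), and in sharper form the digit-$0$-versus-higher-digits asymmetry behind Part~3. Once it is in hand, the deletion argument for (a)$\Rightarrow$(b) and the explicit pair of minimal systems for the negative direction of Part~3 are routine verifications.
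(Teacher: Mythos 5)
There is a genuine gap, and it sits exactly at the point you dismiss as routine. Throughout parts (2) and (3) you pass from a covering task at a position \(X \in \Nb^m\) to a covering task at the restriction \(X|_S\), and you assert that the restricted task is ``a genuine decrement inside the subgame on \(S\)'', i.e.\ that the target value \(h' = h \ominus \ell\) (where \(\ell = \bigoplus_{i \notin S} x^i\)) satisfies \(h' < \nsigma[\ybs](X|_S)\). This is false in general, because subtracting \(\ell\) digitwise without borrows does not preserve order. Concretely, take \(\ybs = (3,3,\ldots)\), \(m = 3\) (so \(\ccw = 2\)), \(X = (2,2,1)\), \(n = \nsigma[\ybs](X) = 2\), and \(h = 0\): every weight-\(\le 2\) move realizing the decrement has support inside one of the three sets \(S \in {\Omega \choose 2}\), and for all three one finds \(h \ominus \ell > \nsigma[\ybs](X|_S)\) (the cases give \(2 > 1\), \(1 > 0\), \(1 > 0\)). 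So no admissible choice of \(S\) turns the task into a decrement of the subgame, and (SG2) for \(\sgC|_S\) gives you nothing directly. The same defect undercuts your deletion argument for (a)\(\Rightarrow\)(b) — removing the zero-padding of a move that is removable from \(\sgC|_S\) may break the covering of positions whose components outside \(S\) are nonzero — and your claim in (b)\(\Rightarrow\)(c) that a minimal system cannot contain a move of weight exceeding \(\ccw\).

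The missing ingredient is the paper's over-covering lemma (Lemma \ref{orgtarget52} and Corollary \ref{orgtarget53}): a set satisfying (SG2) also covers \(X\) at values \(h > n\), provided the top digit \(N\) at which \(n\) and \(h\) differ satisfies \(\sum_i x^i_N \ge \ybs[N] - 1\). The locality lemma (Lemma \ref{orgtarget56}) and the replacement lemma (Lemma \ref{orgtarget58}) — which together carry all of part (2) and the reduction in part (3) — run on a dichotomy: after sorting the \(N\)-th digits, either the \(\ccw\) largest already sum to at least \(\ybs[N] - 1\) and over-covering applies, or the omitted heaps all have zero \(N\)-th digit and the restricted task genuinely is a decrement. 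In the example above only the first horn is available. Your part (1) (lower bound via \(X = (\ybsp[N], \ldots, \ybsp[N], 0, \ldots, 0)\), upper bound via a weight-\(\ccw\) system essentially equal to the paper's \(\sgCnmin[\ybs]\)) and your explicit bifurcation in part (3) match the paper's route, but without the over-covering dichotomy the three equivalences in part (2), and hence the reduction of part (3) to \(\ccw\) heaps, are not established.
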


\begin{remark}
 \comment{Rem. ryuo}
\label{sec:orgheadline16}
\label{orgtarget17}
Let \(\ybs = (\ybs[0], 2, 2, \ldots)\).
In \cite{fraenkel-Nimhoff-1991}, it is proved that \(\nsigma[\ybs, m]\) has a minimum system \(\sgC\).
The game \(\Gamma(\Nb^m, \sgC)\) is called \emph{cyclic Nimhoff}.
 
\end{remark}

\comment{Symmetric}
\label{sec:orgheadline17}
We next consider minimal \emph{symmetric} systems.
Let \(\sgC\) be a Sprague-Grundy system of \(\nsigma[\ybs]\) and \(C \in \sgC\).
In general, an element obtained by permuting the coordinates of \(C\)
is not necessarily in \(\sgC\). \footnote{We will give a not symmetric Sprague-Grundy system in Example \ref{orgtarget18}.}
This leads to the following definitions.
A subset \(\sgB\) of \(\Nb^m\) is said to be \emph{symmetric} if
\[
  \fS_m(B) \subseteq \sgB \quad \tforevery B \in \sgB,
\]
where \(\fS_m\) is the symmetric group of \(\set{0,1,\ldots, m - 1}\)
and
\[
 \fS_m(B) = \set{(b^{\pi(0)}, \cdots, b^{\pi(m - 1)}) : \pi \in \fS_m}.
\]
For example, \(\fS_2((0,1)) = \set{(0,1), (1,0)}\).
A symmetric Sprague-Grundy system \(\sgC\) of \(\nsigma[\ybs]\) is called a \emph{minimal symmetric system}
if \(\sgC\) is minimal among all symmetric Sprague-Grundy systems of \(\nsigma[\ybs]\).
If \(\nsigma[\ybs]\) has a unique minimal symmetric system, then it is called the \emph{minimum symmetric system} of \(\nsigma[\ybs]\).

 \begin{theorem}
 \comment{Thm.}
\label{sec:orgheadline18}
\label{orgtarget19}
If \(m \ge 2\), then the function \(\nsigma[\ybs, m]\) has a minimum symmetric system
if and only if \(\ybs = (\ybs[0], 2, 2, \ldots)\) or \(\ybs = (2, 3, 2, 2, \ldots)\).
 
\end{theorem}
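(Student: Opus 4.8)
The plan is to prove the two implications of the equivalence separately, leaning on the structure theorem for minimal systems (Theorem~\ref{orgtarget3}) and on the sandwich property~(\ref{orgtarget9}). For sufficiency there are two cases. If $\ybs = (\ybs[0], 2, 2, \ldots)$, then by Theorem~\ref{orgtarget3}(3) the function $\nsigma[\ybs, m]$ has a unique minimum system $\sgC$; since $\oplus$ is commutative, $\nsigma[\ybs, m]$ is invariant under permuting its arguments, so $\pi(\sgC) \in \Delta(\nsigma[\ybs, m])$ for every $\pi \in \fS_m$, whence uniqueness forces $\pi(\sgC) = \sgC$, and thus $\sgC$ is symmetric and is the minimum symmetric system. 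If $\ybs = (2, 3, 2, 2, \ldots)$, then $\wt(\nsigma[\ybs, m]) = 2$ by Theorem~\ref{orgtarget3}(1); here I would exhibit an explicit symmetric subset $\sgD$ of $\sgCord[\ybs]$ — the weight-$1$ elements together with a carefully chosen symmetric family of weight-$2$ elements realising the ``carries'' at the single non-binary digit (level~$1$) — and prove: (i)~$\sgD \in \Delta(\nsigma[\ybs, m])$, which, since $\sgCord[\ybs] \in \Delta(\nsigma[\ybs, m])$ by Theorem~\ref{orgtarget2} and $\sgD \subseteq \sgCord[\ybs]$, reduces to the completeness check that from every position $X$ each value $v < \nsigma[\ybs](X)$ is still attained by some move in $\sgD$; and (ii)~$\sgD \subseteq \sgC'$ for every symmetric $\sgC' \in \Delta(\nsigma[\ybs, m])$, by showing that deleting any $\fS_m$-orbit from $\sgD$ destroys the Sprague-Grundy property at an explicit position. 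Because level~$0$ is base~$2$, the only departure from ordinary Nim lives at one digit, and there the required weight-$2$ moves are pinned down up to permutation; that is exactly why a symmetric minimum survives although, by Theorem~\ref{orgtarget3}(3), no minimum system does. Together (i) and (ii) make $\sgD$ the minimum symmetric system.

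For necessity I would argue the contrapositive: if $\ybs$ is neither $(\ybs[0], 2, 2, \ldots)$ nor $(2, 3, 2, 2, \ldots)$, then $\nsigma[\ybs, m]$ has no minimum symmetric system. The key observation is that a minimum symmetric system, were it to exist, would lie inside every symmetric system, so by~(\ref{orgtarget9}) the intersection of \emph{any} two symmetric Sprague-Grundy systems would again be a Sprague-Grundy system; hence it suffices to exhibit two symmetric systems $\sgC, \sgD \in \Delta(\nsigma[\ybs, m])$ with $\sgC \cap \sgD \notin \Delta(\nsigma[\ybs, m])$, together with a position $X$ at which $\sgC \cap \sgD$ fails the mex identity (some value $v < \nsigma[\ybs](X)$ being unattainable from $X$ because the moves producing it were split between $\sgC \setminus \sgD$ and $\sgD \setminus \sgC$). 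Since $\ybs$ fails the first form, some $\ybs[L] \ge 3$ with $L \ge 1$, so the weight $w = \wt(\nsigma[\ybs, m]) = \min\{m,\ \sup_{L \in \NN}(\ybs[L] - 1)\}$ satisfies $w \ge 2$, and the witnessing $X$ can be taken supported on at most $w$ coordinates. Failing both allowed forms forces $\ybs$ to have a digit of base $\ge 4$, or a base-$\ge 3$ digit at level $\ge 2$, or two digits of base $\ge 3$; in each situation the ``carry'' lowering the relevant digit can be realised by two distinct symmetric families of moves that cannot both be discarded, and letting $\sgC$ retain one family and $\sgD$ the other makes $\sgC \cap \sgD$ omit exactly the moves forced at that position. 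Verifying in each case that $\sgC$ and $\sgD$ are genuine Sprague-Grundy systems (again by a completeness check) is the routine-but-lengthy part.

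I expect the sufficiency in the case $\ybs = (2, 3, 2, 2, \ldots)$ to be the main obstacle: one must identify the correct $\sgD$, carry out the completeness check that it is a Sprague-Grundy system, and — the delicate step — prove that every symmetric system contains it, which needs an essentially complete account of which weight-$2$ orbits are forced. The necessity direction is conceptually lighter once the intersection trick is in place, but still requires a finite case analysis over the possible ``extra'' non-binary structure of $\ybs$ and, in each case, the verification that the two candidate sets really are Sprague-Grundy systems.
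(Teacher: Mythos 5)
Your overall architecture matches the paper's in outline, and two of your individual points are sound: for $\ybs = (\ybs[0],2,2,\ldots)$ the unique minimum system is invariant under $\fS_m$ by uniqueness and is therefore the minimum symmetric system, and your observation that an existing minimum symmetric system would, via (\ref{orgtarget9}), force the intersection of any two symmetric systems to lie in $\Delta(\nsigma[\ybs,m])$ is a valid reformulation of the necessity direction. However, the proposal defers precisely the steps that carry the mathematical content, and these are genuine gaps rather than routine verifications. For $\ybs = (2,3,2,2,\ldots)$ you never identify the candidate minimum symmetric system, nor prove that every symmetric system contains it. The paper's candidate is $\sgCnmin[\ybs]$, and the containment rests on a complete classification for $m=2$ (Example \ref{orgtarget18}): every Sprague--Grundy system of $\nsigma[\ybs,2]$ must contain $D=(2,2)$, must contain one of $C^{(0)}=(1,2)$, $C^{(1)}=(2,1)$, and must contain one of $E^{(0)}=(3,2)$, $E^{(1)}=(2,3)$, so a symmetric one contains all of $\sgCnmin[\ybs,2]$; the passage to general $m$ then uses $\wt(\sgCnmin[\ybs,m])=2$ together with the restriction machinery. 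Without pinning down which weight-$2$ orbits are forced, neither your step (i) nor your step (ii) can be checked, and it is not automatic that the union of the forced orbits is itself a Sprague--Grundy system.

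In the necessity direction you still owe two things. First, the lifting of the $m=2$ obstruction to general $m$: the paper needs a symmetric analogue of the replacement lemma (its Step 1) for this, and your intersection trick does not remove that need, since the two symmetric systems whose intersection fails must be exhibited inside $\Nb^m$, not $\Nb^2$. Second, the explicit constructions and the proof that they satisfy (\hyperref[orgtarget33]{SG2}); this is the delicate part, not the routine part. In the paper's Step 3 the second symmetric system is $(\sgCnmin[\ybs]\setminus\fS_2(C))\cup\fS_2(D)$ with $C=(1,\ybsp[N])$ and $D=(2,\ybsp[N]-1)$, and one must check that $D$ reaches exactly the values that only $\fS_2(C)$ could reach, namely from positions with $X_{\le N}=(\ybsp[N],\ybsp[N])$; and the residual case $\ybs=(2,\ybs[1],2,2,\ldots)$ with $\ybs[1]\ge 4$ needs a separately hand-built pair of systems. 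Finally, your case split (``a digit of base $\ge 4$, or a base-$\ge 3$ digit at level $\ge 2$, or two digits of base $\ge 3$'') is not aligned with the true dichotomy: a large digit at level $0$ alone is harmless, since $(\ybs[0],2,2,\ldots)$ always admits a minimum symmetric system. The cases that actually require separate treatment are ``$\ybs[0]\ge 3$ or $\ybs[N]\ge 3$ for some $N\ge 2$'' versus ``$\ybs=(2,\ybs[1],2,2,\ldots)$ with $\ybs[1]\ge 4$.''
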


\subsection{Maximum systems}
\label{sec:orgheadline28}
\label{orgtarget20}
We will give a recursive construction of the maximum system of \(\nsigma[\ybs]\).

Let \(\sgCmax[\ybs] = \sgCmax[\ybs,m] = \max \Delta(\nsigma[\ybs, m])\).
By the definition of Sprague-Grundy functions,
\begin{equation}
\label{orgtarget21}
 \sgCmax[\ybs] = \set{C \in \Nb^m: \nsigma[\ybs](X + C) \neq \nsigma[\ybs](X) \tforevery X \in \Nb^m}
\end{equation}
(see Section \ref{orgtarget10}).
We begin by comparing \(\sgCmax[\ybs]\) and \(\sgCord[\ybs]\) defined in (\ref{orgtarget14}).
In fact, for \(m \ge 2\), we can show that  \(\sgCmax[\ybs, m] = \sgCord[\ybs, m]\) if and only if \(\ybs = (\ybs[0], 2, 2, \ldots)\)
(Theorem \ref{orgtarget22}).

 \begin{example}
 \comment{Exm. CandFbinary}
\label{sec:orgheadline20}
\label{orgtarget23}
Let \(\ybs = (2,2,\ldots)\). Then the maximum system  \(\sgCmax[\ybs]\) is simple.
Consider \(\ccC = (1,2) = (\bnum{1}, \bnum{0,1})\) and \(\ccF = (1, 3) = (\bnum{1}, \bnum{1,1})\).
Then \(C \in \sgCord[\ybs]\) and \(\ccF \not \in \sgCord[\ybs]\), since \(\ord_{\ybs} (\sum c^i) = 0 = \mord_{\ybs}(C)\) and \(\ord_{\ybs} (\sum f^i) = 2 > 0 = \mord_{\ybs}(F)\).
Moreover, \(C \in \sgCmax[\ybs]\) and \(\ccF \not \in \sgCmax[\ybs]\) because
\[ 
\nsigma[\ybs]<0>(X + \ccC) = \nsigma[\ybs]<0>(X) \oplus 1 \neq \nsigma[\ybs]<0>(X) \tforevery X \in \Nb^2
\]
and \(\nsigma[\ybs]((1, 0) + \ccF) = \nsigma[\ybs](2,3) = 1 = \nsigma[\ybs]((1, 0))\).
As we have mentioned, the maximum system \(\sgCmax[\ybs]\) is actually equal to \(\sgCord[\ybs]\) in this case.
 
\end{example}

 \begin{example}
 \comment{Exm. CandF}
\label{sec:orgheadline21}
\label{orgtarget24}
Let \(\ybs = (3,3,\ldots)\).
Then the maximum system \(\sgCmax[\ybs]\) is not as simple as \(\sgCmax[(2,2,\ldots)]\).
Consider \(\ccC = (2, 10) = (\bnum{2}, \bnum{1,0,1})\) and \(\ccF = (2, 4) = (\bnum{2}, \bnum{1,1})\).
Then \(\ccC \not \in \sgCord[\ybs]\) and \(\ccF \not \in \sgCord[\ybs]\), since \(\ord_{\ybs} (\sum c^i) = 1 > 0 = \mord_{\ybs}(C)\)
and \(\ord_{\ybs} (\sum f^i) = 1 > 0 = \mord_{\ybs}(F)\).
However, \(\ccC \in \sgCmax[\ybs]\) and \(\ccF \not \in \sgCmax[\ybs]\).
Indeed, \(\nsigma[\ybs]((1, 2) + \ccF) = \nsigma[\ybs](3, 6) = 0 = \nsigma[\ybs]((1, 2))\),
so \(\ccF \not \in \sgCmax[\ybs]\).
A direct computation shows that
\[
 \nsigma[\ybs]<1>(X + C) \neq \nsigma[\ybs]<1>(X) \quad \tor \quad \nsigma[\ybs]<2>(X + C)\neq \nsigma[\ybs]<2>(X) \quad \tforevery X \in \Nb^2.
\]
Therefore \(\ccC \in \sgCmax[\ybs] \setminus \sgCord[\ybs]\).
In particular, \(\sgCmax[\ybs] \supsetneq \sgCord[\ybs]\).
We will present a systematic way to determine whether \(\ccC \in \sgCmax[\ybs]\) at the end of this subsection.
 
\end{example}

\comment{Algorithm}
\label{sec:orgheadline22}
We now give a recursive construction of \(\sgCmax[\ybs]\).
Set \(\cF^{\ybs} = \cF^{\ybs, m} = \Nb^m \setminus \sgCmax[\ybs, m]\).
Let \(\Chopped{\ybs} = \ybs[\ge 1] = (\beta_1, \beta_2, \ldots)\).
For \(n \in \Nb\) and \(F \in \Nb^m\), let
\(\Chopped{n} = n_{\ge 1} = [n_1, n_2 \ldots] \in \Nb[\Chopped{\ybs}]\) and \(\Chopped{F} = (\Chopped{f^0}, \ldots, \Chopped{f^{m - 1}}) \in \Nb[\Chopped{\ybs}]^{m}\).
Define
\begin{equation}
\label{orgtarget25}
 \Carry(F) = \set{\ccr \in \set{0,1}^m : r^i \le \ccf^i_0 \tforevery i \in \Omega}.
\end{equation}
For example, if \(\ybs = (4,4,\ldots)\) and \(F = (\bnum{3}, \bnum{0,2}) \in \Nb^2\), then
\(\Carry(F) = \set{0,1} \times \set{0} = \set{(0,0), (1,0)}\).
We will see that \(\Carry(F)\) is derived from carry in Subsection \ref{orgtarget26}.
For \(L \in \NN\), define
\[
 \sgFA<L> = \sgFA[\ybs, m]<L> = \Set{F \in \Nb^m : \nsigma[\ybs]<0>(F) = 0, \quad \Chopped{F} + r \in \sgFA[\Chopped{\ybs}, m]<L - 1> \tforsome r \in \Carry(F)},
\]
where \(\sgFA[\Chopped{\ybs}, m]<-1> = \set{(0,\ldots, 0)}\).
For example, if \(\ybs = (3,3,\ldots)\) and \(F = (\bnum{2}, \bnum{1,1}) \in \Nb^2\), then 
\[
 \sgFA[\ybs, 2]<0> = \set{(0,0), (1,2), (2,1)}
\]
and \(F \in \sgFA[\ybs, 2]<1>\). Indeed, \(\Carry(F) = \set{0,1}^2 \ni (1,1)\) and
\[
 \Chopped{F} + (1,1) = (0,1) + (1,1) = (1,2) \in \sgFA[\ybs, 2]<0> = \sgFA[\Chopped{\ybs}, 2]<0>.
\]
Since \(\nsigma[\ybs]<0>(F) = 2 \oplus 1 = 0\), we see that \(F \in \sgFA[\ybs, 2]<1>\).

 \begin{theorem}
 \comment{Thm.}
\label{sec:orgheadline23}
\label{orgtarget4}
For \(L \in \NN\),
\begin{equation}
\label{orgtarget27}
 \Set{F \in \sgF^{\ybs} : \max F \le \ybsp[L + 1] - \ybsp[L]} \subseteq \sgFA<L>  \subseteq \sgF^{\ybs}.
\end{equation}
In particular,
\begin{enumerate}
\item \(\displaystyle \sgF^{\ybs} = \bigcup_{L \in \NN} \sgFA<L>\).
\item \(F \in \sgFA\) if and only if \(\nsigma[\ybs]<0>(F) = 0\) and \(\Chopped{F} + r \in \sgF^{\Chopped{\ybs}}\) for some \(r \in \Carry(F)\).
\item \(\displaystyle \sgCmax[\ybs] = \Set{C \in \Nb^m : \max C \le \ybsp[L + 1] - \ybsp[L] \tand\  \ccC \not \in  \sgFA<L>  \tforsome L \in \NN}\).
\end{enumerate}
 
\end{theorem}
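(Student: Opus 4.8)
The starting point is the identification $\sgF^{\ybs} = \Nb^m \setminus \sgCmax[\ybs] = \set{F \in \Nb^m : \nsigma[\ybs](X + F) = \nsigma[\ybs](X) \text{ for some } X \in \Nb^m}$ coming from (\ref{orgtarget21}). My plan is to prove first the recursion
\[
 F \in \sgF^{\ybs} \iff \nsigma[\ybs]<0>(F) = 0 \quad\text{and}\quad \Chopped{F} + r \in \sgF^{\Chopped{\ybs}} \text{ for some } r \in \Carry(F),
\]
which is exactly part~(2) once part~(1) identifies $\sgFA$ with $\bigcup_{L} \sgFA<L> = \sgF^{\ybs}$; the two-sided inclusion (\ref{orgtarget27}) then follows by induction on $L$, and parts~(1) and~(3) are formal.

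For the recursion I would analyse each ordinary addition $x^i + f^i$ one digit at a time. Writing $r^i = \lfloor (x^i_0 + f^i_0)/\ybs[0] \rfloor \in \set{0,1}$ for the carry out of the units place, one checks $(x^i + f^i)_0 = (x^i_0 + f^i_0) \bmod \ybs[0] = x^i_0 \oplus f^i_0$ and $\Chopped{(x^i + f^i)} = \Chopped{x^i} + \Chopped{f^i} + r^i = \Chopped{x^i} + (\Chopped{F} + r)^i$, whereas both $n \mapsto n_0$ and $n \mapsto \Chopped{n}$ commute with $\oplus$ since $\oplus$ is digitwise. As a natural number is determined by its units digit together with the number obtained by deleting that digit, the equation $\nsigma[\ybs](X + F) = \nsigma[\ybs](X)$ is equivalent to the conjunction of its units‑digit part --- which does not involve $X$ and is exactly $\nsigma[\ybs]<0>(F) = 0$ --- and its deleted‑digit part --- which reads $\nsigma[\Chopped{\ybs}](Y + (\Chopped{F} + r)) = \nsigma[\Chopped{\ybs}](Y)$ for $Y = (\Chopped{x^0}, \ldots, \Chopped{x^{m-1}})$, i.e.\ $\Chopped{F} + r \in \sgF^{\Chopped{\ybs}}$. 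For ``$\Rightarrow$'', any witness $X$ also satisfies $r \in \Carry(F)$: $r^i = 0$ when $f^i_0 = 0$ (no carry is produced), and $r^i \le 1 \le f^i_0$ otherwise. For ``$\Leftarrow$'', given $r \in \Carry(F)$ and a witness $Y$ of $\Chopped{F} + r \in \sgF^{\Chopped{\ybs}}$, set $\Chopped{x^i} = y^i$ and pick $x^i_0 \in \range{\ybs[0]}$ producing carry $r^i$ out of the units place --- possible \emph{exactly} because $r^i \le f^i_0$ (use $x^i_0 = 0$ if $r^i = 0$, and $x^i_0 = \ybs[0] - f^i_0$ if $r^i = 1$) --- so that the same digit split makes $X$ a witness of $F \in \sgF^{\ybs}$.

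Granting the recursion, (\ref{orgtarget27}) splits into two inductions on $L$. First, $\sgFA<L> \subseteq \sgF^{\ybs}$ by induction on $L \ge -1$: the base $\sgFA[\Chopped{\ybs}, m]<-1> = \set{(0,\ldots,0)} \subseteq \sgF^{\Chopped{\ybs}}$ is trivial, and if $F \in \sgFA<L>$ then $\nsigma[\ybs]<0>(F) = 0$ and $\Chopped{F} + r \in \sgFA[\Chopped{\ybs}, m]<L - 1> \subseteq \sgF^{\Chopped{\ybs}}$ for some $r \in \Carry(F)$, so $F \in \sgF^{\ybs}$ by the recursion. Second, $\set{F \in \sgF^{\ybs} : \max F \le \ybsp[L + 1] - \ybsp[L]} \subseteq \sgFA<L>$, again by induction on $L$: for $L = 0$, $\max F \le \ybs[0] - 1$ forces $\Chopped{F} = (0,\ldots,0)$ while $F \in \sgF^{\ybs}$ forces $\nsigma[\ybs]<0>(F) = 0$, so $F \in \sgFA<0>$ with $r = (0,\ldots,0)$; for $L \ge 1$, the recursion supplies $r \in \Carry(F)$ (so $r^i \le f^i_0$) with $\Chopped{F} + r \in \sgF^{\Chopped{\ybs}}$, and the only non‑formal point is the estimate $\max(\Chopped{F} + r) \le (\ybsp[L + 1] - \ybsp[L])/\ybs[0]$, which is precisely the threshold occurring in (\ref{orgtarget27}) for the base $\Chopped{\ybs}$ at level $L - 1$. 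This rests on $\ybsp[L + 1] - \ybsp[L] = \ybs[0]\,(\ybs[1] \cdots \ybs[L] - \ybs[1] \cdots \ybs[L - 1])$ (so the right side is an integer) together with the observation that if $r^i = 1$ then $f^i_0 \ge 1$, whence $\Chopped{f^i}$ cannot be as large as $(\ybsp[L + 1] - \ybsp[L])/\ybs[0]$ without $f^i$ exceeding $\ybsp[L + 1] - \ybsp[L]$. Applying the inductive hypothesis for $(\Chopped{\ybs}, L - 1)$ then gives $\Chopped{F} + r \in \sgFA[\Chopped{\ybs}, m]<L - 1>$, so $F \in \sgFA<L>$.

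From (\ref{orgtarget27}) the rest is immediate: $\bigcup_L \sgFA<L> \subseteq \sgF^{\ybs}$, and since each $F \in \Nb^m$ has $\max F \le \ybsp[L + 1] - \ybsp[L]$ once $L$ is large (the right side tends to infinity), also $\sgF^{\ybs} \subseteq \bigcup_L \sgFA<L>$; this is~(1), and with $\sgFA = \sgF^{\ybs}$ the recursion becomes~(2). For~(3), given $C \in \Nb^m$ choose $L$ with $\max C \le \ybsp[L + 1] - \ybsp[L]$; for this $L$, (\ref{orgtarget27}) makes ``$C \notin \sgFA<L>$'' equivalent to ``$C \notin \sgF^{\ybs}$'', that is to $C \in \sgCmax[\ybs]$, which is the claimed description. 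I expect the substantive step to be the recursion --- in particular recognising that the carry leaving the units place of each $x^i + f^i$ ranges exactly over $\Carry(F)$, and that absorbing it turns ``positions $\ge 1$ of the $\ybs$‑problem'' into a genuine $\Chopped{\ybs}$‑problem for $\Chopped{F} + r$; once that is in place the two inductions are short, their only arithmetic input being the identity for $\ybsp[L + 1] - \ybsp[L]$ and the forced‑carry remark it entails.
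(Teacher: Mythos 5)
Your proposal is correct and takes essentially the same approach as the paper: the same units-digit/carry decomposition (the identity \(\Chopped{n + h} = \Chopped{n} + \Chopped{h} + r_1(n, h)\) together with the identification of \(\Carry(F)\) as the set of achievable carries), the same witness construction \(X = x + \ybs[0]\Chopped{X}\), and the same digit bound \(\max(\Chopped{F} + r) \le \ybsp[L][\Chopped{\ybs}] - \ybsp[L-1][\Chopped{\ybs}]\) needed to run the induction on \(L\). The only difference is organizational: you establish the recursion in part (2) for \(\sgF^{\ybs}\) directly and then deduce (\ref{orgtarget27}), whereas the paper isolates only the forward direction of that recursion as Lemma \ref{orgtarget67}, proves (\ref{orgtarget27}) by a single induction that contains the reverse construction, and reads off (2) afterwards.
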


\comment{How to check}
\label{sec:orgheadline24}
The assertion (2) of Theorem \ref{orgtarget4} allows us to determine whether \(F \in \sgFA\) or \(F \in \sgCmax[\ybs]\).

 \begin{example}
 \comment{Exm. CandF2}
\label{sec:orgheadline25}
\label{orgtarget28}
Let \(C\) be as in Example \ref{orgtarget24}.
To verify that \(C \in \sgCmax[\ybs]\),
it suffices to show that \(\Chopped{C} + r \not \in \sgF^{\Chopped{\ybs}}\) for every \(r \in \Carry(C)\).
Note that if \(\Chopped{C} + r  \in \sgF^{\Chopped{\ybs}}\), then \(\nsigma[\Chopped{\ybs}]<0>(\Chopped{C} + r) = 0\).
Since \(\Carry(C) = \set{0,1}^2\),
it follows that if \(\ccr \in \Carry(C)\) and \(\nsigma[\ybs]<0>(\Chopped{C} + \ccr) = 0\), then \(\ccr = (0,0)\).
Moreover, since \(\Chopped{C} = (\bnum{0}, \bnum{0,1})\) and \(\rho(\Chopped{C}) = \set{0}^2\),
we see that \(\Chopped{C} \not \in \sgF^{\Chopped{\ybs}}\).
Consequently \(\ccC \not \in \sgF^{\ybs}\), that is, \(\ccC \in \sgCmax[\ybs]\).
 
\end{example}

 \begin{theorem}
 \comment{Thm.}
\label{sec:orgheadline26}
\label{orgtarget22}
If \(m \ge 2\), then \(\sgCmax[\ybs, m] = \sgCord[\ybs, m]\) if and only if \(\ybs = (\ybs[0], 2, 2, \ldots)\).
 
\end{theorem}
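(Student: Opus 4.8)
The plan is to prove both inclusions by using the recursive description of $\sgF^{\ybs} = \Nb^m \setminus \sgCmax[\ybs, m]$ from Theorem \ref{orgtarget4}, comparing it with $\Nb^m \setminus \sgCord[\ybs, m]$. Note first that $C \notin \sgCord[\ybs, m]$ means $\ord_{\ybs}\bigl(\sum_{i}c^i\bigr) > \mord_{\ybs}(C)$ (together with $C \neq (0,\ldots,0)$, which is automatic here since $(0,\ldots,0)\in\sgCmax\cap\sgCord$). Since $\sgCord[\ybs, m]\in\Delta(\nsigma[\ybs, m])$ by Theorem \ref{orgtarget2} and $\sgCmax[\ybs, m]$ is the maximum system, we always have $\sgCord[\ybs, m]\subseteq\sgCmax[\ybs, m]$, i.e.\ $\sgF^{\ybs}\subseteq\Nb^m\setminus\sgCord[\ybs, m]$. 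So the content of the theorem is: equality holds exactly when $\ybs = (\ybs[0], 2, 2, \ldots)$.

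For the ``if'' direction, assume $\ybs[L] = 2$ for all $L \ge 1$. I would show $\Nb^m\setminus\sgCord[\ybs, m]\subseteq\sgF^{\ybs}$ by induction on $\max C$ using Theorem \ref{orgtarget4}(2). Suppose $C\notin\sgCord[\ybs, m]$, so $\ord_{\ybs}(\sum c^i) > \mord_{\ybs}(C) =: \ell$. First handle $\ell = 0$: then every $c^i$ is odd at digit $0$ in particular some $c^i_0 = \nsigma[\ybs]_0(C)$-contributing, and $\sum c^i \equiv 0 \pmod{\ybs[0]}$; a short digit computation in base $\ybs[0]$ gives $\nsigma[\ybs]_0(C) = 0$ (here one uses that when $\mord_\ybs(C)=0$ the number of $i$ with $c^i_0\neq 0$ has the right parity/sum property forcing $\bigoplus c^i_0 = 0$ in $\ZZ/\ybs[0]$ — this is the one spot needing care). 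Then I pick $r\in\Carry(C)$ appropriately (the carry out of digit $0$ when adding the $c^i$) and check $\Chopped{C} + r\notin\sgCord[\Chopped{\ybs}, m] = \sgF^{\Chopped{\ybs}}$ by the inductive hypothesis applied to $\Chopped{\ybs} = (2,2,\ldots)$, using that $\ord_{\Chopped\ybs}(\sum(\Chopped{c^i}+r^i))$ relates to $\ord_\ybs(\sum c^i) - 1$ and that for the all-$2$ base $\mord$ and $\ord$ of a sum behave transparently. The case $\ell \ge 1$ reduces to the chopped base since then $\nsigma[\ybs]_0(C) = 0$ trivially ($c^i_0 = 0$ for all $i$) and $\Carry(C) = \{(0,\ldots,0)\}$, so $C\in\sgF^\ybs \iff \Chopped C\in\sgF^{\Chopped\ybs}$, and $\Chopped C\notin\sgCord[\Chopped\ybs, m]$ with smaller $\max$.

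For the ``only if'' direction, suppose some $\ybs[L] \ge 3$ with $L \ge 1$; I must exhibit $C\in\sgCmax[\ybs, m]\setminus\sgCord[\ybs, m]$. Take the smallest such $L$. The idea is to mimic Example \ref{orgtarget24}: choose $C$ supported on digits $0$ and $L$ only, with $\mord_\ybs(C) = 0$ but $\ord_\ybs(\sum c^i) = L$, arranged so that (i) $\nsigma[\ybs]_0(C) = 0$ and (ii) when we pass to the chopped base repeatedly $L$ times, the relevant carry vector is forced and the resulting chopped position fails to be in the maximum system of the further-chopped base — so that tracing Theorem \ref{orgtarget4}(2) downward, $C\notin\sgFA<L'>$ for the appropriate $L'$, hence $C\in\sgCmax[\ybs, m]$. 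Concretely, for $m = 2$ one can try $c^0 = [1, 0, \ldots, 0]$ (a $1$ in digit $0$), $c^1$ chosen with $c^1_0 = \ybs[0] - 1$ and a single nonzero higher digit at position $L$ making the total sum divisible by $\ybsp[L]$; then $\nsigma[\ybs]_0(C) = 1 \oplus (\ybs[0]-1) = 0$ in $\ZZ/\ybs[0]$, while the carry into digit $1$ is $1$, and because $\ybs[L] \ge 3$ there is ``room'' for the chopped position $\Chopped C + r$ to land strictly inside $\sgCmax[\Chopped\ybs, 2]$ rather than in $\sgF^{\Chopped\ybs}$ — exactly as the computation in Examples \ref{orgtarget24} and \ref{orgtarget28} shows for $\ybs = (3,3,\ldots)$, $L=1$. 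For general $m \ge 2$ one pads the remaining coordinates with $0$ and invokes Theorem \ref{orgtarget3}(2) (restriction to a $2$-element subset) to reduce to $m = 2$.

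The main obstacle is the ``if'' direction's base-case digit computation showing $\nsigma[\ybs]_0(C) = 0$ whenever $\mord_\ybs(C) = 0$ and $\ybs[L] = 2$ for $L\ge 1$ and $\ord_\ybs(\sum c^i) \ge 1$: this is a genuine statement about the interaction of addition-with-carry in the mixed base and the digitwise sum, and it is precisely where the hypothesis ``$\ybs[L] = 2$ for $L \ge 1$'' is used in the ``if'' direction beyond digit $0$ (the induction through chopped bases needs each subsequent base to again be all-$2$). I expect to isolate this as a lemma: if $\ybs[1] = 2$ then for $C$ with $\mord_\ybs(C) = 0$, $\ord_\ybs(\sum_i c^i) \ge 1$ implies $\bigoplus_i c^i_0 = 0$ in $\range{\ybs[0]}$ and the carry vector out of digit $0$ lies in $\Carry(C)$ and reduces $\ord$ by exactly one — after which Theorem \ref{orgtarget4}(2) does the rest by induction. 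Conversely, the failure of this lemma when $\ybs[L]\ge 3$ for some $L\ge 1$ is what powers the counterexample; making the counterexample fully explicit for all such $\ybs$ (not just $(3,3,\ldots)$) and all $m\ge2$ is the remaining bookkeeping.
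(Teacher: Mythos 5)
Your overall architecture matches the paper's (induction on \(\max C\) through Theorem \ref{orgtarget4}(2) for the ``if'' direction, split on \(\mord_{\ybs}(C)=0\) versus \(\mord_{\ybs}(C)\ge 1\); an explicit two-coordinate element of \(\sgCmax[\ybs]\setminus\sgCord[\ybs]\) for the ``only if'' direction), but both directions go wrong at exactly the point where the real work happens. In the ``if'' direction, your key lemma is false as stated. The identity \(\bigoplus_i c^i_0 = 0\) needs no hypothesis on \(\ybs[1]\) at all (it is just \(\ybs[0]\mid\sum_i c^i\) read at digit \(0\)), so that is not where \(\ybs=(\ybs[0],2,2,\ldots)\) enters; it enters in the choice of \(r\in\Carry(C)\), and ``the carry vector out of digit \(0\)'' is the wrong choice. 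If \(r\) realizes the true carry, then \(\sum_i(\Chopped{c}^i+r^i)=\Chopped{\sum_i c^i}\), whose \(\ord_{\Chopped{\ybs}}\) equals \(\ord_{\ybs}(\sum_i c^i)-1\); when \(\ord_{\ybs}(\sum_i c^i)=1\) this is \(0=\mord_{\Chopped{\ybs}}(\Chopped{C}+r)\), so \(\Chopped{C}+r\in\sgCord[\Chopped{\ybs}]\) and the induction dies. Concretely, for \(\ybs=(2,2,\ldots)\) and \(C=(1,1)\) the true carry gives \(\Chopped{C}+r=(1,0)\) or \((0,1)\), neither of which lies in \(\sgF^{\Chopped{\ybs}}\); the \(r\) that work are \((0,0)\) and \((1,1)\). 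The paper instead observes that at least two coordinates have \(c^i_0\neq 0\) (since \(\sum_i c^i_0\equiv 0 \pmod{\ybs[0]}\) with not all of them zero), hence there are at least two freely toggleable carry bits, and chooses \(r\) so that the number of \(i\) with \((\Chopped{c}^i+r^i)_0\neq 0\) is even and positive; because \(\ybs[1]=2\) this forces \(\ord_{\Chopped{\ybs}}(\sum_i(\Chopped{c}^i+r^i))\ge 1 > 0 = \mord_{\Chopped{\ybs}}(\Chopped{C}+r)\) regardless of how large \(\ord_{\ybs}(\sum_i c^i)\) was. That parity engineering, rather than the true carry, is the missing idea.

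In the ``only if'' direction your explicit candidate also fails. With \(\ybs=(3,3,\ldots)\), \(L=1\), \(m=2\), your recipe gives \(C=(1,\,2+3k)\), e.g.\ \((1,5)=(\bnum{1},\bnum{2,1})\); but \(\nsigma[\ybs]<0>(C)=0\), \(\Carry(C)=\set{0,1}^2\), and \(\Chopped{C}+(1,1)=(0,1)+(1,1)=(1,2)\in\sgF^{\Chopped{\ybs}}\) because \(\nsigma[\Chopped{\ybs}]((0,0)+(1,2))=1\oplus 2=0=\nsigma[\Chopped{\ybs}]((0,0))\). Hence \(C\in\sgF^{\ybs}\), i.e.\ \(C\notin\sgCmax[\ybs]\). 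The nonzero high digit must sit at position \(L+1\), not \(L\): the paper takes \(C=(c^0_0+\ybsp[L+1],\,\ominus c^0_0,\,0,\ldots,0)\), so that after chopping down to the level where \(\ybs[L]\ge 3\) becomes the digit-\(0\) radix, both digit-\(0\) entries of the chopped element are \(0\) and the carries \(r^i\in\set{0,1}\) can contribute at most \(2<\ybs[L]\) to that digit, which blocks every nonzero \(r\) and forces \(r=(0,\ldots,0)\), leaving a weight-one element that is certainly in \(\sgCmax[\Chopped{\ybs}]\). This off-by-one placement is not bookkeeping; it is the precise mechanism by which \(\ybs[L]\ge 3\) obstructs the carry. (Your reduction to \(m=2\) by zero-padding is fine, but it follows directly from (\ref{orgtarget21}); Theorem \ref{orgtarget3}(2) concerns minimal systems and is not the relevant tool.)
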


\begin{remark}
 \comment{Rem.}
\label{sec:orgheadline27}
Blass, Fraenkel, and Guelman \cite{blass-how-1998} proved that if
\(\ybs = (2, 2, 2, \ldots)\), then \(\sgCmax[\ybs, m] = \sgCord[\ybs, m]\).
 
\end{remark}

\subsection{Organization}
\label{sec:orgheadline29}
This paper is organized as follows.
In Section \ref{orgtarget10}, we recall the concept of impartial games.
In Section \ref{orgtarget29}, we prove Theorem \ref{orgtarget2} that states that \(\sgCord[\ybs]\) is a Sprague-Grundy system of \(\nsigma[\ybs]\).
In Sections \ref{orgtarget30} and \ref{orgtarget31}, we investigate minimal systems and maximum systems, respectively.
The proofs of Theorems \ref{orgtarget3} and \ref{orgtarget19} are in Section \ref{orgtarget30},
and that of Theorems \ref{orgtarget4} and \ref{orgtarget22} are in Section \ref{orgtarget31}.

\section{Games}
\label{sec:orgheadline33}
\label{orgtarget10}
We recall the concept of impartial games. See \cite{berlekamp-Winning-2001,conway-numbers-2001} for details.

\comment{Games}
\label{sec:orgheadline31}
Let \(\sgC \subseteq \Nb^m \setminus \set{(0,\ldots,0)}\) and \(\Gamma = \Gamma(\Nb^m, \sgC)\).
An element of \(\Nb^m\) is called a \emph{position} of the game \(\Gamma\).
Let \(X\) and \(Y\) be two positions in \(\Gamma\).
If \(X - Y \in \sgC\), then \(Y\) is called an \emph{option} of \(X\) (in \(\Gamma\)).
If \(X - Y \in \Nb^m\), then \(Y\) is called a \emph{descendant} of \(X\).
The \emph{Sprague-Grundy value} \(\sg_{\Gamma}(X)\) of \(X\) is defined to be the minimum nonnegative integer \(n\)
such that \(X\) has no option \(Y\) with \(\sg_{\Gamma}(Y) = n\). The function \(\sg_{\Gamma} : \Nb^m \to \Nb\) is called the \emph{Sprague-Grundy function} of the game \(\Gamma\).

\comment{Cover}
\label{sec:orgheadline32}
In this paper, for \(h \in \Nb\),
we say that \(\sgC\) \emph{covers} \(X\) at \(h\) (with respect to \(\nsigma[\ybs]\))
if \(X - C \in \Nb^m\) and \(\nsigma[\ybs](X - C) = h\) for some \(C \in \sgC\), that is,
\(X\) has an option \(Y\) with \(\nsigma[\ybs](Y) = h\) in \(\Gamma\).
When \(\sgC\) is a singleton \(\set{C}\), we also say that \(C\) covers \(X\) at \(h\).
If \(\sgC\) covers \(X\) at every \(h\) with \(0 \le h < \nsigma[\ybs](X)\), then
we say that \(\sgC\) \emph{adequately covers} \(X\).
Observe that \(\sgC\) is a Sprague-Grundy system of the function \(\nsigma[\ybs]\) if and only if the following two conditions hold:
\phantomsection
\begin{description}
\item[{(SG1)}] \label{orgtarget32} \(\sgC\) does not cover \(X\) at \(\nsigma[\ybs](X)\) for every \(X \in \Nb^m\).
\end{description}
\phantomsection
\begin{description}
\item[{(SG2)}] \label{orgtarget33} \(\sgC\) adequately covers \(X\) for every \(X \in \Nb^m\).
\end{description}
It is now easy to see that (\ref{orgtarget9}) and (\ref{orgtarget21}) hold.

\section{Sprague-Grundy systems}
\label{sec:orgheadline61}
\label{orgtarget29}
We prove Theorem \ref{orgtarget2}.
In addition, we present the basic properties of Sprague-Grundy
systems of \(\nsigma[\ybs]\) to prove Theorem \ref{orgtarget3}.
\subsection{Proof of Theorem \texorpdfstring{\ref{orgtarget2}}{1.5}}
\label{sec:orgheadline38}
\label{orgtarget34}

\comment{SG1}
\label{sec:orgheadline34}
We first verify that \(\sgCord[\ybs]\) satisfies (\hyperref[orgtarget32]{SG1}).
Let \(C \in \sgCord[\ybs]\) and \(X \in \Nb^m\) with \(X - \ccC \in \Nb^m\).
Set \(Y = X - \ccC\),  \(h = \nsigma[\ybs](Y)\), and \(n = \nsigma[\ybs](X)\).
Let \(N = \mord_{\ybs}(C)\).
It suffices to show that \(h_N \neq n_N\).
By the definition of \(N\), we see that \(c^i_L = 0\) for every \(L < N\), and hence \(y^i_N = (x^i - c^i)_N =  x^i_N \ominus \ccc^i_N\). This implies that
\begin{equation}
\label{orgtarget35}
 h_N = y^0_N \oplus \cdots \oplus y^{m - 1}_N = x_N^0 \oplus \cdots \oplus x^{m - 1}_N \ominus (\ccc^0_N \oplus \cdots \oplus \ccc^{m - 1}_N) = n_N \ominus (\ccc^0_N \oplus \cdots \oplus \ccc^{m - 1}_N).
\end{equation}
Since \(C \in \sgCord[\ybs]\), it follows that \(N = \ord_{\ybs} (\sum \ccc^i)\), and hence that \((\sum \ccc^i)_N = \ccc^0_N \oplus \cdots \oplus \ccc^{m - 1}_N \neq 0\).
Combining this with (\ref{orgtarget35}), we see that \(h_N \neq n_N\).

\comment{SG2}
\label{sec:orgheadline35}
It remains to prove that \(\sgCord[\ybs]\) satisfies (\hyperref[orgtarget33]{SG2}).
Let us consider an example.
For \(X \in \Nb^m\), we write
\[
 X = \begin{bmatrix}
 x^0_0 & x^0_1 & \cdots & x^0_N  \\
 \vdots & \vdots & \ddots & \vdots \\
 x^{m - 1}_0 & x^{m - 1}_1 & \cdots & x^{m - 1}_N \\
 \end{bmatrix}
\]
if \(x^i_L = 0\) for every \(L > N\) and every \(i \in \Omega\).
 \begin{example}
 \comment{Exm. How to choose options}
\label{sec:orgheadline36}
\label{orgtarget36}
Let \(\ybs = (4,4,\ldots)\), and consider
\[
X =\begin{bmatrix}
 1 & 2 & 2 & 1 & 0 \\
 2 & 1 & 1 & 1 & 0 \\
 0 & 1 & 2 & 1 & 1 \\
\end{bmatrix}\in \Nb^3.
\]Set \(n = \nsigma[\ybs](X) = \bnum{3, 0, 1, 3, 1} \in \Nb\) and \(h = \bnum{0,1,2, 0, 1} \in \Nb\).
We verify that \(\sgCord[\ybs]\) covers \(X\) at \(h\).
Note that \(\max \set{L \in \NN : n_L \neq h_L} = 3\).
We can choose a descendant \(Y\) of \(X\) so that \(\nsigma[\ybs](Y) = h\) and
\[
Y =\begin{bmatrix}
 y^0_0 & y^0_1 & y^0_2 & y_3^0 & x^0_4 \\
 x^1_0 & x^1_1 & x^1_2 & y_3^1 & x^1_4 \\
 x^2_0 & x^2_1 & x^2_2 & y_3^2 & x^2_4 \\
\end{bmatrix}=
\begin{bmatrix}
 y^0_0 & y^0_1 & y^0_2 & y_3^0 & 0 \\
 2 & 1 & 1 & y_3^1 & 0 \\
 0 & 1 & 2 & y_3^2 & 1 \\
\end{bmatrix}.
\]Indeed, let \(y^0 = \bnum{2, 3, 3, 0, 0}\) and \(y^1_3 = y^2_3 = 0\). Then \(\nsigma[\ybs](Y) = h\).
Since
\[
X - Y =\begin{bmatrix}
 3 & 2 & 2 & 0 \\
 0 & 0 & 0 & 1 \\
 0 & 0 & 0 & 1 \\
\end{bmatrix}\in \sgCord[\ybs],
\]it follows that \(\sgCord[\ybs]\) covers \(X\) at \(h\).
 
\end{example}

\comment{stronger}
\label{sec:orgheadline37}
It is now easy to verify that \(\sgCord[\ybs]\) adequately covers every position in \(\Nb^m\).
Instead of proving this directly, we present a slightly stronger result in the next subsection.

\subsection{Smaller Sprague-Grundy systems}
\label{sec:orgheadline49}
\label{orgtarget37}
Example \ref{orgtarget36} suggests that we may obtain a smaller Sprague-Grundy system, which will be used to study minimal systems, than \(\sgCord[\ybs]\).
Let \(\delta(n) = 1\) if \(n = 0\), and \(\delta(n) = 0\) if \(n \neq 0\).
For \(C \in \Nb^m\), let \(Nj(C)\) denote the set of \((N, j) \in \NN^2\) satisfying the following three conditions:
\phantomsection
\begin{description}
\item[{(C1)}] \label{orgtarget38} \(\displaystyle c^i = \begin{cases} 
 c^j_{\le N}  = \bnum{c^j_0, \ldots, c^j_N} \neq 0 & \tif i = j, \\ 
 c^i_N \ybsp[N]  = \bnum{0, \ldots, 0, c^i_N} & \tif i \neq j. \end{cases}\)
\end{description}
\phantomsection
\begin{description}
\item[{(C2)}] \label{orgtarget39} \(\sum_{i} c^i_N + \delta(c^j_N)  \le \ybs[N] - 1\).
\end{description}
\phantomsection
\begin{description}
\item[{(C3)}] \label{orgtarget40} \(c^i_N \le c^{j}_N + 1\) for every \(i \in \Omega\).
\end{description}
For example, if \(X\) and \(Y\) are as in Example \ref{orgtarget36}, then \(Nj(X - Y) = \set{(3, 0)}\).
Define 
\[
 \sgCnmin[\ybs] = \sgCnmin[\ybs, m] = \Set{C \in \Nb^m : Nj(C) \neq \emptyset}.
\]
We will prove that \(\sgCnmin[\ybs] \subseteq \sgCord[\ybs]\) in Lemma \ref{orgtarget41} and that \(\sgCnmin[\ybs] \in \Delta(\nsigma^{\ybs})\) in Lemma \ref{orgtarget42},
which means that \(\sgCord[\ybs] \in \Delta(\nsigma^{\ybs})\).

 \begin{example}
 \comment{Exm. cCnmin}
\label{sec:orgheadline39}
\label{orgtarget43}
Let \(C \in \sgCnmin[\ybs]\), and suppose that \((N, 0) \in Nj(C)\).
Then \(c^0 \neq 0\) and
\begin{equation}
\label{orgtarget44}
C =\begin{bmatrix}
 c^0_0 & \cdots & c^0_{N - 1} & c^0_{N} \\
 0 & \cdots & 0 & c^1_{N} \\
 \vdots & \ddots & \vdots & \vdots \\
 0 & \cdots & 0 & c^{m - 1}_N \\
\end{bmatrix}.
\end{equation}
For example, let \(\ybs = (5,5,\ldots)\) and consider
\[
C^{(0)} =\begin{bmatrix}
 1 & 0 \\
 0 & 1 \\
 0 & 1 \\
\end{bmatrix}\hspace{-0.3em},\
C^{(1)} =\begin{bmatrix}
 0 & 1 \\
 0 & 0 \\
 0 & 0 \\
\end{bmatrix}\hspace{-0.3em},\
C^{(2)} =\begin{bmatrix}
 0 & 2 \\
 0 & 1 \\
 0 & 1 \\
\end{bmatrix}\hspace{-0.3em},\
\]\\
\[
D^{(0)} =\begin{bmatrix}
 1 & 1 \\
 1 & 1 \\
 0 & 1 \\
\end{bmatrix}\hspace{-0.3em},\
D^{(1)} =\begin{bmatrix}
 1 & 3 \\
 0 & 2 \\
 0 & 0 \\
\end{bmatrix}\hspace{-0.3em}.
\]By definition,
\[ 
 Nj(C^{(0)}) = \Set{(1,0)},\quad  Nj(C^{(1)}) = \Set{(L ,0) : L \ge 1},
\]
and 
\[ 
Nj(C^{(2)}) = \Set{(1, j) :0 \le j \le 2}.
\]
Hence \(C^{(0)}, C^{(1)}, C^{(2)} \in \sgCnmin[\ybs]\). It is also easy to check that \(D^{(0)}, D^{(1)} \not \in \sgCnmin[\ybs]\).
 
\end{example}

\begin{remark}
 \comment{Rem. Nandj}
\label{sec:orgheadline40}
\label{orgtarget45}
For \(C \in \Nb^m\), define
\[
 N(C) = \set{N \in \NN : (N, j) \in Nj(C) \tforsome j \in \NN}
\]
and 
\[
 j(C) = \set{j \in \NN : (N, j) \in Nj(C) \tforsome N \in \NN}.
\]
Then \(Nj(C) = N(C) \times j(C)\).
More precisely,
\[
 Nj(C) = \begin{cases}
  \set{L \in \NN : L \ge N} \times \set{j} \tforsome N, j \in \NN & \tif \wt(C) = 1, \\
  \set{N} \times J \tforsome N \in \NN  \text{\ \ and some\ \ } J \subseteq \Omega & \tif \wt(C) \neq 1.
 \end{cases}
\]
Indeed, suppose that \(\wt(C) = 1\), and choose \(j \in \Omega\) such that \(c^j \neq 0\).
Set \(N = \max \{L \in \NN : c^j_L \neq 0 \}\).
Then \(Nj(C) = \set{L \in \NN : L \ge N} \times \set{j}\).
Next, suppose that \(\wt(C) \neq 1\).
If \(Nj(C) = \emptyset\), then \(Nj(C) = \emptyset = \set{0} \times \emptyset\).
Suppose that \(Nj(C) \neq \emptyset\), and let \((N, j) \in Nj(C)\). Then \(c^j \neq 0\), so \(\wt(C) \ge 2\) since \(\wt(C) \neq 1\).
Hence \(c^i_N \neq 0\) for some \(i \neq j\) by (\hyperref[orgtarget38]{C1}).
By (\hyperref[orgtarget38]{C1}) again, we see that \(N(C) = \set{N}\) and \(Nj(C) = \set{N} \times J\) for some \(J \subseteq \Omega\).
 
\end{remark}

 \begin{lemma}
 \comment{Lem. cCnmin}
\label{sec:orgheadline41}
\label{orgtarget41}
\(\sgCnmin[\ybs, m] \subseteq \sgCord[\ybs,m]\).
 
\end{lemma}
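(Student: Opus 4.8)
The plan is to take an arbitrary \(C \in \sgCnmin[\ybs]\), fix a pair \((N,j) \in Nj(C)\) (which exists by definition of \(\sgCnmin[\ybs]\)), and verify directly that \(C\) satisfies the defining condition of \(\sgCord[\ybs]\), namely \(\ord_{\ybs}\big(\sum_{i \in \Omega} c^i\big) = \mord_{\ybs}(C)\). By the structure imposed by (\hyperref[orgtarget38]{C1}), every \(c^i\) has all digits below position \(N\) equal to zero except possibly \(c^j\), and in the case \(i = j\) we have \(c^j = c^j_{\le N} \neq 0\). I would split into the two cases coming from Remark \ref{orgtarget45}: \(\wt(C) = 1\) and \(\wt(C) \neq 1\).

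First I would compute \(\mord_{\ybs}(C)\). If \(\wt(C) = 1\), then only \(c^j \neq 0\), and by (\hyperref[orgtarget38]{C1}) we have \(c^j = c^j_{\le N}\) with \(c^j_N\) possibly zero; here \(\mord_{\ybs}(C) = \ord_{\ybs}(c^j) \le N\), and since \(c^j = \sum_i c^i\) the equality \(\ord_{\ybs}(\sum c^i) = \mord_{\ybs}(C)\) is immediate. If \(\wt(C) \neq 1\), then by Remark \ref{orgtarget45} there is some \(i \neq j\) with \(c^i_N \neq 0\), hence \(\ord_{\ybs}(c^i) = N\) for that \(i\), while \(c^j\) has \(\ord_{\ybs}(c^j) \le N\) and every other \(c^{i'}\) has \(\ord_{\ybs}(c^{i'}) \ge N\); therefore \(\mord_{\ybs}(C) = N\) (using \(\ord_{\ybs}(c^j) \le N\) from \(c^j \neq 0\) and \(c^j = c^j_{\le N}\)). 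So in this case it remains to check \(\ord_{\ybs}(\sum_i c^i) = N\), i.e. that \(\ybsp[N]\) divides \(\sum c^i\) but \(\ybsp[N+1]\) does not.

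The key computation is the digit of \(\sum_i c^i\) at position \(N\). Since all digits of all \(c^i\) strictly below \(N\) vanish except those of \(c^j = c^j_{\le N}\), which contribute \(c^j_{<N} := \sum_{L < N} c^j_L \ybsp[L] < \ybsp[N]\), the sum \(\sum_i c^i\) equals \(c^j_{<N} + \ybsp[N]\big(\sum_i c^i_N + (\text{stuff at positions} > N)\big)\). Because \(0 \le c^j_{<N} < \ybsp[N]\), divisibility of \(\sum c^i\) by \(\ybsp[N]\) is equivalent to \(c^j_{<N} = 0\); but \(c^j \neq 0\) together with (\hyperref[orgtarget38]{C1}) in the case \(\wt(C) \neq 1\) does not force \(c^j_{<N} = 0\) — so I must be careful here. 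Actually, when \(\wt(C) \neq 1\) we only know \(c^j \neq 0\); \(c^j_{<N}\) may be nonzero, in which case \(\mord_{\ybs}(C) = \ord_{\ybs}(c^j) < N\) and then \(\sum c^i\) is not divisible by \(\ybsp[N]\) but is divisible by \(\ybsp[\mord_{\ybs}(C)]\), and one checks its digit at position \(\mord_{\ybs}(C)\) equals \(c^j_{\mord_{\ybs}(C)} \neq 0\) (all other \(c^i\) vanish at positions below \(N\)). So in every subcase the lowest nonzero digit of \(\sum_i c^i\) sits exactly at position \(\mord_{\ybs}(C)\).

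Finally, to pin down \(\ord_{\ybs}(\sum c^i) = \mord_{\ybs}(C) =: M\) rather than something larger, I need the digit of \(\sum_i c^i\) at position \(M\) to be nonzero. When \(M < N\) this digit is \(c^j_M \neq 0\) as just noted. When \(M = N\) (so \(c^j_{<N} = 0\)), this digit is \(\big(\sum_i c^i_N\big) \bmod \ybs[N]\), and (\hyperref[orgtarget39]{C2}) gives \(\sum_i c^i_N + \delta(c^j_N) \le \ybs[N] - 1\); since in the \(\wt(C) \neq 1\) case some \(c^i_N \neq 0\), we get \(1 \le \sum_i c^i_N \le \ybs[N]-1\), so no carry occurs and the digit is \(\sum_i c^i_N \neq 0\). (Condition (\hyperref[orgtarget40]{C3}) is not needed for this lemma; it will be relevant only later for the (SG2)-type argument.) Assembling these cases yields \(\ord_{\ybs}(\sum_i c^i) = \mord_{\ybs}(C)\), hence \(C \in \sgCord[\ybs]\). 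The main obstacle is purely bookkeeping: correctly tracking which digits of which \(c^i\) can be nonzero below and at position \(N\), and invoking (\hyperref[orgtarget39]{C2}) exactly at the point where a carry out of position \(N\) must be excluded.
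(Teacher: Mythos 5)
Your proof is correct and follows essentially the same route as the paper's: both reduce the claim to showing that the digit of \(\sum_i c^i\) at position \(M = \mord_{\ybs}(C)\) is nonzero, handle \(M < N\) by observing that only \(c^j\) can contribute below position \(N\) (so the digit is \(c^j_M \neq 0\)), and handle \(M = N\) by invoking (C2) exactly to rule out a wrap-around at digit \(N\). The only wrinkle is the momentary misstatement that \(\mord_{\ybs}(C) = N\) whenever \(\wt(C) \neq 1\), which you correctly retract in the following sentence, so the assembled argument is sound.
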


\begin{proof}
 \comment{Proof.}
\label{sec:orgheadline42}
For \(C \in \sgCnmin[\ybs, m]\),
we show that
\[
 \ord_{\ybs} \left(\sum c^i \right) = \mord_{\ybs}(C).
\]
Let \(M = \mord_{\ybs}(C)\).
Since \(\ord_{\ybs} \left(\sum c^i \right) \ge M\) and \((\sum c^i)_M = c^0_M \oplus \cdots \oplus c^{m - 1}_M\),
it suffices to show that \(c^0_M \oplus \cdots \oplus c^{m -1}_M \neq 0\).
Let \((N, j) \in Nj(C)\). Then \(N \ge M\) since, by (\hyperref[orgtarget38]{C1}), \(c^i_L = 0\) for every \(L > N\) and every \(i \in \Omega\).
Suppose that \(N > M\). By (\hyperref[orgtarget38]{C1}), if \(i \neq j\), then \(c^i_M\) = 0, and hence  \(c^j_M \neq 0\).
This implies that \(c^0_M \oplus \cdots \oplus c^{m -1}_M = c^j_M \neq 0\).
Suppose that \(N = M\). Then \(\sum c^i_M \le \ybs[M] - 1\) by (\hyperref[orgtarget39]{C2}).
Therefore \(c^0_M \oplus \cdots \oplus c^{m - 1}_M = \sum c^i_M \neq 0\).

\end{proof}

\comment{Connect}
\label{sec:orgheadline43}
We now show that \(\sgCnmin[\ybs]\) is a Sprague-Grundy system of \(\nsigma[\ybs]\).
We present a slightly stronger result than this to investigate minimal systems in Section \ref{orgtarget30}.
 \begin{lemma}
 \comment{Lem. exist}
\label{sec:orgheadline44}
\label{orgtarget42}
Let \(X \in \Nb^m, n = \nsigma[\ybs](X)\), and \(h \in \Nb\) with \(h < n\). Set \(N = \max \{L \in \NN : \pexp{\ccn}[L] \neq \pexp{\cch}[L]\}\). 
Choose \(j \in \Omega\) so that \(x^j_{\le N} \ge x^i_{\le N}\) for every \(i \in \Omega\).
Then there exists \(\ccC \in \sgCnmin[\ybs]\) satisfying the following four conditions:
\phantomsection
\begin{description}
\item[{(A1)}] \label{orgtarget46} \(\nsigma[\ybs](X - \ccC) = h\).
\end{description}
\phantomsection
\begin{description}
\item[{(A2)}] \label{orgtarget47} \((N, j) \in Nj(\ccC)\).
\end{description}
\phantomsection
\begin{description}
\item[{(A3)}] \label{orgtarget48} \(X_{\le N} - C \in \Nb^m\), where \(X_{\le N} = (x^0_{\le N}, \ldots, x^{m - 1}_{\le N})\).
\end{description}
\phantomsection
\begin{description}
\item[{(A4)}] \label{orgtarget49} \((x^{j} - \ccc^{j})_N < x^{j}_N\).
\end{description}
In particular, \(\sgCnmin[\ybs]\) is a Sprague-Grundy system of \(\nsigma[\ybs]\).
 
\end{lemma}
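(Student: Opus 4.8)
The plan is to construct the required $C$ explicitly, one mixed-radix digit at a time, and then read off (A1)--(A4) and the covering property. Since $h < n$ and $N$ is the top digit where $n$ and $h$ differ, the mixed-radix order gives $h_N < n_N$, so $n_N \ge 1$; writing $a_i = x^i_N$, the distinguished coordinate $j$ (for which $x^j_{\le N}$ is maximal) therefore satisfies $a_j = \max_{i \in \Omega} a_i$, since $a_i > a_j$ would already force $x^i_{\le N} \ge a_i \ybsp[N] > a_j \ybsp[N] + x^j_{<N} = x^j_{\le N}$. Put $t = n_N - h_N \in \{1,\dots,\ybs[N]-1\}$ and note $t \le \sum_{i\in\Omega} a_i$ (if $\sum_i a_i < \ybs[N]$ then $\sum_i a_i = n_N \ge t$; otherwise $\sum_i a_i \ge \ybs[N] > t$).

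I would build $Y = X - C$ as follows. Leave $x^i_L$ unchanged whenever $i \ne j$ and $L \ne N$, and whenever $i = j$ and $L > N$; then digit $L$ of $\nsigma[\ybs](Y)$ is $n_L = h_L$ for every $L > N$. For $L < N$ the only adjustable digit is $y^j_L$, which I set to $h_L \ominus \bigoplus_{i\ne j} x^i_L$, making digit $L$ of the Nim sum equal $h_L$; let $y^j_{<N}$ be the resulting low part of pile $j$ and put $b = 1$ if $y^j_{<N} > x^j_{<N}$ and $b = 0$ otherwise, so that when $b = 1$ the subtraction $c^j = x^j_{\le N} - y^j_{\le N}$ carries exactly one borrow into digit $N$. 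At digit $N$ I want $\bigoplus_i y^i_N = h_N$; setting $y^i_N = a_i - c^i_N$ for $i \ne j$ and $y^j_N = a_j - c^j_N - b$, this becomes the requirement of choosing nonnegative integers $c^i_N$ with $\sum_i c^i_N = t - b$, with $c^i_N \le a_i$ for $i\ne j$, with $c^j_N \le a_j - b$, with $c^j_N \ge 1$ when $b = 0$, and with $c^i_N \le c^j_N + 1$ for all $i$. This distribution step is the combinatorial core, and it is where the choice of $j$ enters: since $a_j = \max_i a_i$, as $c^j_N = p$ ranges over its allowed values and each $c^i_N$ ($i \ne j$) over $[0,\min(a_i,p+1)]$, the attainable values of $\sum_i c^i_N$ sweep out a single interval --- the ranges $[p,\;p+\sum_{i\ne j}\min(a_i,p+1)]$ for consecutive $p$ leave no gap because each has right endpoint at least its left endpoint --- namely $[1,\sum_i a_i]$ if $b=0$ and $[0,\sum_i a_i - 1]$ if $b=1$. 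As $1 \le t \le \sum_i a_i$, the target $t-b$ lies in this interval, so a valid $(c^i_N)_i$ exists.

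It then remains to verify the conclusions for $C = X - Y$. Condition (A1) holds digit by digit; at digit $N$, $\bigoplus_i y^i_N \equiv \sum_i a_i - t \equiv n_N - t = h_N \pmod{\ybs[N]}$. For (A2), i.e.\ $(N,j)\in Nj(C)$: for $i \ne j$ the subtraction $x^i - c^i_N\ybsp[N]$ is clean since $c^i_N \le a_i$, so $c^i = c^i_N\ybsp[N]$, giving the $i\ne j$ part of (C1); for $i = j$, $c^j_N + b \ge 1$ yields $y^j_N < a_j$, hence $y^j_{\le N} < x^j_{\le N}$ and $c^j = c^j_{\le N} \ne 0$, and a short borrow computation shows digit $N$ of $c^j$ equals $c^j_N$; (C3) is built into the construction; and (C2) holds because $\sum_i c^i_N + \delta(c^j_N)$ is $t-b \le \ybs[N]-1$ when $c^j_N \ge 1$ and is $(t-1)+1 = t \le \ybs[N]-1$ when $c^j_N = 0$, the latter occurring only when $b = 1$. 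Condition (A3) says $y^i_{\le N}\ge 0$, which is clear from $c^i_N \le a_i$ and $y^j_{\le N} \ge 0$, and (A4) is $y^j_N < x^j_N$, again from $c^j_N + b \ge 1$. Finally $\sgCnmin[\ybs] \subseteq \sgCord[\ybs]$ by Lemma \ref{orgtarget41}, so $\sgCnmin[\ybs]$ inherits (SG1); and since for every $X$ and every $h < \nsigma[\ybs](X)$ the $C$ just built satisfies $X - C \in \Nb^m$ (by (A3), as $C$ leaves the digits above $N$ untouched) and $\nsigma[\ybs](X-C) = h$, the set $\sgCnmin[\ybs]$ adequately covers every $X$, which is (SG2); hence $\sgCnmin[\ybs] \in \Delta(\nsigma[\ybs])$.

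I expect the main obstacle to be the digit-$N$ distribution in the borrow case $b = 1$: one must at once meet the congruence $\sum_i c^i_N \equiv n_N - h_N - b \pmod{\ybs[N]}$, the caps $c^i_N \le a_i$ and $c^j_N \le a_j - b$, the balancing bound $c^i_N \le c^j_N + 1$, and the $\delta$-sensitive inequality (C2), whose right-hand side effectively shrinks by one when $c^j_N = 0$; that all of these can hold simultaneously relies on $j$ being chosen to maximise $a_j$ together with the bound $t \le \sum_i a_i$. Keeping track of how the borrow shifts both digit $N$ of $c^j$ and digit $N$ of the Nim sum is the step where a proof most easily goes astray.
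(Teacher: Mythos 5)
Your construction is correct and is essentially the paper's own proof: both build the option $Y$ by altering only the digits below $N$ of pile $j$ and digit $N$ of every pile, then distribute the digit-$N$ decrement $n_N \ominus h_N$ among the piles subject to the maximality of pile $j$, with the borrow into digit $N$ accounting for the $\delta(c^j_N)$ term in (C2); the only differences are that you parametrize by the digits $c^i_N$ with the borrow $b$ fixed in advance (the paper parametrizes by the decrements $x^i_N - y^i_N$ and introduces the borrow afterward) and that you spell out the interval-sweeping argument for the existence of the distribution, which the paper dispatches with a footnote example.
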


\begin{proof}
 \comment{Proof.}
\label{sec:orgheadline45}
We first construct a descendant \(Y\) of \(X\) such that \(\nsigma[\ybs](Y) = h\) in the same way as in Example \ref{orgtarget36}.
By rearranging \(x^i\), we may assume that \(j = 0\).
Since \(\cch < \ccn\), we see that
\(\pexp{\cch}[N] < \pexp{\ccn}[N] = \pexp{x^0}[N] \oplus \cdots \oplus \pexp{x^{\ccm - 1}}[N]\).
Hence we can choose \((y^0_N, \ldots, y^{m - 1}_N) \in \range{\ybs[N]}^m\) satisfying the following two conditions:
\begin{enumerate}
\item \(0 \le x^i_N - y^i_N \le x^0_N - y^0_N \;\; \tforevery i \in \Omega\).
\item \(\sum_{i} (x^i_N - y^i_N) = n_N \ominus h_N\). \footnote{For example, if \(\ybs[N] = 6, m = 3,  (x^0_N, x^1_N, x^2_N) = (2,2,1)\), and \(h_N = 2\), then \((0,1,1)\) satisfies (1) and (2).}
\end{enumerate}
Then \(y^0_N < x^0_N\) and
\[
 y^0_N \oplus \cdots \oplus y^{m - 1}_N = x^0_N \oplus \cdots \oplus x^{m - 1}_N \ominus n_N \oplus h_N = h_N.
\]
Consider
\[
Y =\begin{bmatrix}
 x^0_0  \ominus \ccn_0  \oplus  \cch_0 & \cdots & x^0_{N - 1}  \ominus n_{N - 1}  \oplus  h_{N - 1} & y^0_N & x^0_{N + 1} & x^0_{N + 2} & \cdots \\
 x^1_0 & \cdots & x^1_{N - 1} & y^1_N & x^1_{N + 1} & x^1_{N + 2} & \cdots \\
 \vdots & \ddots & \vdots & \vdots & \vdots & \vdots & \ddots \\
 x^{m - 1}_0 & \cdots & x^{m - 1}_{N - 1} & y^{m - 1}_N & x^{m - 1}_{N + 1} & x^{m - 1}_{N + 2} & \cdots \\
\end{bmatrix}\in \Nb^m.
\]Then \(Y\) is a descendant of \(X\) with \(\nsigma[\ybs](Y) = \cch\).

Let \(C = X - Y\). 
We next verify that \((N, 0) \in Nj(C)\).

(\hyperref[orgtarget38]{C1}). Since \(c^0 = x^0 - y^0 = x^0_{\le N} - y^0_{\le N} \neq 0\) and \(c^i = x^i - y^i = (x^i_N - y^i_N) \ybsp[N]\) for every \(i > 0\), (\hyperref[orgtarget38]{C1}) holds.

(\hyperref[orgtarget39]{C2}). For \(i > 0\), we know that \(c_N^i = (x^i - y^i)_N = x_N^i - y^i_N\).
Write \(c_N^0 = x_N^0 - y_N^0 - \epsilon^0_N\).
Then \(\epsilon^0_N \in \set{0,1}\) and \(\epsilon^0_{N} = 1\) when there is a borrow in the \(N\)th digit in the calculation of \(x^0 - y^0\) in base \(\ybs\).
By (2),
\[
 \sum_{i = 0}^{m - 1} c_N^i = \sum_{i = 0}^{m - 1} (x^i_N - y^i_N) - \epsilon^0_N = n_N \ominus h_N - \epsilon^0_N \le \ybs[N] - 1 - \epsilon^0_N.
\]
Thus (\hyperref[orgtarget39]{C2}) holds when \(c_N^0 \neq 0\).
Suppose that \(c_N^0 = 0\).
Then \(\epsilon^0_N = 1\) since \(c_N^0 = x^0_N - y^0_N - \epsilon^0_N\) and \(x^0_N - y^0_N \ge 1\).
Hence (\hyperref[orgtarget39]{C2}) holds.

(\hyperref[orgtarget40]{C3}). This is trivial when \(i = 0\).
For \(i > 0\), by (1), \(c^i_N  = x^i_N - y^i_N \le x^0_N - y^0_N =  c^0_N + \epsilon^0_N \le c^0_N + 1\), and hence (\hyperref[orgtarget40]{C3}) holds.

Therefore \((N, 0) \in Nj(C)\) and  \(C \in \sgCnmin[\ybs]\).

Finally, we show (\hyperref[orgtarget46]{A1})-(\hyperref[orgtarget49]{A4}).
We have already shown (\hyperref[orgtarget46]{A1}) and (\hyperref[orgtarget47]{A2}).
Since \(c^i = x^i - y^i = x^i_{\le N} - y^i_{\le N} \le x^i_{\le N}\), (\hyperref[orgtarget48]{A3}) holds.
Since \((x^0 - c^0)_N = y^0_N  < x^0_N\), (\hyperref[orgtarget49]{A4}) also holds.

\end{proof}

\comment{connect}
\label{sec:orgheadline46}
For a general Sprague-Grundy system \(\sgC \in \Delta(\nsigma[\ybs])\), we can obtain a weaker result than Lemma \ref{orgtarget42}.
 \begin{lemma}
 \comment{Lem. under}
\label{sec:orgheadline47}
\label{orgtarget50}
Let \(X, n, h\), and \(N\) be as in Lemma \ref{orgtarget42}.
If \(\sgC \in \Delta(\nsigma[\ybs])\), then there exists \(C \in \sgC\) satisfying the following two conditions:
\begin{description}
\item[{(A1)}] \(\nsigma[\ybs](X - C) = h\).
\item[{(A3)}] \(X_{\le N} - C \in \Nb^m\).
\end{description}
 
\end{lemma}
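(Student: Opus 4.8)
The plan is to reduce to Lemma \ref{orgtarget42} by a digit-by-digit "matching" argument on the top digit $N$, and then to exploit (SG2) together with a pigeonhole/induction on how far down the descendant must reach. First I would recall that since $\sgC \in \Delta(\nsigma[\ybs])$, property (SG2) says $\sgC$ adequately covers $X$, so there is \emph{some} $C \in \sgC$ with $X - C \in \Nb^m$ and $\nsigma[\ybs](X - C) = h$; this is exactly (A1). The entire content of the lemma is therefore the extra condition (A3), namely that the move $C$ can be chosen so that it only touches digits $\le N$, i.e. $X_{\le N} - C \in \Nb^m$ (equivalently $c^i_L = 0$ for $L > N$ and $c^i_{\le N} \le x^i_{\le N}$ for all $i$). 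So the real claim is: among all elements of $\sgC$ witnessing the cover of $X$ at $h$, at least one is "supported on digits $\le N$".

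The key step is to set up an induction on $N$ (or, equivalently, on $n - h$ or on a suitable truncation). Consider the truncated position $X' = X_{\le N}$, whose $\nsigma[\ybs]$-value is $n' := \nsigma[\ybs](X_{\le N})$, and note $n'_L = n_L$ for $L \le N$ while $n'_L = 0$ for $L > N$; similarly set $h' = h_{\le N}$, so $h'_L = h_L$ for $L \le N$. Since $N = \max\{L : n_L \neq h_L\}$ we have $h' < n' = n_{\le N}$, and $N$ is again the top disagreement digit for the pair $(n', h')$. Now the point is that the witnessing move inside $\Gamma(\Nb^m, \sgC)$ at the \emph{truncated} position $X_{\le N}$ — which exists by (SG2) applied to $X_{\le N}$ — automatically satisfies $X_{\le N} - C \in \Nb^m$, giving (A3); the task is to transfer this back to $X$ itself, i.e. to show that the \emph{same} $C \in \sgC$ also covers $X$ at $h$. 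This transfer is where the mixed-base digit structure does the work: writing $X = X_{\le N} + X_{> N}$ with $X_{> N}$ supported on digits $> N$ and $C$ supported on digits $\le N$, one checks by the digitwise definition of $\oplus$ and $\ominus$ that $\nsigma[\ybs](X - C)$ and $\nsigma[\ybs](X_{\le N} - C)$ agree in all digits $\le N$ and that digits $> N$ of $\nsigma[\ybs](X - C)$ equal those of $n = \nsigma[\ybs](X)$, which equal those of $h$ by the choice of $N$; hence $\nsigma[\ybs](X - C) = h$. The subtle arithmetic fact to verify carefully is that subtracting $C$ (supported on digits $\le N$) from $X$ produces borrows that never propagate above digit $N$ once we know $X_{\le N} - C \in \Nb^m$: indeed $(X - C)_{> N} = X_{> N}$ since there is no borrow out of digit $N$, because $x^i_{\le N} - c^i_{\le N} \ge 0$ already absorbs everything.

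The main obstacle I anticipate is precisely this borrow-propagation bookkeeping in the mixed base: one must argue that the descendant $Y = X - C$ with $C$ supported on digits $\le N$ has $y^i = x^i_{>N} \cdot(\text{identity on high digits}) + (x^i_{\le N} - c^i_{\le N})$, so that the high digits of $Y$ and of $X$ literally coincide, and that therefore $\nsigma[\ybs](Y)_L = \nsigma[\ybs](X)_L = h_L$ for all $L > N$ while the low digits are handled exactly as in the first half of the proof of Theorem \ref{orgtarget2} (equation (\ref{orgtarget35})). A clean way to avoid doing this twice is: apply (SG2) to the truncated position $X_{\le N}$ to get $C \in \sgC$ with $X_{\le N} - C \in \Nb^m$ and $\nsigma[\ybs](X_{\le N} - C) = h_{\le N}$ (this already gives (A3)), then observe $X - C = (X_{\le N} - C) + X_{>N}$ with the two summands supported on disjoint digit ranges, so $\nsigma[\ybs](X - C) = \nsigma[\ybs](X_{\le N}-C) + \nsigma[\ybs](X_{>N}) = h_{\le N} + n_{>N} = h_{\le N} + h_{>N} = h$, using $n_L = h_L$ for $L > N$. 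That disposes of (A1) and (A3) simultaneously, and no delicate borrow analysis beyond "digits $\le N$ and digits $> N$ don't interact" is needed.
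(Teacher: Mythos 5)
Your final ``clean way'' is exactly the paper's proof: apply (SG2) to the truncated position \(X_{\le N}\) (using \(\nsigma[\ybs](X_{\le N}) = n_{\le N} > h_{\le N}\)) to get \(C\) with \(X_{\le N} - C \in \Nb^m\) and \(\nsigma[\ybs](X_{\le N}-C) = h_{\le N}\), then use \(n_L = h_L\) for \(L > N\) to conclude \(\nsigma[\ybs](X - C) = h\). The earlier speculation about induction on \(N\) and borrow propagation is unnecessary, but the argument you settle on is correct and identical in substance to the paper's.
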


\begin{proof}
 \comment{Proof.}
\label{sec:orgheadline48}
Since \(\nsigma[\ybs](X_{\le N}) = n_{\le N} > h_{\le N}\),
there exists \(C \in \sgC\) such that \(X_{\le N} - C \in \Nb^m\) and \(\nsigma[\ybs](X_{\le N} - C) = h_{\le N}\).
Since \(h_{L} = n_{L}\) for every \(L \ge N + 1\), it follows that \(\nsigma[\ybs](X - C) = h\).

\end{proof}

\subsection{Properties of Sprague-Grundy systems}
\label{sec:orgheadline60}
\label{orgtarget51}
To prove Theorem \ref{orgtarget3}, we will present the basic properties of Sprague-Grundy systems of \(\nsigma[\ybs]\).

If \(\sgC \in \Delta(\nsigma[\ybs])\) and \(X \in \Nb^m\), then \(\sgC\) covers \(X\) at every \(h\) with \(0 \le h < \nsigma[\ybs](X)\).
The next lemma ensures that \(\sgC\) covers \(X\) also at some \(h\) with \(h > \nsigma[\ybs](X)\).
 \begin{lemma}
 \comment{Lem. over}
\label{sec:orgheadline50}
\label{orgtarget52}
Let \(\sgC\) be a subset of \(\Nb^m\) satisfying {\normalfont{(\hyperref[orgtarget33]{SG2})}}.
Let \(X \in \Nb^m, n = \nsigma[\ybs](X), h \in \Nb\) with \(h \neq n\), and \(N = \max \{L \in \NN : \ccn_L \neq \cch_L\}\).
If \(\sum_{i} x^i_N \ge \ybs[N] - 1\), then \(\sgC\) covers \(X\) at \(\cch\).
 
\end{lemma}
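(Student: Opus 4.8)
The plan is to reduce the case $h < n$ to (\hyperref[orgtarget33]{SG2}) and to handle $h > n$ by passing to an auxiliary descendant of $X$ to which Lemma \ref{orgtarget50} applies. If $h < n$, then $0 \le h < \nsigma[\ybs](X)$, so (\hyperref[orgtarget33]{SG2}) already gives that $\sgC$ covers $X$ at $h$. So assume $h > n$. Since $n$ and $h$ agree in all digits above $N$ and differ at digit $N$, the assumption $h > n$ yields $h_N > n_N$; in particular $n_N \le \ybs[N] - 2$, and the hypothesis gives $\sum_i x^i_N \ge \ybs[N] - 1 \ge n_N + 1$.

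Next I would build the auxiliary position. Put $d^* = n_N + 1 \in \set{1, \ldots, \ybs[N] - 1}$. Since $d^* \le \sum_i x^i_N$, I can choose integers $d_i$ with $0 \le d_i \le x^i_N$ for every $i$ and $\sum_i d_i = d^*$; then $d_0 \oplus \cdots \oplus d_{m - 1} = d^*$ in $\range{\ybs[N]}$. Let $P = (x^i - d_i \ybsp[N])_i$; this is a descendant of $X$ with $p^i_N = x^i_N - d_i$ and $p^i$ agreeing with $x^i$ in every other digit. Then $\nsigma[\ybs](P) = n \ominus d^* \ybsp[N]$, since this is clear away from digit $N$ while at digit $N$ it equals $(\sum_i x^i_N - d^*) \bmod \ybs[N] = n_N \ominus d^* = \ybs[N] - 1$. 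Finally let $v = h \ominus d^* \ybsp[N]$, whose digit at $N$ is $h_N \ominus (n_N + 1) = h_N - n_N - 1 \in \set{0, \ldots, \ybs[N] - 2}$ and which agrees with $h$ in every other digit. Then $v$ and $\nsigma[\ybs](P)$ agree in all digits above $N$, while at digit $N$ one has $h_N - n_N - 1 < \ybs[N] - 1$; hence $v < \nsigma[\ybs](P)$ and $N = \max\set{L : \nsigma[\ybs]<L>(P) \neq v_L}$.

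Now I would invoke Lemma \ref{orgtarget50} for the position $P$ and the value $v$: it produces $C \in \sgC$ with $\nsigma[\ybs](P - C) = v$ and $P_{\le N} - C \in \Nb^m$, the latter meaning $c^i \le p^i_{\le N}$ for each $i$. Hence $p^i - c^i$ agrees with $p^i$ in all digits above $N$, and its digit at $N$ is $(p^i_{\le N} - c^i)_N \le p^i_N$ (subtracting $c^i \le p^i_{\le N}$ from $p^i_{\le N}$ cannot increase the digit in position $N$). Consequently, for each $i$ the coordinatewise sum $(p^i - c^i) + d_i \ybsp[N]$ produces no carry out of digit $N$, because $(p^i - c^i)_N + d_i \le p^i_N + d_i = x^i_N \le \ybs[N] - 1$; so $(X - C)^i = (p^i - c^i) + d_i \ybsp[N]$ equals $(p^i - c^i) \oplus d_i \ybsp[N]$ digit by digit. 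Summing over $i$ digitwise gives $\nsigma[\ybs](X - C) = \nsigma[\ybs](P - C) \oplus d^* \ybsp[N] = v \oplus d^* \ybsp[N] = h$, while $X - C \in \Nb^m$ since $c^i \le p^i_{\le N} \le x^i$. Thus $C$ witnesses that $\sgC$ covers $X$ at $h$.

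The step I expect to be the main obstacle is the choice of the auxiliary position $P$: it must be near enough to $X$ that a covering of $P$ transfers back to $X$ — which is why the correction $(d_i \ybsp[N])_i$ is confined to digit $N$ with each $d_i \le x^i_N$, so that re-adding it never carries past digit $N$ — yet far enough from $X$ that $\nsigma[\ybs](P)$ carries the full digit $\ybs[N] - 1$ in position $N$, which is what makes $v < \nsigma[\ybs](P)$ hold and lets the sharper Lemma \ref{orgtarget50} (rather than merely (\hyperref[orgtarget33]{SG2})) pin down the digits of $C$. The hypothesis $\sum_i x^i_N \ge \ybs[N] - 1$ is used exactly to guarantee that such a correction, with $d_0 \oplus \cdots \oplus d_{m - 1} = n_N + 1$, exists.
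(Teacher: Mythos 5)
Your proof is correct and takes essentially the same route as the paper's: both pass to an auxiliary position obtained by lowering the $N$th digits of $X$ so that the $N$th digit of its Grundy value becomes $\ybs[N]-1$ (the paper's $\tilde X$ with $\sum_i \tilde x^i_N = \ybs[N]-1$, your $P$ with $\bigoplus_i p^i_N = \ybs[N]-1$), apply (\hyperref[orgtarget33]{SG2}) to that position with the correspondingly shifted target, which is now smaller, and lift the resulting move back to $X$ by a carry analysis at digit $N$. The one point to patch is your appeal to Lemma \ref{orgtarget50}, which is stated for $\sgC \in \Delta(\nsigma[\ybs])$ whereas here $\sgC$ is only assumed to satisfy (\hyperref[orgtarget33]{SG2}); this is harmless, since the proof of that lemma uses only (\hyperref[orgtarget33]{SG2}) applied to the truncation $P_{\le N}$, but it should be said explicitly.
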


\begin{proof}
 \comment{Proof.}
\label{sec:orgheadline51}
If \(\sum x^i_N = \ybs[N] - 1\), then \(n_N = \sum x^i_N = \ybs[N] - 1 > h_N\), so \(\sgC\) covers \(X\) at \(h\).
Suppose that \(\sum x^i_N \ge \ybs[N]\).
Then there exist \(\tilde{x}^0_N, \ldots, \tilde{x}^{m - 1}_N \in \range{\ybs[N]}\) satisfying the following two conditions:
\begin{enumerate}
\item \(\tilde{x}^i_N \le x^i_N\).
\item \(\sum_i \tilde{x}^i_N = \ybs[N] - 1\).
\end{enumerate}
Let \(\tilde{x}^i = \bnum{x^i_0, \ldots, x^i_{N - 1}, \tilde{x}^i_N} \in \Nb, \tilde{X} = (\tilde{x}^0, \ldots, \tilde{x}^{m - 1})\), and \(\tilde{n} = \nsigma[\ybs](\tilde{X})\).
Put \(g = h \ominus n\).
Since \(N = \max \set{L \in \NN : g_L \neq 0}\) and \(\tilde{n}_N = \sum \tilde{x}^i_N = \ybs[N] - 1\),
it follows that \(\tilde{n} \oplus \ccg < \tilde{n}\).
Hence there exists \(C \in \sgC\) such that \(C\) covers \(\tilde{X}\) at \(\tilde{n} \oplus \ccg\).
We show that \(C\) covers \(X\) at \(\ccn \oplus \ccg (= \cch)\).
Since \(\tilde{X} - C \in \Nb^m\), we see that \(X - C \in \Nb^m\).
To show that  \(\nsigma[\ybs](X - C) = n \oplus g\), it suffices to verify that \(\nsigma[\ybs]<\ccL>(X - C) = \ccn_\ccL \oplus \ccg_\ccL\) for every \(\ccL \in \NN\).
If \(0 \le \ccL < N\), then \(x^i_\ccL = \tilde{x}^i_\ccL\), and hence
\[
 \nsigma[\ybs]<\ccL>(X - C) = \nsigma[\ybs]<L>(\tilde{X} - C) = \tilde{\ccn}_\ccL \oplus \ccg_\ccL = \ccn_\ccL \oplus \ccg_\ccL.
\]
Suppose that \(\ccL = N\), and write
\((x^i - c^i)_N = x^i_N \ominus c^i_N \ominus \epsilon^i_N\).
Then \((\tilde{x}^i - c^i)_N = \tilde{x}^i_N \ominus c^i_N \ominus \epsilon^i_N\), since \(\tilde{x}^i_\ccM =  x^i_\ccM\) for every \(\ccM < N\).
Because \(n_N = x^0_N \oplus \cdots \oplus x^{m - 1}_N\) and \(\tilde{n}_N = \tilde{x}^0_N \oplus \cdots \oplus \tilde{x}^{m - 1}_N\), we see that
\[
 \nsigma[\ybs]<N>(X - C) \ominus \ccn_N = \bigominus_{i} (c_N^i \oplus \epsilon_N^i) =  \nsigma[\ybs]<N>(\tilde{X} - C)\ominus \tilde{n}_N = \ccg_N,
\]
and hence \(\nsigma[\ybs]<N>(X - C) =  n_N \oplus \ccg_N\).
If \(\ccL \ge N + 1\), then \((x^i - c^i)_\ccL = x^i_\ccL\) and \(\ccg_\ccL = 0\), so \(\nsigma[\ybs]<\ccL>(X - C) = \ccn_\ccL \oplus \ccg_\ccL\).
Therefore \(\nsigma[\ybs](X - C) = \ccn \oplus \ccg = \cch\).

\end{proof}

 \begin{corollary}
 \comment{Cor. over}
\label{sec:orgheadline52}
\label{orgtarget53}
Let \(\sgC\) be a subset of \(\Nb^m\) satisfying {\normalfont{(\hyperref[orgtarget33]{SG2})}}.
Let \(X \in \Nb^m, n = \nsigma[\ybs](X), h \in \Nb\) with \(h \neq n\), and \(N = \max \set{L \in \NN : \ccn_L \neq \cch_L}\).
Let \(D \in \Nb^m\) with \(X_{\le N} - D \in \Nb^m\).
If \(D\) covers \(X\) at \(h\), then so does \(\sgC\).
 
\end{corollary}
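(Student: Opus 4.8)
The plan is to split into the two cases $h<n$ and $h>n$ (recall $h\neq n$) and to reduce the second, nontrivial, one to Lemma~\ref{orgtarget52}. The observation behind everything is that, since $n_L=h_L$ for all $L>N$ while $n_N\neq h_N$, the contribution of the digits below $N$ to $n-h$ has absolute value at most $\ybsp[N]-1<\ybsp[N]\le\size{n_N-h_N}\,\ybsp[N]$; hence $n>h$ exactly when $n_N>h_N$, and $n<h$ exactly when $n_N<h_N$.

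If $h<n$, there is nothing to do, and the hypothesis on $D$ is not even needed: $\sgC$ satisfies {\normalfont(\hyperref[orgtarget33]{SG2})}, so it adequately covers $X$, and therefore covers $X$ at every value strictly below $n$, in particular at $h$.

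Now suppose $h>n$, so $h_N>n_N$. The key claim is that the mere existence of $D$ forces the density hypothesis $\sum_i x^i_N\ge\ybs[N]-1$ needed to apply Lemma~\ref{orgtarget52}. To see this, note that $X_{\le N}-D\in\Nb^m$ forces $d^i\le x^i_{\le N}<\ybsp[N+1]$, so each $d^i$ has no nonzero digit above position $N$ and the subtraction $x^i_{\le N}-d^i$ never borrows out of position $N$; consequently $(x^i-d^i)_L=x^i_L$ for all $L>N$ and, since subtracting a nonnegative number from one confined to digits $\le N$ cannot increase its $N$th digit, $(x^i-d^i)_N\le x^i_N$. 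Working in $\range{\ybs[N]}$, we get $h_N=\nsigma[\ybs]<N>(X-D)=(x^0-d^0)_N\oplus\cdots\oplus(x^{m-1}-d^{m-1})_N=(\sum_i(x^i-d^i)_N)\bmod\ybs[N]$, with $\sum_i(x^i-d^i)_N\le\sum_i x^i_N$. Were $\sum_i x^i_N<\ybs[N]$, we would have $n_N=\sum_i x^i_N$ and $h_N=\sum_i(x^i-d^i)_N\le\sum_i x^i_N=n_N$, contradicting $h_N>n_N$. Hence $\sum_i x^i_N\ge\ybs[N]\ge\ybs[N]-1$, and Lemma~\ref{orgtarget52} gives that $\sgC$ covers $X$ at $h$.

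The only routine bookkeeping is the digit-level observation that deleting $D$ from $X$ leaves the digits above position $N$ intact and cannot raise the $N$th digit of any coordinate, which follows at once from $d^i<\ybsp[N+1]$. The one genuine point is the recognition that an ``upward'' move $D$ at $h>n$ touching only digits $\le N$ automatically certifies $\sum_i x^i_N\ge\ybs[N]-1$; with that in hand, Lemma~\ref{orgtarget52} settles the case $h>n$, and the case $h<n$ is immediate from {\normalfont(\hyperref[orgtarget33]{SG2})}.
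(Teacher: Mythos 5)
Your proof is correct and follows essentially the same route as the paper: handle $h<n$ directly from (SG2), and for $h>n$ use $X_{\le N}-D\in\Nb^m$ to get $(x^i-d^i)_N\le x^i_N$, deduce that $h_N>n_N$ forces $\sum_i x^i_N\ge\ybs[N]$, and invoke Lemma \ref{orgtarget52}. The only cosmetic difference is that you argue by contradiction where the paper combines the two inequalities directly; the content is identical.
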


\begin{proof}
 \comment{Proof.}
\label{sec:orgheadline53}
If \(h_N < n_N\), then \(h < n\), and hence \(\sgC\) covers \(X\) at \(h\) since \(\sgC\) satisfies {\normalfont{(\hyperref[orgtarget33]{SG2})}}.
Suppose that \(h_N > n_N\). We show that \(x^0_N + \cdots + x^{m - 1}_N \ge \ybs[N]\).
Let \(Y = X - D\).
Then
\begin{equation}
\label{orgtarget54}
 y^0_N \oplus \cdots \oplus y^{m - 1}_N  = h_N > n_N = x^0_N \oplus \cdots \oplus x^{m - 1}_N.
\end{equation}
Since \(X_{\le N} - D \in \Nb^m\), we see that \(y_{\le N}^i = x^i_{\le N} - d^i \le x^i_{\le N}\), and hence \(y^i_N \le x^i_N\). This implies that
\begin{equation}
\label{orgtarget55}
 y^0_N + \cdots + y^{m - 1}_N < x^0_N + \cdots + x^{m - 1}_N.
\end{equation}
Combining (\ref{orgtarget54}) and (\ref{orgtarget55}), we see that \(x^0_N + \cdots + x^{m - 1}_N \ge \ybs[N]\).
Lemma \ref{orgtarget52} implies that \(\sgC\) covers \(X\) at \(h\).

\end{proof}

\comment{connect}
\label{sec:orgheadline54}
The next result asserts that (\hyperref[orgtarget33]{SG2}) is a local property.

 \begin{lemma}
 \comment{Lem. local}
\label{sec:orgheadline55}
\label{orgtarget56}
Let \(\ccw = \min \set{m, \sup \set{\ybs[L] - 1 : L \in \NN}}\) and \(\sgC \subseteq \Nb^m\).
The following three conditions are equivalent.
\begin{enumerate}
\item \(\sgC\) satisfies {\normalfont{(\hyperref[orgtarget33]{SG2})}}.
\item \(\sgC|_{S}\) satisfies  {\normalfont{(\hyperref[orgtarget33]{SG2})}} for each \(S \subseteq \Omega\).
\item \(\sgC|_{S}\) satisfies {\normalfont{(\hyperref[orgtarget33]{SG2})}} for each \(S \in {\Omega \choose \ccw}\).
\end{enumerate}
 
\end{lemma}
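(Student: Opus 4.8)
The plan is to prove the cycle (1) $\Rightarrow$ (2) $\Rightarrow$ (3) $\Rightarrow$ (1). The step (2) $\Rightarrow$ (3) is immediate, since ${\Omega \choose \ccw}$ is a family of subsets of $\Omega$. For (1) $\Rightarrow$ (2) I would pad with zeros: given $S \subseteq \Omega$ and $X' \in \Nb^{\size{S}}$, let $X \in \Nb^m$ satisfy $X|_S = X'$ and $X|_{\Omega \setminus S} = (0, \ldots, 0)$, so $\nsigma[\ybs](X) = \nsigma[\ybs](X')$; if $C \in \sgC$ covers $X$ at some $h < \nsigma[\ybs](X')$, then nonnegativity of $X|_{\Omega \setminus S} - C|_{\Omega \setminus S}$ forces $C|_{\Omega \setminus S} = (0,\ldots,0)$, so $C|_S \in \sgC|_S$ and $C|_S$ covers $X'$ at $h$; hence $\sgC|_S$ satisfies (\hyperref[orgtarget33]{SG2}). (The case $S = \Omega$ gives $\sgC|_\Omega = \sgC$, i.e.\ (2) $\Rightarrow$ (1), although the cycle does not need this.) Thus the substance of the lemma is (3) $\Rightarrow$ (1).

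Toward (3) $\Rightarrow$ (1) I would first record a weight bound: if $Nj(C) \neq \emptyset$, then $\wt(C) \le \ccw$. Fix $(N, j) \in Nj(C)$. By (\hyperref[orgtarget38]{C1}) we have $c^j \neq 0$, and for $i \neq j$ we have $c^i \neq 0$ if and only if $c^i_N \neq 0$; hence $\wt(C) \le 1 + \sum_{i \neq j} c^i_N$. Writing $\sum_{i \neq j} c^i_N = \sum_i c^i_N - c^j_N$ and using (\hyperref[orgtarget39]{C2}), a short case split on whether $c^j_N = 0$ gives $\sum_{i \neq j} c^i_N \le \ybs[N] - 2$, so $\wt(C) \le \ybs[N] - 1 \le \sup\set{\ybs[L] - 1 : L \in \NN}$; since also $\wt(C) \le m$, we get $\wt(C) \le \ccw$. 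Now assume (3), and fix $X \in \Nb^m$ and $h \in \Nb$ with $h < n := \nsigma[\ybs](X)$; put $N = \max\set{L \in \NN : n_L \neq h_L}$. By Lemma \ref{orgtarget42} there is $D \in \sgCnmin[\ybs]$ that covers $X$ at $h$ and satisfies $X_{\le N} - D \in \Nb^m$ (conditions (\hyperref[orgtarget46]{A1}) and (\hyperref[orgtarget48]{A3})). Since $Nj(D) \neq \emptyset$, the weight bound gives $\wt(D) \le \ccw \le m$, so the set of coordinates on which $D$ is nonzero is contained in some $S \in {\Omega \choose \ccw}$, and then $D|_{\Omega \setminus S} = (0, \ldots, 0)$.

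I would then transfer the problem to the coordinates in $S$. Put $g = \nsigma[\ybs](X|_{\Omega \setminus S})$, $X' = X|_S$, $n' = \nsigma[\ybs](X')$, and $h' = h \ominus g$, and use the identity $\nsigma[\ybs](Z) = \nsigma[\ybs](Z|_S) \oplus \nsigma[\ybs](Z|_{\Omega \setminus S})$ for $Z \in \Nb^m$. Since $D$ vanishes on $\Omega \setminus S$, this identity applied to $X$ and to $X - D$ gives $n = n' \oplus g$ and $\nsigma[\ybs](X' - D|_S) = h'$; that is, $D|_S$ covers $X'$ at $h'$. Because $\ominus$ acts digitwise and $(a \ominus c) \ominus (b \ominus c) = a \ominus b$ in each $\range{\ybs[L]}$, we have $n' \ominus h' = n \ominus h \neq 0$, so $h' \neq n'$ and $\max\set{L \in \NN : n'_L \neq h'_L} = N$; restricting $X_{\le N} - D \in \Nb^m$ to $S$ yields $X'_{\le N} - D|_S \in \Nb^{\size{S}}$. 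Applying Corollary \ref{orgtarget53} (with $\size{S}$ in place of $m$) to the position $X'$, the value $h'$, the witness $D|_S$, and the set $\sgC|_S$ (which satisfies (\hyperref[orgtarget33]{SG2}) by (3)), I conclude that $\sgC|_S$ covers $X'$ at $h'$, say via $C|_S$ with $C \in \sgC$ and $C|_{\Omega \setminus S} = (0, \ldots, 0)$. Then $X - C \in \Nb^m$, and the same splitting identity gives $\nsigma[\ybs](X - C) = \nsigma[\ybs](X' - C|_S) \oplus g = h' \oplus g = h$. Hence $\sgC$ covers $X$ at $h$; as $X$ and $h$ were arbitrary, $\sgC$ satisfies (\hyperref[orgtarget33]{SG2}), which is (1).

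I expect the main obstacle to be that, after restricting to the $\ccw$-subset $S$, the target value $h'$ need not satisfy $h' < n'$, so (\hyperref[orgtarget33]{SG2}) for $\sgC|_S$ does not by itself produce a cover of $X'$ at $h'$; this is exactly what forces the detour through Corollary \ref{orgtarget53}, and why it is essential that Lemma \ref{orgtarget42} delivers a cover $D$ with the extra property $X_{\le N} - D \in \Nb^m$, so that the hypothesis $X'_{\le N} - D|_S \in \Nb^{\size{S}}$ of that corollary survives the restriction. The other delicate point is the weight bound $\wt(C) \le \ccw$ for $C$ with $Nj(C) \neq \emptyset$: it is what matches the support of $D$ to a set of size $\ccw$, and hence what ties the lemma to $\ccw = \min\set{m, \sup\set{\ybs[L] - 1 : L \in \NN}}$.
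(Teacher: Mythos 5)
Your proof is correct, and for the substantive implication (3) \(\Rightarrow\) (1) it takes a genuinely different route from the paper, although both ultimately rest on the same ``over-covering'' principle (Lemma \ref{orgtarget52}). The paper chooses the \(\ccw\)-subset \(S\) by sorting the coordinates so that \(x^0_N \ge \cdots \ge x^{m-1}_N\) and taking the top \(\ccw\) of them; it then splits into two cases by hand: if \(x^{\ccw-1}_N \ge 1\) the digit sum at \(N\) over \(S\) is at least \(\ccw \ge \ybs[N]-1\) and Lemma \ref{orgtarget52} applies directly, while if \(x^{\ccw-1}_N = 0\) one checks \(h' < n'\) and invokes (\hyperref[orgtarget33]{SG2}) for \(\sgC|_S\). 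You instead choose \(S\) as a \(\ccw\)-set containing the support of an explicit move \(D \in \sgCnmin[\ybs]\) supplied by Lemma \ref{orgtarget42}, using the bound \(\wt(D) \le \ccw\) (which you prove correctly from (\hyperref[orgtarget38]{C1})--(\hyperref[orgtarget39]{C2}); the paper proves the same bound, but only later, inside Theorem \ref{orgtarget3}(1)), and then let Corollary \ref{orgtarget53} absorb the case analysis. Your version buys a more uniform argument --- the witness \(D|_S\) with \(X'_{\le N} - D|_S \in \Nb^{\size{S}}\) is exactly the hypothesis Corollary \ref{orgtarget53} wants, mirroring how the paper itself proves Lemma \ref{orgtarget58} --- at the cost of importing the machinery of \(\sgCnmin[\ybs]\) and its weight bound into a statement that, as the paper proves it, needs neither. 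All the delicate points you flag (that \(h'\) need not be below \(n'\), that \(N\) is preserved under the digitwise shift by \(g\), and that (\hyperref[orgtarget48]{A3}) survives restriction to \(S\)) are handled correctly.
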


\begin{proof}
 \comment{Proof.}
\label{sec:orgheadline56}
(1) \(\then\) (2).
We may assume that \(S = \set{0,1,\ldots, \cck - 1}\).
We show that \(\sgC|_S\) adequately covers every \(X' \in \Nb^\cck\).
Let \(n' = \nsigma[\ybs](X')\) and \(h' \in \Nb\) with \(h' < n'\).
Consider 
\[
 X = ((x')^0, \ldots, (x')^{\cck - 1}, 0, \ldots, 0) \in \Nb^m.
\]
Since \(\nsigma[\ybs](X) = n'\),
there exists \(C \in \sgC\) such that \(\nsigma[\ybs](X - C) = h'\) and
\[
 X - C = ((x')^0 - c^0, \ldots, (x')^{\cck - 1} - c^{\cck - 1}, - c^\cck, \ldots, - c^{m - 1}) \in \Nb^m.
\]
It follows that \(C = (c^0, \ldots, c^{\cck - 1}, 0, \ldots, 0)\),
and hence \(C|_S \in \sgC|_S, X' - C|_S \in \Nb^\cck\), and \(\nsigma[\ybs](X' - C|_S) = \nsigma[\ybs](X - C) = h'\).
Thus \(\sgC|_S\) adequately covers \(X'\).

(2) \(\then\) (3). This is trivial.

(3) \(\then\) (1).
If \(\ccw = m\), then this is trivial.
Suppose that \(\ccw < m\). Then \(\ccw = \sup \set{\ybs[L] - 1 : L \in \NN}\).
For \(X \in \Nb^m\), we show that \(\sgC\) adequately covers \(X\).
Let \(n = \nsigma[\ybs](X), h \in \Nb\) with \(h < n\), and
\(N = \max \set{L \in \NN : n_L \neq h_L}\).
By rearranging \(x^i\), we may assume that
\(x^0_N \ge x^1_N \ge \cdots \ge x^{m -1}_N\). 
Let \(S = \set{0, \ldots, \ccw - 1}\), \(\sgC' = \sgC |_{S}, X' = X|_{S}, n' = \nsigma[\ybs](X'), \ccl = n \ominus n' = x^{\ccw} \oplus \cdots \oplus x^{m - 1}\), and 
\(\cch' = \cch \ominus \ccl\).
Then 
\begin{equation}
\label{orgtarget57}
 \ccn = \ccn' \oplus \ccl \quad \tand \quad \cch = \cch' \oplus \ccl.
\end{equation}

We first show that \(\sgC'\) covers \(X'\) at \(h'\).
By (\ref{orgtarget57}),
\[
 \max \set{L \in \NN : n'_L \neq h'_L} = \max \set{L \in \NN : n_L \neq h_L} = N.
\]
If \(x^{\ccw - 1}_N \ge 1\), then \(x^0_N + \cdots + x^{\ccw - 1}_N \ge \ccw \ge \ybs[N] - 1\), so Lemma \ref{orgtarget52} implies that \(\sgC'\) covers \(X'\) at \(h'\).
Suppose that \(x^{\ccw - 1}_N  = 0\). Then \(h' < n'\).
Indeed, since \(x^{\ccw - 1}_N = x^{\ccw}_N = \cdots = x^{m - 1}_N = 0\), it follows that \(n'_N = n_N\),
and hence that \(h'_N = n'_N \ominus n_N \oplus h_N = h_N < n_N = n'_N\).
This implies that \(h' < n'\), so \(\sgC'\) covers \(X'\) at \(h'\), since \(\sgC'\) satisfies (\hyperref[orgtarget33]{SG2}).
Choose \(C' \in \sgC'\) so that \(C'\) covers \(X'\) at \(h'\).

We now prove that \(\sgC\) covers \(X\) at \(h\).
Let \(C = ((c')^0, \ldots, (c')^{\ccw - 1}, 0, \ldots, 0) \in \Nb^m\).
Then \(C \in \sgC\), \(X - C \in \Nb^m\), and
\begin{align*}
 \nsigma[\ybs](X - C) &= \nsigma[\ybs](X' - C') \oplus x^\ccw \oplus \cdots \oplus x^{m - 1} \\
 &= h' \oplus \ccl = \cch.
\end{align*}

\end{proof}

\comment{connect}
\label{sec:orgheadline57}
The third lemma allows us to construct a new Sprague-Grundy system from a given one by replacing a part.

Let \(S\) be a subset of \(\Omega\).
For \(C' \in \Nb^{|S|}\), let \(C'\uparrow^\Omega_S\) denote \(C \in \Nb^m\) defined by
\[
 c^i = \begin{cases}
 (c')^i & \tif i \in S, \\
 0 & \tif i \in \Omega \setminus S.
 \end{cases}
\]
For example, if \(\Omega = \set{0,1,2}, S = \set{0,2}\), and \(C' = (2,3)\), then 
\(C'\uparrow^\Omega_S = (2,0,3)\).

 \begin{lemma}
 \comment{Lem. replace}
\label{sec:orgheadline58}
\label{orgtarget58}
Let \(\sgC \in \Delta(\nsigma[\ybs, m])\), \(S \subseteq \Omega\), and \(\sgC' = \sgC|_S\).
Let \(\sgD' \in \Delta(\nsigma[\ybs, \size{S}])\), and consider
\begin{align*}
 \sgD  &= \left(\sgC \setminus \left(\sgC'\uparrow^\Omega_S \right) \right) \sqcup \left(\sgD'  \uparrow^\Omega_S \right),
\end{align*}
where \(\sgC'\uparrow^\Omega_S = \set{C' \uparrow^\Omega_S : C' \in \sgC'}\).
Then \(\sgD \in \Delta(\nsigma[\ybs, m])\).
Moreover, if \(\sgC\) and \(\sgD'\) are minimal systems of \(\nsigma[\ybs]\), then so is \(\sgD\).
 
\end{lemma}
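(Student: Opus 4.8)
The plan is to check that $\sgD$ satisfies \hyperref[orgtarget32]{(SG1)} and \hyperref[orgtarget33]{(SG2)}, and then to obtain the minimality statement by reversing the construction. The one fact I would use repeatedly is that $\nsigma[\ybs]$ decomposes along the block partition $\Omega = S \sqcup (\Omega \setminus S)$: if $C \in \Nb^m$ is supported on $S$ (i.e.\ $C|_{\Omega \setminus S}$ vanishes), then for every $X \in \Nb^m$ we have $X - C \in \Nb^m$ iff $X|_S - C|_S \in \Nb^{\size{S}}$, and in that case $\nsigma[\ybs](X - C) = \nsigma[\ybs](X|_S - C|_S) \oplus \nsigma[\ybs](X|_{\Omega \setminus S})$. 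I would record at the outset that, since $\sgC' = \sgC|_S$, the set $\sgC' \uparrow^\Omega_S$ is exactly the collection of elements of $\sgC$ supported on $S$; hence the union defining $\sgD$ is genuinely disjoint, the elements of $\sgD$ supported on $S$ are precisely those of $\sgD' \uparrow^\Omega_S$, and $\sgD|_S = \sgD'$.

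To see $\sgD \in \Delta(\nsigma[\ybs, m])$, I would first dispatch \hyperref[orgtarget32]{(SG1)} by contradiction: if $D \in \sgD$ covers $X$ at $n = \nsigma[\ybs](X)$, then either $D \in \sgC \setminus (\sgC' \uparrow^\Omega_S) \subseteq \sgC$, contradicting \hyperref[orgtarget32]{(SG1)} for $\sgC$, or $D = D' \uparrow^\Omega_S$ with $D' \in \sgD'$, in which case the block decomposition turns $\nsigma[\ybs](X - D) = n$ into $\nsigma[\ybs](X|_S - D') = \nsigma[\ybs](X|_S)$, contradicting \hyperref[orgtarget32]{(SG1)} for $\sgD'$. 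For \hyperref[orgtarget33]{(SG2)}, fix $X$, put $n = \nsigma[\ybs](X)$, take $h < n$, and set $N = \max\set{L \in \NN : n_L \neq h_L}$. I would apply Lemma~\ref{orgtarget50} to $\sgC$ to get $C \in \sgC$ with $\nsigma[\ybs](X - C) = h$ and $X_{\le N} - C \in \Nb^m$. If $C \not\in \sgC' \uparrow^\Omega_S$, then $C \in \sgD$ and we are done. Otherwise $C$ is supported on $S$ and $C|_S \in \sgC' = \sgC|_S$; writing $n' = \nsigma[\ybs](X|_S)$ and $h' = \nsigma[\ybs](X|_S - C|_S)$, the block decomposition shows that $n \ominus h$ and $n' \ominus h'$ have the same digits, so $h' \neq n'$ and $\max\set{L \in \NN : n'_L \neq h'_L} = N$, while $X_{\le N} - C \in \Nb^m$ restricts to $(X|_S)_{\le N} - C|_S \in \Nb^{\size{S}}$. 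Thus $C|_S$ covers $X|_S$ at $h'$ and meets the hypotheses of Corollary~\ref{orgtarget53}; since $\sgD'$ satisfies \hyperref[orgtarget33]{(SG2)}, that corollary yields $D' \in \sgD'$ covering $X|_S$ at $h'$, and then $D' \uparrow^\Omega_S \in \sgD$ covers $X$ at $h' \oplus \nsigma[\ybs](X|_{\Omega \setminus S}) = h$. I expect this last case to be the main obstacle: when the move handed to us for $\sgC$ happens to be one of the deleted moves, the $S$-restriction $h'$ of the target need not lie below $n'$, so mere adequacy of $\sgD'$ does not suffice and one must route through the over-covering results (Lemma~\ref{orgtarget52}, Corollary~\ref{orgtarget53}); this is precisely why I would invoke the sharper Lemma~\ref{orgtarget50}, rather than the bare definition of a Sprague-Grundy system, to obtain a covering move $C$ with $X_{\le N} - C \in \Nb^m$.

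For the minimality clause, assume $\sgC$ and $\sgD'$ are minimal systems, take $\sgE \in \Delta(\nsigma[\ybs, m])$ with $\sgE \subseteq \sgD$, and aim to show $\sgE = \sgD$. First, $\sgE|_S \in \Delta(\nsigma[\ybs, \size{S}])$: it satisfies \hyperref[orgtarget33]{(SG2)} by Lemma~\ref{orgtarget56} applied to $\sgE$, and it satisfies \hyperref[orgtarget32]{(SG1)} because a move of $\sgE|_S$ lifts via $\uparrow^\Omega_S$ to a move of $\sgE$ supported on $S$. Since $\sgE|_S \subseteq \sgD|_S = \sgD'$ and $\sgD'$ is minimal, $\sgE|_S = \sgD'$, so $\sgE$ and $\sgD$ share the $S$-supported part $\sgD' \uparrow^\Omega_S$ and $\sgE \setminus (\sgD' \uparrow^\Omega_S) \subseteq \sgD \setminus (\sgD' \uparrow^\Omega_S) = \sgC \setminus (\sgC' \uparrow^\Omega_S)$. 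Applying the first part of the lemma to $\sgE$ (with the same $S$, replacing $\sgE|_S = \sgD'$ by $\sgC' \in \Delta(\nsigma[\ybs, \size{S}])$) produces $\sgE^{*} = (\sgE \setminus (\sgD' \uparrow^\Omega_S)) \sqcup (\sgC' \uparrow^\Omega_S) \in \Delta(\nsigma[\ybs, m])$ with $\sgE^{*} \subseteq \sgC$; minimality of $\sgC$ forces $\sgE^{*} = \sgC$, i.e.\ $\sgE \setminus (\sgD' \uparrow^\Omega_S) = \sgC \setminus (\sgC' \uparrow^\Omega_S)$, and therefore $\sgE = \sgD$.
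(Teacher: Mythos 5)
Your proposal is correct and follows essentially the same route as the paper: Lemma \ref{orgtarget50} to get a covering move $C$ with $X_{\le N}-C\in\Nb^m$, reduction to the $S$-block via the digit identity $n\ominus h = n'\ominus h'$, Corollary \ref{orgtarget53} to handle the case $h'>n'$, and the same ``reverse the construction'' trick (swapping $\sgD'$ back for $\sgC'$ and invoking the first part) for minimality. No gaps.
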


\begin{proof}
 \comment{Proof.}
\label{sec:orgheadline59}
We may assume that \(S = \set{0, 1, \ldots, \cck - 1}\).

We first show that \(\sgD \in \Delta(\nsigma[\ybs, m])\).
It is easy to check that \(\sgD\) satisfies (\hyperref[orgtarget32]{SG1}).
We show that \(\sgD\) also satisfies (\hyperref[orgtarget33]{SG2}).
Let \(X \in \Nb^m, n = \nsigma[\ybs](X)\), and \(h \in \Nb\) with \(h < n\). Set \(N = \max \set{L \in \NN : n_L \neq h_L}\).
By Lemma \ref{orgtarget50}, 
there exists \(C \in \sgC\) such that \(\nsigma[\ybs](X - C) = h\) and
\begin{equation}
\label{orgtarget59}
 X_{\le N} - C \in \Nb^m.
\end{equation}
If \(C \in \sgC \setminus (\sgC' \uparrow^\Omega_S)\) (\(\subseteq \sgD\)), then \(\sgD\) covers \(X\) at \(h\).
Suppose that \(C \in \sgC' \uparrow^\Omega_S\).
Let
\[
 C' = C|_S, \quad X' = X|_S, \quad n' = \nsigma[\ybs]\left(X'\right), \tand h' = \nsigma[\ybs]\left(X' - C' \right).
\]
As we have seen in the proof of Lemma \ref{orgtarget58}, 
to prove that \(\sgD\) covers \(X\) at \(\cch\),
it suffices to show that \(\sgD'\) covers \(X'\) at \(\cch'\)
since \(\ccn \ominus \ccn' = \cch \ominus \cch' = x^k \oplus \cdots \oplus x^{m - 1}\) and
\[
 \sgD|_S = \Bigl(\left(\sgC \setminus \left(\sgC'\uparrow^\Omega_S \right) \right) \sqcup \left(\sgD'  \uparrow^\Omega_S \right)\Bigr)\Bigl|_S =  \left(\sgD'  \uparrow^\Omega_S \right)\bigl|_S = \sgD'.
\]
By (\ref{orgtarget59}), we see that \(X'_{\le N} - C' \in \Nb^\cck\). 
Since \(N = \max \set{L \in \NN : n'_L \neq h'_L}\), it follows from Corollary \ref{orgtarget53} that \(\sgD'\) covers \(X'\) at \(h'\).
Therefore \(\sgD\) satisfies (\hyperref[orgtarget33]{SG2}).

We next show the second part of the lemma.
Suppose that \(\sgC\) and \(\sgD'\) are minimal systems, and let \(\msim{\sgD}\) be a Sprague-Grundy system of \(\nsigma[\ybs]\) with \(\msim{\sgD} \subseteq \sgD\).
We show that \(\msim{\sgD} = \sgD\).
Since \(\msim{\sgD}|_S \subseteq \sgD|_S = \sgD'\) and \(\msim{\sgD}|_S\) is a Sprague-Grundy system of \(\nsigma[\ybs]\),
it follows from the minimality of \(\sgD'\) that \(\msim{\sgD}|_S = \sgD'\).
Hence it suffices to show that \(\msim{\sgD} \setminus (\sgD' \uparrow_S^\Omega) = \sgD \setminus (\sgD' \uparrow_S^\Omega)\).
Consider 
\[
 \msim{\sgC} = \left(\msim{\sgD} \setminus \left(\sgD' \uparrow_S^\Omega \right)\right) \sqcup \left(\sgC' \uparrow_S^\Omega \right).
\]
By the first part of the lemma, \(\msim{\sgC} \in \Delta(\nsigma[\ybs])\).
Since
\[
 \msim{\sgC}  \subseteq \left(\sgD \setminus \left(\sgD' \uparrow_S^\Omega \right)\right) \sqcup \left(\sgC' \uparrow_S^\Omega \right) = \left(\sgC \setminus \left(\sgC' \uparrow_S^\Omega \right)\right) \sqcup \left(\sgC' \uparrow_S^\Omega \right) = \sgC,
\]
it follows from the minimality of \(\sgC\) that \(\msim{\sgC} = \sgC\).
Therefore
\[
 \msim{\sgD} \setminus \left(\sgD' \uparrow_S^\Omega\right) = \msim{\sgC} \setminus \left(\sgC' \uparrow_S^\Omega\right) = \sgC \setminus \left(\sgC' \uparrow_S^\Omega\right) =  \sgD \setminus \left(\sgD' \uparrow_S^\Omega\right).
\]

\end{proof}

\section{Minimal systems}
\label{sec:orgheadline75}
\label{orgtarget30}
\subsection{Proof of Theorem \texorpdfstring{\ref{orgtarget3}}{1.7}}
\label{sec:orgheadline66}
Let \(\ccw = \min \Set{m, \sup \set{\ybs[L] - 1 : L \in \NN}}\).

\comment{Proof of (1)}
\label{sec:orgheadline62}
(1) We first show that \(\wt(\sgC) \ge \ccw\) for every \(\sgC \in \Delta(\nsigma[\ybs, m])\).
By the definition of \(\ccw\), we see that \(\ccw \le m\) and \(\ccw \le \ybs[N] - 1\) for some \(N \in \NN\).
Let
\[
 X = (\underbrace{\ybsp[N], \ldots, \ybsp[N]}_{\ccw}, 0, \ldots, 0) \in \Nb^m.
\]
Then \(\nsigma[\ybs](X) = \ccw \ybsp[N] > 0\).
Since \(\sgC \in \Delta(\nsigma[\ybs])\), there exists \(C \in \sgC\) such that
\(C\) covers \(X\) at 0.
Let \(Y = X - C\). Then \(y^i_N = 0\) for every \(i \in \Omega\) because
\(\nsigma[\ybs]<N>(Y) = y_N^0 \oplus \cdots \oplus y_N^{m - 1} = 0\) and \(y_N^0 + \cdots + y^{m - 1}_N \le x^0_N + \cdots + x^{m - 1}_N = \ccw \le \ybs[N] - 1\).
This implies that \(\wt(C) = \ccw\). 
Hence \(\wt(\sgC) \ge \ccw\).
It remains to show that \(\wt(\sgC) = \ccw\) for some \(\sgC \in \Delta(\nsigma[\ybs, m])\).

We prove that \(\wt(\sgCnmin[\ybs, m]) = \ccw\).
Let \(C \in \sgCnmin[\ybs, m]\) and \((N, j) \in Nj(C)\).
Since \(\wt(C) \le m\) and \(\min \set{m, \ybs[N] - 1} \le \min \set{m, \sup \set{\ybs[L] - 1 : L \in \NN}} = \ccw\),
we need only show that \(\wt(C) \le \ybs[N] - 1\).
By (\hyperref[orgtarget38]{C1}), if \(i \neq j\), then \(c^i \neq 0\) if and only if \(c^i_N \neq 0\).
Thus 
\[
 \wt(C) \le 1 + \sum_{i \neq j} c^i_N \le c^j_N + \delta(c^j_N) + \sum_{i \neq j} c^i_N \le \ybs[N] - 1
\]
by (\hyperref[orgtarget39]{C2}). This implies that \(\wt(C) \le \ccw\). Therefore \(\wt(\sgCnmin[\ybs, m]) = \ccw\).

\comment{Proof of (2).}
\label{sec:orgheadline63}
(2) 
(a) \(\then\) (b).
Let \(S \subseteq \Omega\) and \(\sgC' = \sgC|_S\). 
Then \(\sgC'\) satisfies (\hyperref[orgtarget32]{SG1}), so \(\sgC'\in \Delta(\nsigma[\ybs, \size{S}])\) by Lemma \ref{orgtarget56}.
We show that \(\sgC'\) is minimal.
Let \(\sgD'\) be a Sprague-Grundy system of \(\nsigma[\ybs, \size{S}]\) with \(\sgD' \subseteq \sgC'\), and consider
\[
 \sgD = \left(\sgC \setminus \left(\sgC' \uparrow_{S}^{\Omega}\right)\right) \sqcup \left(\sgD' \uparrow_{S}^{\Omega}\right) \text{(}\subseteq \sgC \text{)}.
\]
Lemma \ref{orgtarget58} implies that \(\sgD \in \Delta(\nsigma[\ybs, m])\).
Since \(\sgD \subseteq \sgC\), it follows from the minimality of \(\sgC\) that \(\sgD = \sgC\).
Hence \(\sgD' = \sgD|_S = \sgC|_S = \sgC'\), so \(\sgC'\) is a minimal system of \(\nsigma[\ybs, \size{S}]\).

(b) \(\then\) (c).
We need only show that \(\wt(\sgC) = \ccw\).
Let
\[
 \sgD = \Set{C \in \sgC : \wt(C) \le \ccw}.
\]
If \(S \in {\Omega \choose \ccw}\), then \(\sgD|_S = \sgC|_S \in \Delta(\nsigma[\ybs, \ccw])\).
Since \(\sgD\) satisfies (\hyperref[orgtarget32]{SG1}), it follows from Lemma \ref{orgtarget56} that \(\sgD \in \Delta(\nsigma[\ybs])\).
By the minimality of \(\sgC\), we see that \(\sgD = \sgC\).
Hence \(\wt(\sgC) = \wt(\sgD) \le \ccw\). By (1), \(\wt(\sgC) = \ccw\).

(c) \(\then\) (a).
It is easy to see that \(\sgC\) satisfies (\hyperref[orgtarget32]{SG1}).
Hence \(\sgC \in \Delta(\nsigma[\ybs, m])\) by Lemma \ref{orgtarget56}.
It remains to verify that \(\sgC\) is minimal.
Let \(\sgD\) be a Sprague-Grundy system of \(\nsigma[\ybs, m]\) with \(\sgD \subseteq \sgC\).
Since \(\wt(\sgC) = \wt(\sgD) = \ccw\), it follows that
\[
 \sgC = \bigcup_{S \in {\Omega \choose \ccw}} \left(\sgC|_S\right) \uparrow_S^\Omega \quad \tand \quad \sgD = \bigcup_{S \in {\Omega \choose \ccw}} \left(\sgD|_S\right) \uparrow_S^\Omega.
\]
Let \(S \in {\Omega \choose \ccw}\).
Since \(\sgD|_S \subseteq \sgC|_S\), 
it follows from the minimality of \(\sgC|_S\) that \(\sgD|_S = \sgC|_S\). Hence \(\sgD = \sgC\).
This means that \(\sgC\) is a minimal system.

\comment{Proof of (3).}
\label{sec:orgheadline64}
(3) (\(\Leftarrow\)). We show that 
\(\sgCnmin[\ybs]\) is a unique minimal system of \(\nsigma[\ybs]\) when \(\ybs = (\ybs[0], 2, 2, \ldots)\).

We first verify that
\begin{equation}
\label{orgtarget60}
 \sgCnmin[\ybs] = \Set{C \in \Nb^m : \wt(C) = 1} \cup \Set{C \in \Nb^m : 1 \le \sum_{i \in \Omega} c^i \le \ybs[0] - 1}.
\end{equation}
Let \(\sgD\) be the right-hand side of (\ref{orgtarget60}).
By the definition of \(\sgCnmin[\ybs]\), we see that \(\sgD \subseteq \sgCnmin[\ybs]\).
Let \(C \in \sgCnmin[\ybs]\) and \((N, j) \in Nj(C)\). Then \(c^j \neq 0\).
We divide the proof into two cases.
First, suppose that \(N = 0\). From (\hyperref[orgtarget38]{C1}), we see that \(c^i = c^i_0\) for every \(i \in \Omega\).
Hence \(1 \le \sum c^i \le \ybs[0] - 1\) by (\hyperref[orgtarget39]{C2}). This means that \(C \in \sgD\).
Second, suppose that \(N \ge 1\).
Then
\[
 \sum_{i \neq j} c^i_N < \sum_i c^i_N + \delta(c_N^j) \le  \ybs[N] - 1 = 1,
\]
by (\hyperref[orgtarget39]{C2}).
Thus if \(i \neq j\), then \(c^i_N = 0\), and hence \(c^i = 0\) by (\hyperref[orgtarget38]{C1}).
In particular, \(\wt(C) = 1\). Therefore \(\sgCnmin[\ybs] \subseteq \sgD\).

We now show that \(\sgCnmin[\ybs]\) is a unique minimal system of \(\nsigma[\ybs]\).
It suffices to show that \(\sgCnmin[\ybs] \subseteq \sgC\) for every \(\sgC \in \Delta(\nsigma[\ybs])\).
Let \(X \in \sgCnmin[\ybs]\).
It follows from (\ref{orgtarget60}) that \(\nsigma[\ybs](X) = \sum_i x^i > 0\).
Hence there exists \(C \in \sgC\) such that \(\nsigma[\ybs](X - C) = 0\) and \(X - C \in \Nb^m\).
By (\ref{orgtarget60}), we see that \(X - C = (0, \ldots, 0)\),
and hence \(X = C \in \sgC\). We conclude that \(\sgC \supseteq \sgCnmin[\ybs]\).

(\(\then\)).
We prove that \(\nsigma[\ybs]\) has at least two minimal systems when \(\ybs[N] \ge 3\) for some \(N \ge 1\).
Lemma \ref{orgtarget58} implies that it suffices to show the assertion for \(m = 2\).
Let \(C^{(0)} = (1, \ybsp[N]), C^{(1)} = (\ybsp[N], 1)\), and \(\sgB = \sgCnmin[\ybs] \setminus \set{C^{(0)}, C^{(1)}}\).
Then \(C^{(0)} \neq C^{(1)}\) since \(N \ge 1\).
We show the following two assertions:
\begin{enumerate}[label={(\alph*)}]
\item \(\sgB \cup \set{C^{(i)}} \in \Delta(\nsigma[\ybs])\) for each \(i \in \set{0, 1}\).
\item \(\sgB \not \in \Delta(\nsigma[\ybs])\).
\end{enumerate}
By (a) and (b), \(\nsigma[\ybs]\) has at least two minimal systems.
Indeed, \(\sgB \cup \set{C^{(i)}}\) contains a minimal system \(\sgC^{(i)}\) by (a).
From (b), we see that \(C^{(i)}\) must be in \(\sgC^{(i)}\), so \(\sgC^{(0)} \neq \sgC^{(1)}\).

(a)
It suffices to show the assertion for \(i = 0\).
Let \(X \in \Nb^2, n = \nsigma[\ybs](X)\), and \(h \in \Nb\) with \(h < n\). Set \(\msim{\ccN} = \max \set{L \in \NN : n_L \neq h_L}\).
We show that \(\sgB \cup \set{C^{(0)}}\) covers \(X\) at \(h\).
Choose \(j \in \set{0, 1}\) so that \(x^j_{\msim{\ccN}} \ge x^0_{\msim{\ccN}} , x^1_{\msim{\ccN}}\).
Lemma \ref{orgtarget42} shows that there exists \(C \in \sgCnmin[\ybs]\) satisfying (\hyperref[orgtarget46]{A1}) \(\nsigma[\ybs](X - C) = h\);
(\hyperref[orgtarget47]{A2}) \((\msim{\ccN}, j) \in Nj(C)\); (\hyperref[orgtarget48]{A3}) \(X_{\le \msim{\ccN}} - C \in \Nb^2\); and (\hyperref[orgtarget49]{A4}) \((x^j - c^j)_{\msim{\ccN}} < x^j_{\msim{\ccN}}\).
If \(C \neq C^{(1)}\), then \(C \in \sgB \cup \set{C^{(0)}}\).
Suppose that \(C = C^{(1)}\). 
Since \(Nj(C^{(1)}) = \set{(N, 1)}\), it follows from (\hyperref[orgtarget47]{A2}) that \((\msim{\ccN}, j) = (N, 1)\). 
By (\hyperref[orgtarget49]{A4}), \((x^1 - 1)_N < x^1_N\).
This implies that \(x^1_{\le N} = x^1_N \ybsp[N] = \bnum{0, \ldots, 0, x^1_N}\).
By (\hyperref[orgtarget48]{A3}),
\[
X - C^{(1)} =\begin{bmatrix}
 x^0_0 & \cdots & x^0_{N - 1} & x^0_N  \ominus 1 & x^0_{N + 1} & \cdots \\
 \ominus 1 & \cdots & \ominus 1 & x^1_N  \ominus 1 & x^1_{N + 1} & \cdots \\
\end{bmatrix},
\]and hence by (\hyperref[orgtarget46]{A1}),
\[
 h = \nsigma[\ybs](X - C^{(1)}) = \bnum{x^0_0 \ominus 1, \ldots, x^{0}_{N - 1} \ominus 1, x^0_N \oplus x^1_N \ominus 2, x^0_{N+1} \oplus x^1_{N+1}, \ldots}.
\]
We divide the proof into two cases. First, suppose that \(x^1_N \ge 2\). Consider \(D = (0, 1 + \ybsp[N]) \in \sgB\). Then \(X - D \in \Nb^2\) and \(\nsigma[\ybs](X - D) = h\).
Hence \(D\) covers \(X\) at \(h\).
Next, suppose that \(x^1_N = 1\). Then \(x^1_{\le N} = \ybsp[N]\). Since \(x^0_{\le N }\le x^1_{\le N}\), it follows from (\hyperref[orgtarget48]{A3}) that 
\[
 c^0 = \ybsp[N] \le x^0_{\le N} \le x^1_{\le N} = \ybsp[N],
\]
so  \(x^0_{\le N} = x^1_{\le N} = \ybsp[N]\). 
Thus \(\nsigma[\ybs](X - C^{(0)}) = \nsigma[\ybs](X - C^{(1)}) = h\), and hence \(C^{(0)}\) covers \(X\) at \(h\).
We conclude that \(\sgB \cup \set{C^{(0)}} \in \Delta(\nsigma[\ybs])\). \footnote{Observe that \(\sgB\) adequately covers \(X\) with \(X_{\le N} \neq (\ybsp[N], \ybsp[N])\). We use this fact in the proof of Theorem \ref{orgtarget19}.}

(b)
Consider \(X = (\ybsp[N], \ybsp[N])\).
Then \(\nsigma[\ybs](X - C^{(0)}) = \nsigma[\ybs](X - C^{(1)}) = \ybsp[N] - 1\).
We show that \(\sgB\) does not cover \(X\) at \(\ybsp[N] - 1\).
Suppose that \(C \in \sgCnmin[\ybs]\) covers \(X\) at \(\ybsp[N] - 1\).
It suffices to show that \(C \in \set{C^{(0)}, C^{(1)}}\).
Since \(C \in \sgCnmin[\ybs]\), we see that \((\msim{\ccN}, j) \in Nj(C)\) for some \(\msim{\ccN}, j \in \NN\).
Suppose that \(j = 0\). Then
\begin{equation}
\label{orgtarget61}
C = \begin{bmatrix}
c^0_0 & \ldots & c^0_{\msim{\ccN} - 1} & c^0_{\msim{\ccN}} \\
0 & \ldots & 0 &  c^1_{\msim{\ccN}}
\end{bmatrix} \tand c^0_{\msim{\ccN}} + c^1_{\msim{\ccN}} \le \ybs[\msim{\ccN}] - 1.
\end{equation} 
Let \(Y = X - C\).
Since \(\nsigma[\ybs](Y) = \ybsp[N] - 1\),
\begin{equation}
\label{orgtarget62}
 y^0_L \oplus y^1_L = \begin{cases} 
 \ominus 1 & \tif 0 \le L \le N - 1, \\
 0 & \tif L \ge N.
 \end{cases}
\end{equation}
If \(y^0_L \oplus y^1_L = \ominus 1\), then \(y^0_L + y^1_L = \ominus 1\).
Hence \(y^0 + y^1 = \ybsp[N] - 1\).
We also see that \(c^0, c^1 \neq 0\) and \(c^0 + c^1 = x^0 + x^1 - (y^0 + y^1) = \ybsp[N] + 1\).
By (\ref{orgtarget61}), \(c^0 = 1\) and \(c^1 = \ybsp[N]\),
that is, \(C = C^{(0)}\). By symmetry, if \((\msim{\ccN}, 1) \in Nj(C)\), then \(C = C^{(1)}\).
We conclude that \(\sgB\) is not a Sprague-Grundy system of \(\nsigma[\ybs]\).

\qed

 \begin{example}
 \comment{Exm. min-system}
\label{sec:orgheadline65}
\label{orgtarget18}
Let \(\ybs = (2,3,2,2,\ldots)\) and \(m = 2\).
We give two minimal systems of \(\nsigma[\ybs, 2]\).
A straightforward computation shows that
\[
 \sgCnmin[\ybs, 2] = \set{C \in \Nb^2 : \wt(C) = 1} \cup \set{C^{(0)}, C^{(1)}, D,  E^{(0)}, E^{(1)}},
\]
where
\[
C^{(0)} =\begin{bmatrix}
 1 & 0 \\
 0 & 1 \\
\end{bmatrix},
C^{(1)} =\begin{bmatrix}
 0 & 1 \\
 1 & 0 \\
\end{bmatrix},
D =\begin{bmatrix}
 0 & 1 \\
 0 & 1 \\
\end{bmatrix},
E^{(0)} =\begin{bmatrix}
 1 & 1 \\
 0 & 1 \\
\end{bmatrix},
E^{(1)} =\begin{bmatrix}
 0 & 1 \\
 1 & 1 \\
\end{bmatrix}.
\]Consider
\[
 \sgC^{(0)} = \sgCnmin[\ybs, 2] \setminus \set{C^{(1)}, E^{(1)}} \tand \sgC^{(1)} = \sgCnmin[\ybs, 2] \setminus \set{C^{(0)}, E^{(0)}}.
\]
We verify that \(\sgC^{(0)}\) and \(\sgC^{(1)}\) are minimal systems of \(\nsigma[\ybs, 2]\).
By symmetry, it suffices to show the assertion for \(\sgC^{(0)}\).

We first show that \(\sgC^{(0)} \in \Delta(\nsigma[\ybs])\).
Let \(X \in \Nb^2, n = \nsigma[\ybs](X)\), and \(h \in \Nb\) with \(h < n\). Set \(N = \max \set{L \in \NN: n_L \neq h_L}\).
We show that \(\sgC^{(0)}\) covers \(X\) at \(h\).
Choose \(j \in \set{0,1}\) so that \(x^j_N \ge x^0_N, x^1_N\).
By Lemma \ref{orgtarget42}, there exists \(C \in \sgCnmin[\ybs]\) satisfying (\hyperref[orgtarget46]{A1})-(\hyperref[orgtarget49]{A4}).
If \(C \not \in \set{C^{(1)}, E^{(1)}}\), then \(C \in \sgC^{(0)}\), so
we may assume that \(C \in \set{C^{(1)}, E^{(1)}}\).

If \(C = C^{(1)}\), then \(\set{C^{(0)}, (0, 1 + \ybsp[1])}\) covers \(X\) at \(h\) as we have seen in the proof of Theorem \ref{orgtarget3},
and hence so does \(\sgC^{(0)}\) since \(\set{C^{(0)}, (0, 1 + \ybsp[1])} \subset \sgC^{(0)}\).

Suppose that \(C = E^{(1)}\). Then \(Nj(E^{(1)}) = \set{(1, 1)}\), so \((N, j) = (1, 1)\) and \(n_1 \neq h_1\).
Since \(\ybs[1] = 3\) and
\[
X_{\le 1} - E^{(1)} =\begin{bmatrix}
 x^0_0 & x^0_1 \\
 x^1_0 & x^1_1 \\
\end{bmatrix}
-\begin{bmatrix}
 0 & 1 \\
 1 & 1 \\
\end{bmatrix}=
\begin{bmatrix}
 x^0_0 & x^0_1  \ominus 1 \\
 x^1_0  \ominus 1 & x^1_1  \ominus 1 \ominus \delta(x^1_0) \\
\end{bmatrix},
\]it follows that \(\delta(x^1_0) = 0\), for otherwise \(h_1 = x^0_1 \oplus x^1_1 \ominus 1 \ominus 1 \ominus 1 = x^0_1 \oplus x^1_1 = n_1\).
Consider
\[
\ccB =\begin{bmatrix}
 1 & 1 \ominus \delta(x^0_0) \\
 0 & 1 \\
\end{bmatrix}.
\]Observe that \(B \in \set{C^{(0)}, E^{(0)}}\) and \(X_{\le 1} - \ccB \in \Nb^2\). Moreover,
\[
X_{\le 1} - \ccB =\begin{bmatrix}
 x^0_0 & x^0_1 \\
 x^1_0 & x^1_1 \\
\end{bmatrix}
-\begin{bmatrix}
 1 & 1 \ominus \delta(x^0_0) \\
 0 & 1 \\
\end{bmatrix}=
\begin{bmatrix}
 x^0_0  \ominus 1 & x^0_1  \ominus 1 \\
 x^1_0 & x^1_1  \ominus 1 \\
\end{bmatrix}.
\]Hence \(\nsigma[\ybs](X - \ccB) = \nsigma[\ybs](X - E^{(1)})\).
Since \(\ccB \in \set{C^{(0)}, E^{(0)}}\), the set \(\sgC^{(0)}\) covers \(X\) at \(h\),
and therefore it is a Sprague-Grundy system of \(\nsigma[\ybs]\).

We next prove the minimality of \(\sgC^{(0)}\).
Let \(\sgC\) be a Sprague-Grundy system of \(\nsigma[\ybs]\).
It suffices to show the following three assertions:

\begin{enumerate}
\item \(\sgC\) contains \(\ccC^{(0)}\) or \(\ccC^{(1)}\).
\item \(\sgC\) contains \(\ccD\).
\item \(\sgC\) contains \(\ccE^{(0)}\) or \(\ccE^{(1)}\).
\end{enumerate}

First, we show (1) and (3).
Let \(\epsilon \in \set{0,1}\) and \(X = (\bnum{\epsilon, 1}, \bnum{\epsilon, 1})\).
Then \(\nsigma[\ybs](X) = \bnum{0,2}\), so \(X\) is covered at \(1\) by some \(C \in \sgC\).
Since \(X = (\bnum{\epsilon,1}, \bnum{\epsilon,1})\), we see that \(X - C  \in \set{(0, 1), (1, 0)}\).
Hence 
\[
 C \in \set{X - (0,1), X - (1,0)} = \begin{cases}
  \set{C^{(1)},  C^{(0)}} & \tif \epsilon = 0,\\
  \set{E^{(0)},  E^{(1)}} & \tif \epsilon = 1.\\
 \end{cases}
\]

Next, we verify (2).
Let \(X = \ccD\).
Then \(X\) is covered at \(0\) by some \(C \in \sgC\).
Since \(X = (\bnum{0,1}, \bnum{0,1})\), it follows that \(X - C  \in \set{(0, 0), (1, 1)}\).
Assume that \(X - C = (1,1)\).
Then \(C = (1,1)\), but \((1,1)\) does not satisfy (\hyperref[orgtarget32]{SG1}), because it covers \((1,1)\) at \(0\) and \(\nsigma[\ybs]((1,1)) = 0\).
This implies that \(C = X - (0,0) = D\).

Therefore \(\sgC^{(0)}\) and \(\sgC^{(1)}\) are minimal systems of \(\nsigma[\ybs]\).
Moreover, minimal systems of \(\nsigma[\ybs]\) are only \(\sgC^{(0)}\) and \(\sgC^{(1)}\).
Indeed, let \(\sgC\) be a Sprague-Grundy system of \(\nsigma[\ybs]\)
and \(X = E^{(0)}\). Then \(\nsigma[\ybs](X) = \bnum{1,2}\), so \(X\) is covered at \(0\) by some \(C \in \sgC\).
It follows that \(X - C  \in \set{(0, 0), (1, 1)}\).
Hence \(C \in \set{X - (0,0), X - (1,1)} = \set{E^{(0)},  C^{(1)}}\).
Therefore \(\sgC\) contains \(E^{(0)}\) or \(C^{(1)}\).
By symmetry, \(\sgC\) contains \(E^{(1)}\) or \(C^{(0)}\).
This implies that minimal systems of \(\nsigma[\ybs]\) are only \(\sgC^{(0)}\) and \(\sgC^{(1)}\).
 
\end{example}

\subsection{Proof of Theorem \texorpdfstring{\ref{orgtarget19}}{1.9}}
\label{sec:orgheadline74}
\comment{unique minimal}
\label{sec:orgheadline67}
We first show that \(\sgCnmin[\ybs]\) is a unique minimal symmetric system of \(\nsigma[\ybs]\) when \(\ybs = (\ybs[0],2,\ldots)\) or \(\ybs = (2,3,2,\ldots)\).
It is obvious that \(\sgCnmin[\ybs]\) is symmetric.
We show the minimality of \(\sgCnmin[\ybs]\). Let \(\sgC\) be a symmetric Sprague-Grundy system of \(\nsigma[\ybs]\).
If \(\ybs = (\ybs[0], 2, \ldots)\), then \(\sgCnmin[\ybs] \subseteq \sgC\) since \(\sgCnmin[\ybs]\) is the minimal system of \(\nsigma[\ybs]\).
Suppose that \(\ybs = (2, 3, 2, \ldots)\).
Since, for \(S \in {\Omega \choose 2}\), the set \(\sgC|_S\) is a symmetric Sprague-Grundy system of \(\nsigma[\ybs, 2]\),
it follows from Example \ref{orgtarget18} that \(\sgCnmin[\ybs, 2] \subseteq \sgC|_S\).
Since \(\wt(\sgCnmin[\ybs, m]) = 2\), we see that
\[
 \sgCnmin[\ybs, m] = \bigcup_{S \in {\Omega \choose 2}} \sgCnmin[\ybs, 2] \uparrow_{S}^{\Omega} \ \ \ \subseteq \ \bigcup_{S \in {\Omega \choose 2}} \left(\sgC|_S \right) \uparrow_{S}^{\Omega} \ \ \ \subseteq \ \sgC.
\]

\comment{non unique minimal}
\label{sec:orgheadline73}
We next show that the function \(\nsigma[\ybs]\) has at least two minimal symmetric systems
when \(\ybs \neq (\ybs[0], 2, \ldots)\) and \(\ybs \neq (2,3,2,\ldots)\).
 
 \begin{step}
 \comment{Step.}
\label{sec:orgheadline68}
Let \(\sgC\) and \(\sgD'\) be symmetric systems of \(\nsigma[\ybs, m]\) and \(\nsigma[\ybs, k]\) with \(k \le m\), respectively.
Let \(\sgC' = \sgC|_{\set{0,1,\ldots, k - 1}}\).
Consider
\begin{equation}
\label{orgtarget63}
 \sgD = \sgC[\sgD'] = \bigg(\sgC \setminus \bigcup_{S \in {\Omega \choose \cck}} \big(\sgC' \uparrow_S^\Omega \big) \bigg) \sqcup \bigcup_{S \in {\Omega \choose \cck}} \big(\sgD' \uparrow_S^\Omega \big).
\end{equation}
We show that \(\sgD\) is a symmetric system of \(\nsigma[\ybs, m]\) with \(\sgD|_T = \sgD'\) for every \(T \in {\Omega \choose \cck}\).

We first verify that \(\sgD|_T = \sgD'\) for \(T \in {\Omega \choose \cck}\).
Since
\[
 \sgD|_T = \bigcup_{S \in {\Omega \choose \cck}} \big(\sgD' \uparrow_S^\Omega \big) \big|_T,
\]
it follows that \(\sgD|_T \supseteq \sgD'\).
We show that \(\sgD|_T \subseteq \sgD'\).
Let \(D' \in \sgD|_T\). Then \(D' \in (\sgD' \uparrow_S^\Omega)|_T\) for some \(S \in {\Omega \choose k}\),
and hence \(D' = (E' \uparrow_S^\Omega)|_T\) for some \(E' \in \sgD'\).
This implies that \(D'\) can be obtained by permuting the coordinates of \(E\), that is,
\(D' = ((e')^{\pi(0)}, \ldots (e')^{\pi(\cck - 1)})\) for some \(\pi \in \fS_{k}\). \footnote{For example, let \(\Omega = \{0,1,2,3\}, T = \{0,1,2\}, S = \{0,2,3\}\), and \((a,b,0) \in \sgD'\).
Then \(((a,b,0) \uparrow^\Omega_S)|_T = (a,0,b,0)|_T = (a,0,b)\). Since \(\sgD'\) is symmetric, we see that \((a,0,b) \in \sgD'\).}
It follows from the symmetry of \(\sgD'\) that \(D' \in \sgD'\).
Hence \((\sgD' \uparrow_S^\Omega)|_T \subseteq \sgD'\). This implies that \(\sgD|_T \subseteq \sgD'\).

We next show that \(\sgD\) is a symmetric system.
Since \(\sgC\) and \(\sgD'\) satisfy (\hyperref[orgtarget32]{SG1}), so does \(\sgD\).
We show that \(\sgD\) also satisfies (\hyperref[orgtarget33]{SG2}).
Let \(X \in \Nb^m, n = \nsigma[\ybs](X)\), and \(h \in \Nb\) with \(h < n\).
Set \(N = \max \set{L \in \NN : n_L \neq h_L}\).
By Lemma \ref{orgtarget50},
we see that \(X\) is covered at \(h\) by some \(C \in \sgC\) with \(X_{\le N} - C \in \Nb^m\).
If 
\[
 C \in \sgC \setminus \bigcup_{S \in {\Omega \choose k}} \big(\sgC' \uparrow_S^\Omega \big),
\]
then \(C \in \sgD\), so \(\sgD\) covers \(X\) at \(h\).
Suppose that \(C \in \bigcup (\sgC' \uparrow_S^\Omega)\), and choose \(S \in {\Omega \choose k}\) such that \(C \in \sgC' \uparrow_S^\Omega\).
Let \(C' = C|_S, X' = X|_S, n' = \nsigma[\ybs](X')\), and \(h' = \nsigma[\ybs](X' - C')\).
Then \(\ccn \ominus \ccn' = \cch \ominus \cch'\).
Since \(N = \max \set{L \in \NN : n'_L \neq h'_L}\) and \(X'_{\le N} - C' \in \Nb^k\), it follows from Corollary \ref{orgtarget53} that
\(\sgD'\) covers \(X'\) at \(h'\). This implies that \(\sgD\) covers \(X\) at \(h\).
Since \(\sgD\) is symmetric, it is a symmetric system of \(\nsigma[\ybs, m]\).
 
\end{step}

 \begin{step}
 \comment{Step.}
\label{sec:orgheadline69}
We show that if \(\nsigma[\ybs, 2]\) has at least two minimal symmetric systems, then so does \(\nsigma[\ybs, m]\).
Let \(\sgC\) be a minimal symmetric system of \(\nsigma[\ybs, m]\) and \(\sgC'  = \sgC|_{\set{0,1}}\).

We first verify that \(\sgC'\) is a minimal symmetric system of \(\nsigma[\ybs, 2]\). \footnote{We can obtain an analogue of Theorem \ref{orgtarget3} (2) for minimal symmetric systems.}
Let \(\sgD'\) be a symmetric system with \(\sgD' \subseteq \sgC'\), and let \(\sgD = \sgC[\sgD']\) defined in (\ref{orgtarget63}).
By Step 1, we see that \(\sgD\) is a symmetric system with \(\sgD \subseteq \sgC\). It follows from the minimality of \(\sgC\) that \(\sgD = \sgC\).
Hence \(\sgD' = \sgD_{\set{0,1}} = \sgC_{\set{0,1}} = \sgC'\). Thus \(\sgC'\) is a minimal symmetric system.

We now show that \(\nsigma[\ybs, m]\) has at least two minimal symmetric systems.
By assumption, \(\nsigma[\ybs, 2]\) has a minimal symmetric system \(\sgD'\) with \(\sgD' \not \supseteq \sgC'\).
Let \(\sgD = \sgC[\sgD']\).
By Step 1, \(\sgD\) is a symmetric system, and therefore it contains a minimal symmetric system \(\msim{\sgD}\). \footnote{We can show that \(\msim{\sgD} = \sgD\) in the same way as in the proof of Lemma \ref{orgtarget58}.}
Since \(\sgD_{\set{0,1}} = \sgD' \not \supseteq \sgC'\), we conclude that \(\msim{\sgD} \neq \sgC\).
Therefore \(\nsigma[\ybs, m]\) has at least two minimal symmetric systems.
 
\end{step} 

\comment{connect}
\label{sec:orgheadline70}
By Step 2, we may assume that \(m = 2\).
Choose \(N \ge 1\) so that \(\ybs[N] \ge 3\).

 \begin{step}
 \comment{Step.}
\label{sec:orgheadline71}
\label{orgtarget64}
We show that \(\nsigma[\ybs]\) has at least two minimal symmetric systems when \(\ybs[0] \ge 3\) or \(N \ge 2\).
Since \(\sgCnmin[\ybs]\) is symmetric, it contains a minimal symmetric system \(\sgC\).
Let \(C = (1, \ybsp[N])\).
As we have seen in the proof of Theorem \ref{orgtarget3}, \(C\) must be in \(\sgC\).
Thus it suffices to find a symmetric system \(\sgD\) with \(C \not \in \sgD\).
Let 
\[
 \sgB = \sgCnmin[\ybs] \setminus \fS_2(C), \quad \ccD = (2, \ybsp[N] - 1), \ \tand \ \sgD = \sgB \cup \fS_2(\ccD).
\] 
Note that \(D \not \in \fS_2(C)\) since \(\ybs[0] \ge 3\) or \(N \ge 2\).\footnote{If \(\ybs[0] = 2\) and \(N = 1\), then \(\ccC = (1,2)\) and \(\ccD = (2,1)\), so \(D \in \fS_2(C)\).}
In particular, \(C \not \in \sgD\). We show that \(\sgD\) is a symmetric system.
As we have seen in the proof of Theorem \ref{orgtarget3}, \(\sgB\) adequately covers \(X\) with \(X_{\le N} \neq (\ybsp[N], \ybsp[N])\)
Hence we need only show the following two statements:
\begin{enumerate}
\item \(\nsigma[\ybs](X - \ccD) = \nsigma[\ybs](X - C)\) for every \(X\) with \(X_{\le N} = (\ybsp[N], \ybsp[N])\).
\item \(\ccD \in \sgCord[\ybs]\).\footnote{From (2), we see that \(\sgD\) satisfies (\hyperref[orgtarget32]{SG1}).}
\end{enumerate}
Let \(n = \nsigma[\ybs](X)\) and \(h = \nsigma[\ybs](X - C)\).
Note that \(h_{\le N} = \ybsp[N] - 1\) and \(h_{\ge N + 1} = n_{\ge N + 1}\) since \(X_{\le N} - C = (\ybsp[N], \ybsp[N]) - (1, \ybsp[N]) = (\ybsp[N] - 1, 0)\).

First, suppose that \(\ybs[0] \ge 3\). Then \(\ccD = (\bnum{2}, \bnum{\ominus 1, \ldots, \ominus 1}) \in \sgCord[\ybs]\).
Since
 \(X_{\le N} - \ccD = (\ybs[N] - 2, 1) = (\bnum{\ominus 2, \ominus 1, \ldots, \ominus 1}, \bnum{1})\),
it follows that \(\nsigma[\ybs](X - \ccD) = h\).

Next, suppose that \(\ybs[0] = 2\) and \(N \ge 2\). 
Then 
\[
 \ccD = (\bnum{0,1},\ [1, \underbrace{\ominus 1, \ldots, \ominus 1}_{N - 1}]) \in \sgCord[\ybs].
\]
Since \(X_{\le N} - \ccD = (\ybsp[N] - 2, 1) = (\bnum{0, \ominus 1, \ominus 1, \ldots, \ominus 1}, \bnum{1})\), we see that \(\nsigma[\ybs](X - \ccD) = h\).
Therefore \(\sgD\) is a symmetric system with \(C \not \in \sgD\).
 
\end{step}

 \begin{step}
 \comment{Step.}
\label{sec:orgheadline72}
It remains to show the assertion when \(\ybs = (2,\ybs[1],2,2,\ldots)\) with \(\ybs[1] \ge 4\).
Let
\[
C^{(0)} =\begin{bmatrix}
 1 & \ominus 2 \\
 0 & 1 \\
\end{bmatrix}\hspace{-0.3em},\
C^{(1)} =\begin{bmatrix}
 0 & 1 \\
 1 & \ominus 2 \\
\end{bmatrix}\hspace{-0.3em},\
\]%
\[
D^{(0)} =\begin{bmatrix}
 1 & 1 \\
 0 & \ominus 2 \\
\end{bmatrix}\hspace{-0.3em},\
D^{(1)} =\begin{bmatrix}
 0 & \ominus 2 \\
 1 & 1 \\
\end{bmatrix}\hspace{-0.3em},\
\ccE^{(0)} =\begin{bmatrix}
 1 & 0 \\
 0 & \ominus 2 \\
\end{bmatrix}\hspace{-0.3em},\
\ccE^{(1)} =\begin{bmatrix}
 0 & \ominus 2 \\
 1 & 0 \\
\end{bmatrix}\hspace{-0.3em}.
\]Note that \(C^{(i)} \neq D^{(i)}\) and \(E^{(i)} \in \sgCord[\ybs] \setminus \sgCnmin[\ybs]\) for each \(i \in \set{0, 1}\) since \(\ybs[1] \ge 4\).
Consider
\[
 \sgD = \sgCnmin[\ybs] \cup \Set{\ccD^{(0)}, \ccD^{(1)}, \ccE^{(0)}, \ccE^{(1)}} \setminus \Set{\ccC^{(0)}, \ccC^{(1)}}.
\]
It suffices to show the following two assertions:
\begin{enumerate}
\item Every Sprague-Grundy system of \(\nsigma[\ybs]\) contains \(\ccC^{(0)}\) or \(\ccE^{(1)}\).
\item \(\sgD\) is a symmetric Sprague-Grundy system of \(\nsigma[\ybs]\).
\end{enumerate}
By (1) and (2), we see that \(\sgCnmin[\ybs]\) and \(\sgD\) contain different minimal symmetric systems of \(\nsigma[\ybs]\).

We first show (1). Let \(X = \ccC^{(0)}\). 
Then \(\nsigma[\ybs](X) = \bnum{1, \ominus 1}\).
If \(Y\) is a descendant of \(X\) with \(\nsigma[\ybs](Y) = 0\), then
\(Y \in \set{(0, 0), (1, 1)}\).
Since \(X - (0, 0) = \ccC^{(0)}\) and \(X - (1, 1) = \ccE^{(1)}\), (1) holds.

We next verify (2). Since \(\sgD \subseteq \sgCord[\ybs]\), we see that \(\sgD\) satisfies (\hyperref[orgtarget32]{SG1}).
Let \(X \in \Nb^2, n = \nsigma[\ybs](X)\), and \(h \in \Nb\) with \(h < n\).
We show that \(\sgD\) covers \(X\) at \(h\).
Set \(\msim{N} = \max \set{L \in \NN : n_L \neq h_L}\).
By Lemma \ref{orgtarget42}, there exists \(C \in \sgCnmin[\ybs]\) satisfying (\hyperref[orgtarget46]{A1})-(\hyperref[orgtarget49]{A4}).
We may assume that \(C \in \set{C^{(0)}, C^{(1)}}\) since otherwise \(C \in \sgD\).
Suppose that \(C = C^{(0)}\).
Since \(N(C^{(0)}) = \set{1}\), it follows that \(\msim{N} = 1\) and \(n_1 \neq h_1\).
Because
\[
X_{\le 1} - C^{(0)} =\begin{bmatrix}
 x^0_0 & x^0_1 \\
 x^1_0 & x^1_1 \\
\end{bmatrix}
-\begin{bmatrix}
 1 & \ominus 2 \\
 0 & 1 \\
\end{bmatrix}=
\begin{bmatrix}
 x^0_0  \ominus 1 & x^0_1  \oplus  2 \ominus \delta(x^0_0) \\
 x^1_0 & x^1_1  \ominus 1 \\
\end{bmatrix},
\]we see that
\[
 h_{\le 1} = \nsigma[\ybs](X_{\le 1} - C^{(0)}) = 
 \bnum{x^0_0 \oplus x^1_0 \ominus 1, x^0_1 \oplus x^1_1 \oplus 1 \ominus \delta(x^0_0)}.
\]
Since \(h_1 \neq n_1\),  it follows that \(\delta(x^0_0) = 0\).
Consider
\[
D =\begin{bmatrix}
 0 & \ominus  2 \\
 1 & 1 \ominus  \delta(x^1_0) \\
\end{bmatrix}\in \set{\ccD^{(1)}, \ccE^{(1)}} (\subseteq \sgD).
\]Then
\[
X_{\le 1} - D  =\begin{bmatrix}
 x^0_0 & x^0_1  \oplus  2 \\
 x^1_0  \ominus 1 & x^1_1  \ominus 1 \\
\end{bmatrix}\in \Nb^2.
\]It follows that \(\nsigma[\ybs](X - D) = \nsigma[\ybs](X - C) = h\). This means that \(\sgD\) covers \(X\) at \(h\).
By symmetry, \(\sgD\) covers \(X\) at \(h\) even when \(C = C^{(1)}\).
Therefore \(\sgD\) is a symmetric Sprague-Grundy system of \(\nsigma[\ybs]\). 

\qed
 
\end{step} 

\section{Maximum systems}
\label{sec:orgheadline84}
\label{orgtarget31}
We present an inductive property of \(\sgF^{\ybs}\), and then show Theorem \ref{orgtarget4}.
Using this theorem, we prove Theorem \ref{orgtarget22}.

\subsection{Carry and inductive property of \texorpdfstring{$\sgF^{\ybs}$}{Fb}}
\label{sec:orgheadline78}
\label{orgtarget26}
Carry plays an important role in this section.
For \(n, h \in \Nb\),  define
\[
 r(n, h) = r^{\ybs}(n, h) = (n + h) \ominus (n \oplus h)
\]
and \(r_L(n, h) = (r(n, h))_L\).
Then
\[
 r_L(n, h) = \begin{cases}
 1 & \tif  \text{there is a carry in the \(L\)th digit in the calculation of\ } n + h \\
 & \text{\ \  in base\ } \ybs,\\
 0 & \totherwise.
 \end{cases}
\]
For example, if \(\ybs = (10,10, \ldots), n = 37\), and \(h = 65\), then \(r(n, h) = 102 \ominus 92 = 110\).
Note that 
\begin{equation}
\label{orgtarget65}
 \Chopped{n + h} = \Chopped{n} + \Chopped{h} + r_1(n, h),
\end{equation}
where $\Chopped{n} = n_{\ge 1}$.
In the above example, \(\Chopped{n + h} = \Chopped{102} = 10\) and \(\Chopped{n} + \Chopped{h} + r_1(n, h) = 3 + 6 + 1 = 10\).
For \(X, F \in \Nb^m\), let 
\[
 r(X, F) = (r(x^0, f^0), \ldots, r(x^{m - 1}, f^{m - 1}))
\]
and \(r_L(X, F) = (r_L(x^0, f^0), \ldots, r_L(x^{m - 1}, f^{m - 1}))\).
Observe that
\begin{equation}
\label{orgtarget66}
 \begin{split}
 \Carry(F) &= \set{r_1(X, F) : X \in \Nb^m} \\
 &= \set{r_1(x, F) : x \in \range{\ybs[0]}^m},
 \end{split}
\end{equation}
where \(\Carry(F)\) is defined in (\ref{orgtarget25}).
For example, if \(\ybs = (10,10, \ldots)\) and \(F = (37, 10)\), then
\(\Carry(F) = \set{0,1} \times \set{0}\).

The next result allows us to prove Theorem \ref{orgtarget4} by induction.

 \begin{lemma}
 \comment{Lem.}
\label{sec:orgheadline76}
\label{orgtarget67}
If \(F \in \cF^{\ybs}\), then \(\Chopped{F} + r \in \cF^{\Chopped{\ybs}}\) for some \(r \in \Carry(F)\).
 
\end{lemma}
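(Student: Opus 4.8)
The plan is to transport a witness through the ``chop'' map. By (\ref{orgtarget21}), the hypothesis \(F \in \cF^{\ybs} = \Nb^m \setminus \sgCmax[\ybs]\) means there is an \(X \in \Nb^m\) with \(\nsigma[\ybs](X + F) = \nsigma[\ybs](X)\); equivalently, \(\nsigma[\ybs]<L>(X + F) = \nsigma[\ybs]<L>(X)\) for every \(L \in \NN\). What must be produced is an \(r \in \Carry(F)\) together with an \(X' \in \Nb[\Chopped{\ybs}]^m\) such that \(\nsigma[\Chopped{\ybs}](X' + (\Chopped{F} + r)) = \nsigma[\Chopped{\ybs}](X')\), again by (\ref{orgtarget21}) but now at the base \(\Chopped{\ybs}\). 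The natural choice is \(r = r_1(X, F)\), which lies in \(\Carry(F)\) by (\ref{orgtarget66}), and \(X' = \Chopped{X} = (\Chopped{x^0}, \ldots, \Chopped{x^{m-1}})\).

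The argument rests on two elementary facts. First, chopping shifts digit indices down by one: for any \(Z \in \Nb^m\) and \(L \in \NN\), the \(L\)th digit of \(\Chopped{z^i}\) in base \(\Chopped{\ybs}\) is the \((L+1)\)th digit of \(z^i\) in base \(\ybs\), and since the \(L\)th entry of \(\Chopped{\ybs}\) equals \(\ybs[L + 1]\), the group operation on that digit coincides on both sides; hence \(\nsigma[\Chopped{\ybs}]<L>(\Chopped{Z}) = \nsigma[\ybs]<L + 1>(Z)\), so that \(\nsigma[\Chopped{\ybs}](\Chopped{Z})\) is determined by the digits of \(\nsigma[\ybs](Z)\) of index at least \(1\). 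Second, applying (\ref{orgtarget65}) componentwise to \(y^i = x^i + f^i\) gives \(\Chopped{y^i} = \Chopped{x^i} + \Chopped{f^i} + r_1(x^i, f^i)\) for each \(i \in \Omega\), that is, \(\Chopped{Y} = \Chopped{X} + (\Chopped{F} + r)\) with \(Y = X + F\) and \(r = r_1(X, F)\).

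Putting these together, from \(\nsigma[\ybs]<L>(Y) = \nsigma[\ybs]<L>(X)\) for every \(L\) — in particular for every \(L \ge 1\) — the first fact gives \(\nsigma[\Chopped{\ybs}](\Chopped{Y}) = \nsigma[\Chopped{\ybs}](\Chopped{X})\), and rewriting \(\Chopped{Y}\) by the second fact turns this into \(\nsigma[\Chopped{\ybs}](\Chopped{X} + (\Chopped{F} + r)) = \nsigma[\Chopped{\ybs}](\Chopped{X})\). By (\ref{orgtarget21}) at the base \(\Chopped{\ybs}\), this says \(\Chopped{F} + r \notin \sgCmax[\Chopped{\ybs}]\), that is, \(\Chopped{F} + r \in \cF^{\Chopped{\ybs}}\), as desired. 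There is no serious obstacle here; the only point requiring care is the index bookkeeping of the chop map — checking that it carries digit \(L+1\) to digit \(L\) compatibly with \(\oplus\) on both sides, that \(\nsigma[\ybs](X+F) = \nsigma[\ybs](X)\) is used digitwise so that only digits of index \(\ge 1\) are invoked, and that the carry \(r_1(X, F)\) employed is exactly the one produced by the chosen \(X\) and hence genuinely an element of \(\Carry(F)\).
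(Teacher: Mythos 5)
Your proof is correct and follows essentially the same route as the paper: pick a witness \(X\) with \(\nsigma[\ybs](X + F) = \nsigma[\ybs](X)\), pass to digits of index at least \(1\) to get \(\nsigma[\Chopped{\ybs}](\Chopped{X + F}) = \nsigma[\Chopped{\ybs}](\Chopped{X})\), and rewrite \(\Chopped{X + F}\) via (\ref{orgtarget65}) as \(\Chopped{X} + \Chopped{F} + r_1(X, F)\) with \(r_1(X, F) \in \Carry(F)\) by (\ref{orgtarget66}). The only difference is that you spell out the digit-shift bookkeeping that the paper leaves implicit.
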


\begin{proof}
 \comment{Proof.}
\label{sec:orgheadline77}
Since \(F \in \cF^{\ybs}\), it follows that \(\nsigma[\ybs](X + F) = \nsigma[\ybs](X)\) for some \(X \in \Nb^m\),
and hence that \(\nsigma[\Chopped{\ybs}](\Chopped{X + F}) = \nsigma[\Chopped{\ybs}](\Chopped{X})\).
By (\ref{orgtarget65}), \(\Chopped{X + F} = \Chopped{X} + \Chopped{F} + r_1(X, F)\), so
\[
 \nsigma[\Chopped{\ybs}](\Chopped{X}) = \nsigma[\Chopped{\ybs}](\Chopped{X + F}) = \nsigma[\Chopped{\ybs}](\Chopped{X} + \Chopped{F} + r_1(X, F)).
\]
We conclude that \(\Chopped{F} + r_1(X, F) \in \sgF^{\Chopped{\ybs}}\).

\end{proof}

\subsection{Proof of Theorem \texorpdfstring{\ref{orgtarget4}}{1.12}}
\label{sec:orgheadline79}
\label{orgtarget68}
We show (\ref{orgtarget27}) by induction on \(L\). 
Let \(\sgFD<L>\) be the left-hand side of (\ref{orgtarget27}).

First, suppose that \(L = 0\). In this case, \(\sgFD<0> = \sgFA<0>\).
Indeed, let \(F \in \sgFA<0>\). Then \(\nsigma[\ybs]<0>(F) = 0\) and \(\Chopped{F} = (0,\ldots,0)\), so 
\[
 \nsigma[\ybs](F + (0,\ldots,0)) = 0 = \nsigma[\ybs]((0,\ldots,0)).
\]
Hence \(F \in \sgFD<0>\). Conversely, let \(F \in \sgFD<0>\). Then \(\Chopped{F} = (0, \ldots, 0)\). Since \(\sgFD<0> \subseteq \sgF^{\ybs}\), there exists \(X \in \Nb^m\) with \(\nsigma[\ybs](X + F) = \nsigma[\ybs](X)\).
Because \(\nsigma[\ybs]<0>(X + F) = \nsigma[\ybs]<0>(X) \oplus \nsigma[\ybs]<0>(F)\), we see that \(\nsigma[\ybs]<0>(F) = 0\).
Hence \(F \in \sgFA<0>\). 
In particular, (\ref{orgtarget27}) holds when \(L = 0\).

Next, suppose that \(L \ge 1\).

We first verify that \(\sgFD<L> \subseteq \sgFA<L>\).
Let \(F \in \sgFD<L>\). 
Then \(\nsigma[\ybs]<0>(F) = 0\) since \(F \in \sgFD<L> \subseteq \sgF^{\ybs}\).
We prove that \(\Chopped{F} +\ccr \in \sgFA[\Chopped{\ybs}]<L - 1>\) for some \(\ccr \in \rho(F)\).
By Lemma \ref{orgtarget67}, \(\Chopped{F} + r \in \sgF^{\Chopped{\ybs}}\) for some \(r \in \rho(F)\).
To apply the induction hypothesis to \(\Chopped{F} + r\), 
we show that 
\begin{equation}
\label{orgtarget69}
 \max (\Chopped{F} + r) \le \ybsp[L][\Chopped{\ybs}] - \ybsp[L - 1][\Chopped{\ybs}].
\end{equation}
Note that \(f^i \le \ybsp[L + 1] - \ybsp[L]\) since \(F \in \sgFD<L>\).
If \(f^i < \ybsp[L + 1] - \ybsp[L] = \bnum{0, \ldots, 0, \ominus 1}\), then \(\Chopped{f}^i < \ybsp[L][\Chopped{\ybs}] - \ybsp[L - 1][\Chopped{\ybs}]\), so
\(\Chopped{f}^i + r^i \le \Chopped{f}^i + 1 \le \ybsp[L][\Chopped{\ybs}] - \ybsp[L - 1][\Chopped{\ybs}]\).
Suppose that \(f^i = \ybsp[L + 1] - \ybsp[L]\).
Since \(L \ge 1\), it follows that \(f^i_0 = 0\), and hence that \(r^i\) must be zero.
This implies that \(\Chopped{f}^i + r^i = \ybsp[L][\Chopped{\ybs}] - \ybsp[L - 1][\Chopped{\ybs}]\).
Therefore (\ref{orgtarget69}) holds, and hence \(\Chopped{F} + r \in \sgFD[\Chopped{\ybs}]<L - 1>\).
We can now apply the induction hypothesis to \(\Chopped{F} + r\), so \(\Chopped{F} + r \in \sgFA[\Chopped{\ybs}]<L - 1>\). 
Therefore \(F \in \sgFA<L>\).

We next show that \(\sgFA<L> \subseteq \sgF^{\ybs}\).
For \(F \in \sgFA<L>\), we construct \(X \in \Nb^m\) such that \(\nsigma[\ybs](X + F) = \nsigma[\ybs](X)\).
Since \(F \in \sgF_L^{\ybs}\), there exists \(r \in \Carry(F)\) such that \(\Chopped{F} + \ccr \in \sgFA[\Chopped{\ybs}]<L - 1>\).
By the induction hypothesis, \(\Chopped{F} + \ccr \in \sgF^{\Chopped{\ybs}}\), so
\(\nsigma[\Chopped{\ybs}](\Chopped{X} + \Chopped{F} + \ccr) = \nsigma[\Chopped{\ybs}](\Chopped{X})\) for some \(\Chopped{X} \in \Nb[\Chopped{\ybs}]^m\).
From (\ref{orgtarget66}), we can choose \(x \in \range{\ybs[0]}^m\) so that \(\ccr = r_1(x, F)\). Let \(X = x + \ybs[0] \Chopped{X} \in \Nb^m\).
We show that \(\nsigma[\ybs](X + F) = \nsigma[\ybs](X)\).
Since \(F \in \sgFA<L>\), we see that \(\nsigma[\ybs]<0>(F)= 0\). Hence
\[
 \nsigma[\ybs]<0>(X + F) =  \nsigma[\ybs]<0>(X) \oplus \nsigma[\ybs]<0>(F) = \nsigma[\ybs]<0>(X).
\]
Moreover, since \(r_1(X, F) = r_1(x, F) = r\), it follows that \(\Chopped{X + F} = \Chopped{X} + \Chopped{F} + \ccr\), and hence that
\begin{align*}
 (\nsigma[\ybs](X + F))_{\ge 1} &= \nsigma[\Chopped{\ybs}](\Chopped{X + F})  \\
 &= \nsigma[\Chopped{\ybs}](\Chopped{X} + \Chopped{F} + \ccr)  \\
 &= \nsigma[\Chopped{\ybs}](\Chopped{X}) = (\nsigma[\ybs](X))_{\ge 1}.
\end{align*}
Therefore \(\nsigma[\ybs](X + F) = \nsigma[\ybs](X)\).

\qed

\subsection{Proof of Theorem \texorpdfstring{\ref{orgtarget22}}{1.14}}
\label{sec:orgheadline83}
\label{orgtarget70}
(\(\Rightarrow\)). We first show that if \(\ybs \neq (\ybs[0], 2,\ldots)\), then
\(\sgCmax[\ybs] \supsetneq \sgCord[\ybs]\).
Let
\[
 N = \min \set{L \in \NN : L \ge 1,\ \ybs[L] \ge 3},
\]
and consider 
\[
 \ccC = (\bnums{\ccc^0_0, \underbrace{0, \ldots, 0}_N, 1}, \bnums{\ominus \ccc^0_0}, 0, \ldots, 0) = (\ccc^0_0 + \ybsp[N + 1], \ominus \ccc^0_0, 0, \ldots, 0) \in \Nb^m.
\]
It suffices to show that \(\ccC \in \sgCmax[\ybs]\) since
\(\ccC \not \in \sgCord[\ybs]\) if \(\ccc^0_0 \neq 0\).
Lemma \ref{orgtarget67} implies that \(\ccC \in \sgCmax[\ybs]\) if
\begin{equation}
\label{orgtarget71}
\Chopped{\ccC} + r  \in \sgCmax[\Chopped{\ybs}] \tforevery r \in \Carry(\ccC).
\end{equation}
We show (\ref{orgtarget71}) by induction on \(N\).
Let \(\ccr \in \Carry(\ccC)\). If \(\nsigma[\Chopped{\ybs}]<0>(\Chopped{\ccC} + \ccr) \neq 0\), then 
\(\Chopped{\ccC} + \ccr \in \sgCmax[\Chopped{\ybs}]\). Hence we may assume that \(\nsigma[\Chopped{\ybs}]<0>(\Chopped{\ccC} + \ccr) = 0\).

Suppose that \(N = 1\).
Then \(\ccC = (\bnum{\ccc^0_0, 0, 1}, \bnum{\ominus \ccc^0_0}, 0, \ldots, 0)\).
We see that \(\ccr = (0,\ldots,0)\) because \(\ccr \in \Carry(\ccC) \subseteq \set{0, 1} \times \set{0, 1} \times \set{0} \times \cdots \times \set{0}\) and \(\ybs[1] \ge 3\).
Hence \(\Chopped{\ccC} + \ccr = \Chopped{\ccC} = (\bnum{0,1}, 0, \ldots, 0)  \in \sgCmax[\Chopped{\ybs}]\).

Suppose that \(N > 1\).
Since \(\ybs[1] = 2\) and \(\nsigma[\ybs]<0>(\Chopped{\ccC} + \ccr) = 0\), we see that \(\Chopped{\ccC} + \ccr = (\epsilon + \Chopped{\ybs}^N,   \epsilon, 0, \ldots, 0)\) for some \(\epsilon \in \set{0, 1}\).
By the induction hypothesis, \(\Chopped{\ccC} + \ccr \in \sgCmax[\Chopped{\ybs}]\). Therefore \(\ccC \in \sgCmax[\ybs]\).

(\(\Leftarrow\)). We next show that 
if \(\ybs = (\ybs[0], 2,\ldots)\), then
\(\sgCmax[\ybs] = \sgCord[\ybs]\).
Since \(\sgCord[\ybs] \subseteq \sgCmax[\ybs]\),
it suffices to show that \(\Nb^m \setminus \sgCord[\ybs] \subseteq \Nb^m \setminus \sgCmax[\ybs] (= \sgF^{\ybs})\).
Set \(\sgG^{\ybs} = \Nb^m \setminus \sgCord[\ybs]\). Then
\[
 \sgG^{\ybs}  = \Set{\ccG \in \Nb^m : \ord_{\ybs}\left(\sum g^i\right) > \mord_{\ybs}(\ccG)} \cup \set{(0, \ldots, 0)}.
\]
Let \(\ccG \in \sgG^{\ybs}\).
If \(\ccG = (0,\ldots,0)\), then \(\ccG \in \sgFA\).
Suppose that \(\ccG \neq (0,\ldots, 0)\), and let
\[
 \ccM = \mord_{\ybs}(G).
\]
Note that \(\nsigma[\ybs]<0>(G) = 0\) since \(\ord_{\ybs}(\sum \ccg^i) > \ccM\).
We show that \(G \in \sgF^{\ybs}\) by induction on \(\max \ccG\).
If \(\max \ccG \le \ybs[0] - 1\), then \(\ccG  \in \sgFA<0> \subset \sgF^{\ybs}\).
Suppose that \(\max \ccG \ge \ybs[0]\).
We divide into two cases.

 \begin{mycase}[\(\ccM > 0\)]
 \comment{Case. [\(\ccM > 0\)]}
\label{sec:orgheadline80}
We show that \(\Chopped{\ccG} \in \sgF^{\Chopped{\ybs}}\).
Since \(\ccM > 0\), it follows that \((\ccg^0_0, \ldots, \ccg^{m - 1}_0) = (0,\ldots, 0)\), and hence that
\[
\ord_{\Chopped{\ybs}}\left(\sum \Chopped{\ccg}^i\right) =  \ord_{\ybs}\left(\sum \ccg^i\right) - 1 > \ccM - 1 = \mord_{\Chopped{\ybs}}(\Chopped{\ccG}).
\]
This implies that \(\Chopped{\ccG} \in \sgG^{\Chopped{\ybs}}\).
Since \(\max \Chopped{\ccG} < \max \ccG\), it follows from the induction hypothesis that \(\Chopped{\ccG} \in \sgF^{\Chopped{\ybs}}\). Hence \(\ccG \in \sgF^{\ybs}\) by Theorem \ref{orgtarget4}.
 
\end{mycase}

 \begin{mycase}[\(\ccM = 0\)]
 \comment{Case. [\(\ccM = 0\)]}
\label{sec:orgheadline81}
Since \((\ccg^0_0, \ldots, \ccg^{m - 1}_0) \neq (0,\ldots, 0)\) and \(\ord_{\ybs}\left(\sum \ccg^i\right) \ge 1\),
the number of \(i\) with \(g^i_0 \neq 0\) is at least 2.
Hence we can choose  \(\ccr \in \Carry(G)\) so that the number of \(i\) with \((\Chopped{g}^i + r^i)_0 \neq 0\) is even and greater than 0. 
Then \(\ord_{\ybs}(\sum \Chopped{\ccg}^i + \ccr^i) \ge 1 > 0 = \mord_{\ybs}(\Chopped{G} + \ccr)\),
and hence \(\Chopped{\ccG} + \ccr \in \sgG^{\Chopped{\ybs}}\).
To apply the induction hypothesis to \(\Chopped{\ccG} + r\), we show that \(\max (\Chopped{\ccG} + r) < \max \ccG\).
If \(\ccg^i < \ybs[0]\), then \(\Chopped{\ccg}^i + r^i = r^i < \ybs[0] \le \max \ccG\).
If \(\ccg^i \ge \ybs[0]\), then \(\Chopped{\ccg}^i + r^i < \ybs[0] \Chopped{\ccg}^i + g^i_0 = \ccg^i\).
Hence \(\max (\Chopped{\ccG} + r) < \max \ccG\).
It follows from the induction hypothesis that \(\Chopped{\ccG} + r \in \sgF^{\Chopped{\ybs}}\).
Therefore \(\ccG \in \sgF^{\ybs}\) by Theorem \ref{orgtarget4}.

\qed
 
\end{mycase}

\comment{bibliography}
\label{sec:orgheadline82}
\bibliographystyle{abbrv}

\end{document}